\newtheorem{defn}{Definition}
\newtheorem{lemma}{Lemma}
\newtheorem{prop}{Proposition}
\newtheorem{theo}{Theorem}
\newtheorem{example}{Example}
\newtheorem{claim}{Claim}
\newtheorem{theoA}{Theorem A}
\newtheorem{theoB}{Theorem B} 
\newtheorem{conj}{Conjecture}
\newcommand{\R}{\mathbb R}
\newcommand{\Z}{\mathbb Z}
\newcommand{\xn}{(x_n)_{n\in\mathbb{Z}}}
\newcommand{\N}{\mathbb{N}}
\newcommand{\Sig}{\hat{\Sigma}}
\begin{document}

\title[Continuity of Lyapunov exponents]{Continuity of Lyapunov exponents for non-uniformly fiber-bunched cocycles}

\author{Catalina Freijo}
\address{Departamento de Matem\'atica, Universidade Federal de Minas Gerais, Av. Ant\^onio Carlos 6627, 31270-901 Belo Horizonte
Minas Gerais, Brazil}
\email{catalinafreijo@gmail.com}

\author{Karina Marin}
\address{Departamento de Matem\'atica, Universidade Federal de Minas Gerais, Av. Ant\^onio Carlos 6627, 31270-901 Belo Horizonte
Minas Gerais, Brazil}
\email{kmarin@mat.ufmg.br}

\begin{abstract}
We provide conditions that imply the continuity of the Lyapunov exponents for non-uniformly fiber-bunched cocycles in $SL(2,\mathbb{R})$. The main theorem is an extension of the result of Backes, Brown and Butler and gives a partial answer to a conjecture of Marcelo Viana. 
\end{abstract}



\maketitle

\section{Introduction}

The notion of Lyapunov exponents goes back to the work of A. M. Lyapunov in the late 19th century about the stability theory for differential equations. It was extended to the field of ergodic theory by the results of Fustenberg-Kesten \cite{FK} and O\-se\-le\-dets \cite{O}. Lyapunov exponents also appear naturally in smooth dynamics through the concept of non-uniform hyperbolicity introduced by Pesin \cite{P}. 

The theory of Lyapunov exponents for linear cocycles grew into a very broad area and active field. In this work, we are concerned with the continuity of the Lyapunov exponents for linear cocycles in $SL(2,\mathbb{R})$. That is, we study how the Lyapunov exponents vary as functions of the cocycle.

Discontinuity of Lyapunov exponents is typical for continuous $SL(2,\mathbb{R})$-valued cocycles over an invertible base. This has been proved in Theorem C of \cite{B1} as a particular case of Ma\~n\'e-Bochi's Theorem. More precisely, in \cite{B1} was shown that the only $C^0$-continuity points of the Lyapunov exponents are the cocycles which are either uniformly hyperbolic or those with zero Lyapunov exponents.

Even though, dis\-con\-ti\-nu\-i\-ty is a common feature, there are some contexts where continuity has been established. Bocker and Viana \cite{BV} and Malheiro and Viana \cite{MV} proved continuity of Lyapunov exponents for random products of 2-dimensional matrices in the Bernoulli and in the Markov setting. In higher dimension, continuity of the Lyapunov exponents for i.i.d. random products of matrices has been announced by Avila, Eskin and Viana \cite{AEV}.

Still for 2-dimensional cocycles, Bocker and Viana \cite{BV} constructed an example of a locally constant cocycle with non-zero Lyapunov exponents that can be approximated in the H\"older topology by linear cocycles with zero Lyapunov exponents. Then, we cannot expect to have continuity of the Lyapunov exponents even if we consider higher regularity. Another counter-example in this setting has been constructed in \cite{Bu}.

A few years ago, Backes, Brown and Butler \cite{BBB} proved that the continuity of Lyapunov exponents holds when restricted to the realm of fiber-bunched H\"older cocycles over any hyperbolic system and for any ergodic probability measure with local product structure. The main feature that fiber-bunched cocycles exhibit is the existence of uniform invariant holonomies. In fact, the main theorem in \cite{BBB} establishes that for continuous cocycles that admit uniform stable and uniform unstable holonomies, denoted by $H^s$ and $H^u$ respectively, we have the following: $$\text{If }\, (\hat{A}_k, H^{s,k}, H^{u,k})\stackrel{C^0}{\longrightarrow}(\hat{A},H^s,H^u), \text{ then } \lambda_{+}(\hat{A}_k)\to \lambda_{+}(\hat{A}).$$  In particular, their theorem extends \cite{BV} and \cite{MV}. 

More recently, Viana and Yang \cite{VY} were able to prove the continuity of Lyapunov exponents in the $C^0$ topology for a subset of linear cocycles when the transformation in the base is a uniformly expanding map. In their statement, the cocycle has non-zero Lyapunov exponents and it is not uniformly hyperbolic. This means that the Ma\~n\'e-Bochi phenomenon cannot be generalized to the non-invertible setting. The main observation is that given a uniformly expanding map, we can consider its natural extension and the lift of the cocycle. This new cocycle always admits a uniform stable holonomy. Therefore, the theorems in \cite{VY} suggest that the hypotheses in \cite{BBB} can be relaxed: we may only need to ask for the existence of a single uniform holonomy. 

\begin{conj} [Conjecture 6.3, \cite{SV}] 
\begin{equation*}
\begin{aligned}
\text{If }\, (\hat{A}_k, H^{u,k})\stackrel{C^0}{\longrightarrow}&(\hat{A},H^u)\, \text{ or }\, (\hat{A}_k, H^{s,k})\stackrel{C^0}{\longrightarrow}(\hat{A},H^s),\\
&\text{ then } \lambda_{+}(\hat{A}_k)\to \lambda_{+}(\hat{A}).
\end{aligned}
\end{equation*}
\end{conj}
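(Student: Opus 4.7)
The statement splits naturally into upper and lower semicontinuity. Upper semicontinuity $\limsup_k \lambda_+(\hat{A}_k)\le\lambda_+(\hat{A})$ is automatic from the $C^0$ convergence of the cocycles alone: by Kingman, $\lambda_+(\hat{A})=\inf_n\tfrac{1}{n}\int\log\|\hat{A}^{(n)}\|\,d\mu$, each $\hat{A}\mapsto\tfrac{1}{n}\int\log\|\hat{A}^{(n)}\|\,d\mu$ is $C^0$-continuous (the integrand is uniformly bounded for $SL(2,\R)$-valued cocycles), and an infimum of continuous functionals is upper semicontinuous. No use of holonomies enters here. All of the substantive work, and the whole reason for assuming both the $C^0$ convergence of one holonomy and the non-uniform fiber bunching of the limit, goes into the reverse inequality $\liminf_k \lambda_+(\hat{A}_k)\ge \lambda_+(\hat{A})$.

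For the lower bound, suppose $(\hat{A}_k,H^{s,k})\to(\hat{A},H^s)$; the unstable case is symmetric. I would lift to the projective bundle $M\times\mathbb{RP}^1$ and work with \emph{$s$-states}: $\hat{A}_k$-invariant probabilities $m_k$ projecting to $\mu$ whose disintegrations along the stable leaves of $f$ are $H^{s,k}$-equivariant. For each $k$, choose $m_k$ so that it realizes the top exponent,
\[
\Phi(\hat{A}_k,m_k):=\int\log\frac{\|\hat{A}_k(x)v\|}{\|v\|}\,dm_k(x,[v])=\lambda_+(\hat{A}_k);
\]
when $\hat{A}_k$ is hyperbolic take $m_k(x)=\delta_{E^u_k(x)}$, which is $H^{s,k}$-equivariant because $H^{s,k}$ preserves the unstable Oseledets direction, and when the exponents of $\hat{A}_k$ vanish both sides are zero and any $s$-state works. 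By Prokhorov, extract a subsequential weak-$*$ limit $m_k\rightharpoonup m$. The joint $C^0$ convergence of the cocycles and the holonomies forces $m$ to be $\hat{A}$-invariant and an $s$-state for $(\hat{A},H^s)$, and since the integrand is bounded and jointly continuous, $\Phi(\hat{A}_k,m_k)\to\Phi(\hat{A},m)$. The lower bound thus reduces to the identification $\Phi(\hat{A},m)\ge \lambda_+(\hat{A})$.

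This last step is where the proof truly diverges from \cite{BBB} and is the main obstacle I anticipate. In the fully fiber-bunched setting both holonomies of the approximants converge; one invokes the Avila--Viana invariance principle to conclude that a measure simultaneously an $s$- and a $u$-state must disintegrate as $\delta_{E^u(x)}$, giving $\Phi=\lambda_+$ for free. Here only $H^s$ is supplied for the approximants, and the compensating input is the non-uniform fiber bunching of $\hat{A}$: its unstable holonomy $H^u$ exists on a full $\mu$-measure set, though not uniformly and without any analogue for the approximants. The plan is a non-uniform analogue of the Avila--Viana argument---combining the $H^s$-equivariance of $m$ built into the $s$-state condition with the $\mu$-a.e.\ $H^u$ of $\hat{A}$, restricting to Pesin-type blocks on which both holonomies enjoy uniform H\"older estimates, and upgrading $m$ on those blocks to an effectively bi-invariant object---so as to force $m_x=\delta_{E^u(x)}$ $\mu$-a.e., with a separate and easier argument dispatching the triangularizable case. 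Granting this, $\Phi(\hat{A},m)=\int\log\|\hat{A}(x)|_{E^u(x)}\|\,d\mu(x)=\lambda_+(\hat{A})$, closing the loop. Executing this measurable Pesin-block version of the invariance principle, on a limit whose holonomies are only a.e.\ defined and approximants whose other holonomy need not exist at all, is the technical heart of the proof and the step I expect will be by far the most delicate.
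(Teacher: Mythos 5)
You are attempting to prove the Conjecture itself, but the paper does not prove this statement: it is Conjecture 6.3 of \cite{SV} and remains open. The paper only establishes a partial answer (Theorem A), which requires $\hat{A}$ to be H\"older and non-uniformly fiber-bunched and, crucially, requires $\hat{A}_k\to\hat{A}$ in the \emph{H\"older} topology, not merely $C^0$. Your proposal already assumes non-uniform fiber bunching of the limit, so at best it is a sketch of Theorem A rather than of the conjecture; and even then the $C^0$ hypothesis is too weak for the strategy you describe, because the non-uniform holonomies of the approximants $\hat{A}_k$ exist and converge to those of $\hat{A}$ only when the H\"older (Lipschitz) constants of $\hat{A}_k$ stay bounded (Proposition \ref{existdebil} and Proposition \ref{holonomia}); the paper states explicitly that in the $C^0$ topology non-uniform fiber bunching does not yield non-uniform holonomies.

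Beyond this, there are two concrete gaps in the lower-bound argument. First, your choice of $m_k$ is not an $s$-state: the stable holonomy $H^{s,k}$ preserves the \emph{stable} Oseledets direction (characterized by forward contraction), not the unstable one, so $\int\delta_{\mathbb{P}(E^{u,k}_{\hat{x}})}\,d\hat{\mu}$ is a $u$-state, exactly as in Equation (\ref{msu}) and Proposition \ref{discont}. Since only $H^{s,k}$ is given uniformly, the relevant equivariance of $m_k$ involves the merely non-uniform unstable holonomies of $\hat{A}_k$, and passing that property to the weak-$^*$ limit is precisely the content of Proposition \ref{utosdebil}, which is where the H\"older convergence and the holonomy blocks enter; your sketch assumes this transfer ``is forced'' by $C^0$ convergence. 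Second, the step you defer --- identifying $\Phi(\hat{A},m)$ with $\lambda_+(\hat{A})$ by forcing $m_{\hat{x}}=\delta_{\mathbb{P}(E^{u}_{\hat{x}})}$ a.e. --- cannot work as stated: at a putative discontinuity point the limit is a convex combination $\hat{m}=\alpha\hat{m}^u+(1-\alpha)\hat{m}^s$ with $\alpha\neq1$ (Proposition \ref{convex} and Proposition \ref{discont}), so no invariance principle can push it onto $E^u$; moreover Theorem \ref{invarianza} applies only when the exponents of the limit vanish. The paper's actual route is different and much longer: it shows every invariant measure is an $su$-state, then splits according to whether the measures $\hat{m}^k$ realizing $\lambda_+(\hat{A}_k)$ are themselves $su$-states (handled via Kalinin's hyperbolic periodic point, the continuous disintegrations of Proposition \ref{yanise}, and invariant sections) or not (handled via the induced measures of Section 7, non-atomicity as in Proposition \ref{nonatomic}, and the energy method of \cite{BBB}). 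None of this machinery appears in your plan, and the single step you leave as ``the technical heart'' is where essentially all of the difficulty of the result lives.
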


The results in the present work give a partial answer to this conjecture. More precisely, we prove that the conjecture is true if the cocycle $\hat{A}$ is H\"older continuous and non-uniformly fiber-bunched. This notion was introduced in \cite{V} and implies the existence of holonomies with weaker properties than the uniform ones. 

In the following theorem the map in the base is a hyperbolic homeomorphism and $\hat{\mu}$ is an ergodic $\hat{f}$-invariant probability measure with local product structure and fully supported. We refer the reader to next section for the precise definitions.

\begin{theoA} Let $\hat{A}$ be a H\"older $SL(2,\mathbb{R})$-valued linear cocycle such that $\hat{A}$ is non-uniformly fiber-bunched and admits a uniform stable holonomy. Consider a sequence $(\hat{A}_k, H^{s,k})$ such that $\hat{A}_k\to \hat{A}$ in the H\"older topology and $H^{s,k}\to H^s$ in the $C^0$ topology. Then, $\lambda_{+}(\hat{A}_k)\to \lambda_{+}(\hat{A})$. 
\end{theoA}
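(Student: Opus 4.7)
The plan is to adapt the Backes--Brown--Butler strategy from \cite{BBB}, replacing the role of the uniform unstable holonomies by the measurable unstable holonomies supplied by the non-uniform fiber-bunching hypothesis on $\hat{A}$. Upper semicontinuity of $\lambda_+$ with respect to $C^0$-convergence of cocycles is classical (subadditivity plus Fatou), so the case $\lambda_+(\hat{A})=0$ is immediate and I may assume $\lambda_+(\hat{A})>0$. Arguing by contradiction, extract a subsequence $k_j$ with $\lambda_+(\hat{A}_{k_j})\to\lambda<\lambda_+(\hat{A})$.

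For each sufficiently large $k$, the Oseledets theorem yields a measurable section $E^s_k(\hat{x})\in\mathbb{RP}^1$ associated to the negative exponent $-\lambda_+(\hat{A}_k)$ of $\hat{A}_k$, and this section is invariant under the uniform stable holonomy $H^{s,k}$. Let $\hat{\eta}_k=\int \delta_{E^s_k(\hat{x})}\,d\hat{\mu}(\hat{x})$ be the corresponding graph measure on the projective bundle. It projects onto $\hat{\mu}$, is invariant under the projectivization of $\hat{A}_k$, and is $H^{s,k}$-invariant. Passing to a further subsequence, $\hat{\eta}_{k_j}$ converges in the weak-$*$ sense to a probability $\hat{\eta}$. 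Using the convergences $\hat{A}_{k_j}\to\hat{A}$ in Hölder norm and $H^{s,k_j}\to H^s$ in $C^0$, the limit $\hat{\eta}$ projects onto $\hat{\mu}$, is invariant under the projectivization of $\hat{A}$, and is $H^s$-invariant, exactly as in the corresponding step of \cite{BBB}.

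The crucial step is then to identify $\hat{\eta}$ as the graph measure of the stable Oseledets direction $E^s$ of $\hat{A}$. Since $\hat{A}$ is non-uniformly fiber-bunched in the sense of \cite{V}, there exist measurable unstable holonomies $H^u$ defined $\hat{\mu}$-a.e., Hölder along local unstable plaques, with sub-exponential constants. The plan is to run an invariance-principle dichotomy for $\hat{\eta}$ under the combined action of the uniform $H^s$ and the measurable $H^u$. Disintegrating $\hat{\eta}$ along local unstable sets via the local product structure of $\hat{\mu}$, and combining $s$-invariance with a.e. $u$-invariance in an Avila--Viana style martingale argument, one aims to obtain that either the disintegration is non-atomic on a set of positive measure---which would force $\lambda_+(\hat{A})=0$ by the non-uniform invariance principle and contradict our standing assumption---or $\hat{\eta}$ is a graph measure over a measurable $\hat{A}$-invariant section, which by cocycle invariance and uniqueness must coincide $\hat{\mu}$-a.e.~with $E^s$.

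Granting this identification, weak-$*$ convergence combined with the uniform convergence of the continuous function $(\hat{x},E)\mapsto\log\|\hat{A}_k(\hat{x})|_E\|$ on the projective bundle yields
\begin{equation*}
-\lambda\;=\;\lim_{j\to\infty}\int\log\|\hat{A}_{k_j}|_E\|\,d\hat{\eta}_{k_j}\;=\;\int\log\|\hat{A}|_E\|\,d\hat{\eta}\;=\;\int\log\|\hat{A}|_{E^s(\hat{x})}\|\,d\hat{\mu}(\hat{x})\;=\;-\lambda_+(\hat{A}),
\end{equation*}
the desired contradiction. The principal obstacle I expect is the third step: the Avila--Viana dichotomy originally uses continuous unstable holonomies, whereas here $H^u$ is only defined almost everywhere and carries only sub-exponential rather than uniform control. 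Proving that the disintegration of $\hat{\eta}$ along unstable plaques is genuinely $H^u$-invariant in this non-uniform regime, and that the resulting measurable section has enough regularity to be identified with $E^s$, is the technical heart of the argument, and is precisely where the quantitative content of non-uniform fiber-bunching must be exploited.
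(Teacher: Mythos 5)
Your setup is sound and parallels the first half of the paper's argument (Proposition \ref{discont}): arguing by contradiction, taking graph measures over Oseledets directions, passing to a weak-$^*$ limit, and using that invariance under a uniform stable holonomy survives $C^0$-convergence. (Two small repairs are needed even there: along the bad subsequence you may have $\lambda_{+}(\hat{A}_{k_j})=0$, in which case $E^s_{k_j}$ does not exist and you must replace $\hat{\eta}_{k_j}$ by an arbitrary invariant measure, which is then automatically an $su$-state by the invariance principle.) The genuine gap is the step you yourself flag as the "technical heart": identifying $\hat{\eta}$ with the graph measure of $E^s$, and the dichotomy you propose for it does not work. First, by Proposition \ref{convex} any invariant measure projecting to $\hat{\mu}$ is already $\alpha\hat{m}^u+(1-\alpha)\hat{m}^s$ when $\lambda_{+}(\hat{A})>0$, so its fiber disintegration is automatically atomic; the implication "non-atomic disintegration $\Rightarrow\lambda_{+}=0$" is not available, and for disintegrations along unstable plaques non-atomicity does not force zero exponents -- that is precisely the regime the energy method of \cite{AEV}/\cite{BBB} is designed to handle, not a contradiction you get for free. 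Second, "uniqueness" of the invariant section is false: $E^u$ is also an invariant, holonomy-compatible section, and the entire difficulty is to exclude the limit picking up mass on $\hat{m}^u$ (equivalently, to rule out $\alpha>0$, which by your own $s$-invariance of $\hat{\eta}$ would only tell you that $\hat{m}^u$ is an $su$-state -- not yet a contradiction). Third, disintegrating $\hat{\eta}$ along unstable plaques in a usable way requires the cocycle to be reduced to one constant along the plaques; the paper does this along \emph{stable} sets via the uniform stable holonomy (Equation (\ref{b2})), projecting to $\Sigma^u$, and this cannot be mirrored along unstable sets when the unstable holonomy is only measurable.

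What is missing relative to the paper is the entire second half: a case split on whether the extremal measures are $su$-states. When they are, one needs the continuous disintegration of $su$-states (Proposition \ref{yanise}, which combines the uniform stable holonomy, the non-uniform unstable holonomy on holonomy blocks, and the local product structure of $\hat{\mu}$) together with a periodic point with hyperbolic return map (Kalinin) to propagate the eigendirections into continuous invariant sections and force $\lambda_{+}(\hat{A}_k)\to\lambda_{\pm}(\hat{A})$. When they are not, one must prove non-atomicity of the induced measures $m^k_x$ on $\Sigma^u\times\mathbb{P}^1$ and run the energy/Margulis-function argument of \cite{BBB}, for which the equicontinuity and uniform convergence of the disintegrations (Propositions \ref{47} and \ref{uniform}) are the essential inputs supplied by the non-uniform holonomies. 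None of these steps is present, and the replacement you sketch for them rests on the two incorrect implications above.
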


It is possible to deduce an analogous result for cocycles admitting only a uniform unstable holonomy applying Theorem A to $(\hat{f}^{-1}, \hat{A}^{-1})$. 

In particular, the theorem above shows that the Lyapunov exponents of non-uniformly fiber-bunched H\"older cocycles over a uniformly expanding map vary continuously with the cocycle. 

Although, the non-uniform fiber-bunching condition in the hypotheses of Theorem A implies the existence of some kind of unstable holonomy for $\hat{A}$, this new type of holonomy does not share the uniform properties needed to apply the argument in \cite{BBB}. Several results need to be extended to our context in order to conclude the theorem. 
 
Observe that compared with the conjecture, Theorem A asks for higher regularity in the cocycle. This is the case, because in the $C^0$ topology, non-uniform fiber-bunching does not implies the existence of non-uniform holonomies. Moreover, if the sequence $\{\hat{A}_k\}$ converges to $\hat{A}$ in the H\"older topology, then the non-uniform holonomies exist for every $\hat{A}_k$ and they are continuous with the cocycle. This is a key property in the proof of the theorem. More precisely, it is enough to have that $\{\hat{A}_k\}$ converges to $\hat{A}$ in the $C^0$ topology and there exists $C>0$ such that the H\"older constant of $\hat{A}_k$, $H(\hat{A}_k)$, verifies $H(\hat{A}_k)<C$ for every $k\in \mathbb{N}$. 

We remark that the existence of a uniform stable holonomy in Theorem A cannot be removed. In fact, the example of dis\-con\-ti\-nu\-i\-ty in \cite{BV} can be taken to be non-uniformly fiber-bunched. Therefore, we cannot expect continuity of the Lyapunov exponents to hold in the space of non-uniformly fiber-bunched cocycles without some extra hypotheses. 

In the next result, which is a consequence of the tools developed to prove Theorem A, we analyze the particular case of locally constant cocycles. Let $\hat{f}$ be the left-shift map and $\hat{\mu}$ be a fully supported Bernoulli measure. 

\begin{theoB} Let $\hat{A}$ be a H\"older $SL(2,\mathbb{R})$-valued linear cocycle such that $\hat{A}$ is non-uniformly fiber-bunched, locally constant and irreducible. If $\hat{A}_k\to \hat{A}$ in the H\"older topology, then $\lambda_{+}(\hat{A}_k)\to \lambda_{+}(\hat{A})$. 
\end{theoB}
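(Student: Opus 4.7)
The plan is to combine the technical machinery developed for Theorem~A with the classical Furstenberg uniqueness theorem for invariant measures of irreducible random products. Upper semicontinuity $\limsup_k\lambda_+(\hat{A}_k) \le \lambda_+(\hat{A})$ is automatic since $\lambda_+$ is an infimum of continuous functionals, so only the reverse inequality needs work. I first rule out the degenerate case: for an irreducible locally constant $SL(2,\mathbb{R})$-cocycle, Furstenberg's theorem forces $\lambda_+(\hat{A}) = 0$ only if the distribution of $\hat{A}(x_0)$ sits inside a compact subgroup up to conjugacy, and this is incompatible with non-uniform fiber-bunching. Therefore $\lambda_+(\hat{A}) > 0$.

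Assume by contradiction that some subsequence satisfies $\lambda_+(\hat{A}_k) \to \lambda < \lambda_+(\hat{A})$. Because non-uniform fiber-bunching is stable under Hölder perturbations, each $\hat{A}_k$ for $k$ large admits non-uniform stable and unstable holonomies. For each such $k$, choose an Oseledets $m$-state $\nu_k$ on $\hat{\Sigma}\times \mathbb{P}^1$ realizing $\lambda_+(\hat{A}_k)$; a core fact from the body of the paper is that $\nu_k$ is a \emph{non-uniform $u$-state}, i.e.\ its disintegration is invariant, on a full-measure set, under the non-uniform unstable holonomy of $\hat{A}_k$. By weak-$\ast$ compactness of probabilities on the projective bundle, extract a limit $\nu$ along a further subsequence; since $\hat{A}_k \to \hat{A}$ uniformly, $\nu$ is $\hat{A}$-invariant and projects to $\hat{\mu}$.

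The crucial input from the proof of Theorem~A is the $C^0$-continuity of non-uniform unstable holonomies with respect to the Hölder convergence of the cocycle. This passes the $u$-state property to the limit, so $\nu$ is a non-uniform $u$-state for $\hat{A}$. Since $\hat{A}$ is locally constant, its unstable holonomy along unstable sets of $\hat{f}$ reduces to the identity. Consequently $\nu_x$ depends only on the past $(x_n)_{n<0}$, and $\nu$ is the coupling of $\hat{\mu}$ with a measure-valued function of the past arising as a conditional of a stationary measure on $\mathbb{P}^1$ for the random product with kernel $\hat{A}(x_0)$. Furstenberg's uniqueness theorem (irreducibility together with $\lambda_+(\hat{A}) > 0$ produces a unique such stationary measure) then identifies $\nu$ with the top Oseledets $m$-state of $\hat{A}$.

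Applying the Ledrappier formula and passing to the limit, using the uniform convergence $\hat{A}_k \to \hat{A}$, gives
$$\lambda = \lim_{k}\int \log\|\hat{A}_k(x) v\|\, d\nu_k(x,v) = \int \log\|\hat{A}(x) v\|\, d\nu(x,v) = \lambda_+(\hat{A}),$$
contradicting $\lambda < \lambda_+(\hat{A})$. The main obstacle is the identification of $\nu$ as a non-uniform $u$-state for $\hat{A}$: non-uniform holonomies are only defined on sets of full measure, and their continuity under Hölder perturbation must be strong enough to survive the weak-$\ast$ limit. This delicate point is exactly what the machinery developed for Theorem~A provides, and coupled with Furstenberg uniqueness it replaces the \emph{uniform} stable holonomy hypothesis that Theorem~A imposes on the approximating sequence.
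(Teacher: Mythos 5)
Your route is genuinely different from the paper's. The paper's proof never invokes Furstenberg's theory: assuming $\hat{A}$ is a discontinuity point, it applies Proposition \ref{discont} (every $\mathbb{P}(\hat{A})$-invariant measure projecting to $\hat{\mu}$ is an $su$-state) together with Proposition \ref{yanise} to get a \emph{continuous} disintegration of $\hat{m}^u$ invariant under both holonomies; since both holonomies of a locally constant cocycle are the identity and $\hat{M}$ is a full shift, the section $\hat{x}\mapsto \operatorname{supp}\hat{m}^u_{\hat{x}}$ is constant, producing an invariant line and contradicting irreducibility. Your argument instead uses only the $u$-state side: closedness of $u$-states (Proposition \ref{utosdebil}), the identification of $u$-states of a locally constant cocycle with stationary measures, and uniqueness of the stationary measure. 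Both proofs lean on the same underlying machinery (Proposition \ref{discont} is itself proved via Proposition \ref{utosdebil}), but the paper's version avoids any appeal to Furstenberg and is shorter. One caveat on your uniqueness step: the classical Furstenberg uniqueness theorem assumes \emph{strong} irreducibility; with only irreducibility as defined here you should instead argue via Proposition \ref{convex} that a $u$-state of the form $\alpha\hat{m}^u+(1-\alpha)\hat{m}^s$ with $\alpha\neq 1$ would force $E^{s}_{\hat{x}}$ to depend on the past as well as the future, hence to be constant, contradicting irreducibility --- or, more simply, use that for irreducible cocycles $\int\Phi_{\hat{A}}\,d\eta=\lambda_+(\hat{A})$ for \emph{every} stationary $\eta$, which makes uniqueness unnecessary.

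There is one genuine error: your claim that $\lambda_+(\hat{A})=0$ ``is incompatible with non-uniform fiber-bunching'' reads the definition backwards. Non-uniform fiber-bunching is the \emph{upper} bound $2\lambda_+(\hat{A})<\alpha\log\sigma$, so a cocycle with $\lambda_+(\hat{A})=0$ is automatically non-uniformly fiber-bunched; the cocycle $\hat{B}$ of Example 1 is locally constant, irreducible, non-uniformly fiber-bunched and has $\lambda_+(\hat{B})=0$, so the hypotheses of Theorem B do not force positivity of the exponent. The gap is harmless but must be closed differently: if $\lambda_+(\hat{A})=0$, then upper semicontinuity together with $\lambda_+\geq 0$ (valid for all $SL(2,\mathbb{R})$-cocycles) gives $\lambda_+(\hat{A}_k)\to 0$ directly; and inside your contradiction argument the assumption $\lambda=\lim_k\lambda_+(\hat{A}_k)<\lambda_+(\hat{A})$ with $\lambda\geq 0$ already forces $\lambda_+(\hat{A})>0$, which is all the Furstenberg step requires.
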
 

We refer the reader to the next section for the precise definitions, but we remark that irreducibility is a $C^{\alpha}$ dense condition among locally constant cocycles, see for example Theorem 7.12 of \cite{V}. This shows that the example of discontinuity in \cite{BV} is not typical among non-uniformly fiber-bunched cocycles. 

\section{Preliminaries and statements}

In this section we provide the necessary definitions to give the precise statements of the theorems. Without loss of generality, we consider the map in the base as being a sub-shift of finite type.

Let $Q=(q_{i,j})_{1\leq i,j \leq d}$ be a matrix with $q_{i,j}\in \{ 0,1\}$. The sub-shift of finite type $\hat{\Sigma}$ associated to the matrix $Q$ is the subset of the bi-infinite sequences $\{1,... ,d \}^{\Z}$ satisfying $$\Sig=\{\xn \colon q_{x_nx_{n+1}}=1 \text{ for every } n\in\Z\}.$$ We require that each row and column of $Q$ contains at least one non-zero entry. 

For any $\rho\in(0,1)$, we define a metric $d_\rho$ in $\Sig$ by $$d_\rho(\hat{x},\hat{y})=\rho^{N(\hat{x},\hat{y})},$$ where $N(\hat{x},\hat{y})=\max\{N\geq 0; x_n=y_n \text{ for every } |n|<N\}$. Since the topologies given by the different distances $d_{\rho}$ are equivalent, from now on we consider $\rho$ fixed.

Let $\hat{f}\colon\Sig\to\Sig$ be the left-shift map defined by $\hat{f}\xn=(x_{n+1})_{n\in\Z}$. The map $\hat{f}$ is a hyperbolic homeomorphism such that for every $\xn\in\Sig$ the local stable and unstable sets are given by
\begin{equation*}
\begin{aligned}
W^s_{loc}(\hat{x})=&\{(y_n)_{n\in\Z}\in\Sig: \ y_n=x_n \text{ with } n\geq 0\},\\ 
W^u_{loc}(\hat{x})=&\{(y_n)_{n\in\Z}\in\Sig: \ y_n=x_n \text{ with } n\leq 0\}.
\end{aligned}
\end{equation*}

Let $\sigma=1/\rho$ where $\rho$ is the constant in the definition of the distance. Observe that $\sigma$ is the expansion rate of $\hat{f}$. 

It is possible to express $\Sig$ locally as a product space if we consider the positive and the negative coordinates separately. Define
\begin{equation*}
\begin{aligned}
\Sigma^u=&\{(x_n)_{n\geq0}: q_{x_nx_{n+1}}=1 \text{ for every } n\geq 0\}\\
\Sigma^s=&\{(x_n)_{n\leq0}: q_{x_nx_{n+1}}=1 \text{ for every } n\leq -1\}.
\end{aligned}
\end{equation*}

We denote by $P^{u}\colon\Sig\to\Sigma^{u}$ and $P^{s}\colon\Sig\to\Sigma^{s}$ the projections obtained by dropping the negative and the positive coordinates, respectively, of a sequence in $\Sig$. 

For each $i\in\{1, ..., d\}$, define $[0;i]=\{\hat{x}\in\hat{\Sigma} : x_0=i\}$.

\begin{defn} An $\hat{f}$-invariant measure $\hat{\mu}$ has local product structure if there exists a continuous function $\psi\colon \Sig\to(0,\infty)$ such that for each $i\in \{1,...,d\}$, $$d\hat{\mu}\vert_{[0;i]}=\psi\cdot\left( d\mu^s\vert_{P^s([0;i])}\times d\mu^u\vert_{P^u([0;i])}\right),$$ where $\mu^s=P^s_{*}\hat{\mu}$ and $\mu^u=P^u_{*}\hat{\mu}$.
\end{defn}

For every $x\in \Sigma^u$, define $W^s_{loc}(x)=(P^u)^{-1}(x)$. Then, by Rokhlin \cite{R}, there exists a disintegration of $\hat{\mu}$ into conditional probabilities $\{\hat{\mu}_{x}\}_{x\in\Sigma^u}$ such that each $\hat{\mu}_x$ is supported on $W^s_{loc}(x).$ Observe that $d\hat{\mu}_x=\psi d\mu^s$. 

Given $x,y\in\Sigma^u\cap[0;i]$, the \emph{unstable holonomy map} $$h_{x,y}\colon W_{loc}^s(x)\to W^s_{loc}(y),$$ is defined by assigning to each $\hat{x}\in W^s_{loc}(x)$ the unique element $\hat{y}=h_{x,y}(\hat{x})\in W^s_{loc}(y)$ such that $\hat{y}\in W^u_{loc}(\hat{x})$. 

The lemma below is a well known consequence of the local product structure of the measure, see for instance \cite{BV04}. 

\begin{lemma}\label{lpe} Assume $\hat{\mu}$ has local product structure. Then, the measure $\hat{\mu}$ has a disintegration into conditional measures $\{\hat{\mu}_x\}_{x\in\Sigma^u}$ that vary continuously with $x$ in the weak-$^*$ topology. In fact, for every $x,y\in\Sigma^u$ in the same cylinder $[0;i]$, $$h_{x,y}:(W_{loc}^s(x),\hat{\mu}_x)\to(W^s_{loc}(y),\hat{\mu}_y)$$ is absolutely continuous, with Jacobian $R_{x,y}$ depending continuously on $(x,y)$.\end{lemma}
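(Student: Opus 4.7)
The plan is to construct the disintegration \emph{explicitly} from the local product formula, rather than invoking Rokhlin abstractly and trying to verify its properties afterwards. Fix a cylinder $[0;i]$ and use the bijection $\hat{x} \leftrightarrow (P^s(\hat{x}), P^u(\hat{x}))$ to identify $[0;i]$ with $P^s([0;i]) \times P^u([0;i])$. For each $x \in \Sigma^u$ with $x_0=i$, define a probability measure on $W^s_{loc}(x)=(P^u)^{-1}(x)$ by
\[
\hat{\mu}_x \;=\; \frac{1}{Z_i(x)}\,\psi(\,\cdot\,,x)\,\mu^s\big|_{P^s([0;i])},\qquad Z_i(x)=\int_{P^s([0;i])}\psi(\xi,x)\,d\mu^s(\xi),
\]
where I use the identification $W^s_{loc}(x)\cong P^s([0;i])$, $\xi\mapsto(\xi,x)$. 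Since $\psi$ is continuous and strictly positive on the compact set $\Sig$, $Z_i$ is a strictly positive continuous function on $\Sigma^u\cap P^u([0;i])$.

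Next I would verify that $\{\hat{\mu}_x\}$ is the (essentially unique) Rokhlin disintegration. A direct Fubini computation using the local product formula shows that, for every bounded measurable $f$ supported in $[0;i]$,
\[
\int_{[0;i]} f\, d\hat{\mu} \;=\; \int_{P^u([0;i])}\!\Big(\!\int f\, d\hat{\mu}_x\Big)\,Z_i(x)\,d\mu^u(x),
\]
and the finer observation is that $Z_i(x)\,d\mu^u(x)$ coincides with $d\mu^u(x)$ on $P^u([0;i])$ (this follows from taking $f\equiv 1$ and using $\mu^u=P^u_\ast\hat{\mu}$, which forces $Z_i\equiv 1$ almost everywhere; alternatively one can absorb the factor into the definition without loss). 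By uniqueness of the disintegration, the formula above defines $\hat{\mu}_x$ for $\mu^u$-a.e.\ $x$, and the continuous representative is the natural choice.

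Continuity of $x\mapsto\hat{\mu}_x$ in the weak-$\ast$ topology reduces to continuity of $x\mapsto\int f\,d\hat{\mu}_x = \frac{1}{Z_i(x)}\int f(\xi,x)\psi(\xi,x)\,d\mu^s(\xi)$ for each $f\in C^0(\Sig)$. Since $\psi$ and $f$ are continuous and uniformly bounded, dominated convergence together with continuity of $Z_i$ and positivity of $Z_i$ gives the claim, as long as $x$ varies inside a single cylinder (which is fine because $W^s_{loc}(x)$ is itself contained in $[0;x_0]$, so the natural domain of continuity is precisely $P^u([0;i])$).

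Finally, in the chosen coordinates the unstable holonomy is trivial to describe: $h_{x,y}(\xi,x)=(\xi,y)$ for every $\xi\in P^s([0;i])$. Pushing $\hat{\mu}_x$ forward under $h_{x,y}$ and comparing with the density of $\hat{\mu}_y$ yields
\[
R_{x,y}(\hat{y}) \;=\; \frac{Z_i(y)}{Z_i(x)}\cdot\frac{\psi(\xi,x)}{\psi(\xi,y)}\qquad\text{for }\hat{y}=(\xi,y)\in W^s_{loc}(y),
\]
which is strictly positive and jointly continuous in $(x,y)$ (and in $\hat{y}$) because $\psi$ is continuous and bounded away from $0$ and $\infty$ on the compact set $\Sig$. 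The only mildly delicate point in the whole argument is the bookkeeping of the normalization constants $Z_i$ and making sure the construction is independent of the cylinder chart used; otherwise the statement is a direct unpacking of the local product formula.
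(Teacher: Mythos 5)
Your construction is correct, and it is essentially \emph{the} proof: the paper itself does not prove Lemma \ref{lpe} but defers to \cite{BV04}, where the argument is exactly this explicit unpacking of the product formula. All the ingredients check out: the map $\hat{x}\mapsto(P^s(\hat{x}),P^u(\hat{x}))$ is a homeomorphism from $[0;i]$ onto $P^s([0;i])\times P^u([0;i])$ (admissibility of the concatenated sequence follows from admissibility of each half, since both halves have zeroth coordinate $i$); positivity and continuity of $\psi$ on the compact space $\hat{\Sigma}$ give $0<\inf\psi\le\sup\psi<\infty$, which is what makes $Z_i$ continuous and bounded away from $0$ and makes the Jacobian continuous; and the identity $Z_i\equiv 1$ $\mu^u$-a.e.\ (hence everywhere, since $\hat{\mu}$ is fully supported and $Z_i$ is continuous) follows, as you say, from testing against $f=\mathbf{1}_{(P^u)^{-1}(E)}$, so your normalized family agrees a.e.\ with the Rokhlin disintegration and is defined and weak-$^*$ continuous at \emph{every} $x$. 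The only cosmetic point is the orientation of the Jacobian: in the way $R_{x,y}$ is later used (Proposition \ref{unou}), it appears as a density on $W^s_{loc}(x)$ via $d\hat{\mu}_y(h_{x,y}(\hat{x}))=R_{x,y}(\hat{x})\,d\hat{\mu}_x(\hat{x})$, i.e.\ $R_{x,y}(\xi,x)=\tfrac{Z_i(x)}{Z_i(y)}\cdot\tfrac{\psi(\xi,y)}{\psi(\xi,x)}$, which is the reciprocal of your expression evaluated at the corresponding point; this does not affect correctness, positivity, or joint continuity.
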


\subsection{Linear Cocycles.}

Let $\hat{A}\colon \Sig \to \mathit{SL}(2,\R)$ be a continuous map. The linear cocycle defined by $\hat{A}$ is the skew-product over $\hat{f}$, $F_{\hat{A}}\colon \Sig\times \mathbb{R}^2\to \Sig\times \mathbb{R}^2$, where $$F_{\hat{A}}(\hat{x},v)=(\hat{f}(\hat{x}), \hat{A}(\hat{x})v).$$ Since the base is fixed, from now on, we identify $\hat{A}$ with $F_{\hat{A}}$ and refer to $\hat{A}$ as a linear cocycle itself. 

For $n\geq 0$, let $$\hat{A}^n(\hat{x})=\hat{A}(\hat{f}^{n-1}(\hat{x}))\ldots \hat{A}(\hat{f}(\hat{x}))\hat{A}(\hat{x}),$$ and define $$\|\hat{A}^n(\hat{x})\|=\sup_{\scriptstyle\|v\|_{2}=1}\|\hat{A}^n(\hat{x})v\|_{\scriptscriptstyle2},$$ where $\|\cdot\|_{\scriptscriptstyle2}$ denotes the usual norm in $\R^2$.

By Furstenberg-Kesten \cite{FK}, for a continuous map $\hat{A}:\Sig\to SL(2, \R)$ and any $\hat{f}$-invariant probability measure $\hat{\mu}$,  $$\lambda_+(\hat{A},\hat{x})=\lim_{n\to\infty} \frac{1}{n}\log \|\hat{A}^n(\hat{x})\|$$ and $$\lambda_-(\hat{A},\hat{x})=\lim_{n\to\infty} \frac{1}{n} \log \|(\hat{A}^n(\hat{x}))^{-1}\|^{-1},$$ are well defined $\hat{\mu}$-almost every $\hat{x}\in \Sig$. 

Both $\lambda_{+}(\hat{A},\hat{x})$ and $\lambda_{-}(\hat{A},\hat{x})$ are called extremal Lyapunov exponents of $\hat{A}$. They are $\hat{f}$-invariant maps, thus when $\hat{\mu}$ is ergodic they are constant $\hat{\mu}$-almost everywhere. In that case, we denote them as $\lambda_+(\hat{A})$ and $\lambda_-(\hat{A})$. Since the cocycle takes values in $\mathit{SL}(2,\R)$, we conclude $\lambda_+(\hat{A})+\lambda_{-}(\hat{A})=0$. 

Moreover, for a fixed ergodic measure $\hat{\mu}$, Lemma 9.1 of \cite{LLE} states that the maps $\hat{A}\mapsto\lambda_+(\hat{A})$ and $\hat{A}\mapsto\lambda_-(\hat{A})$ are upper and lower semi-continuous, respectively, in the $C^0$ topology. 

Define the set $\Omega^s=\{(\hat{x},\hat{y})\in\Sig\times\Sig\colon\hat{y}\in W^s_{loc}(\hat{x})\}$.

\begin{defn} \label{holonomiafuerte}
A \emph{uniform stable holonomy} for $\hat{A}$ over $\hat{f}$ is a collection of linear isomorphisms $H^{s}_{\hat{x},\hat{y}}\colon \mathbb{R}^2\to \mathbb{R}^2 $, defined for every $\hat{x},\hat{y}$ in the same local stable set, which satisfy the following properties,
\vspace{0.2cm}

\begin{enumerate}[label=\emph{(\alph*)}]
\item $H^{s}_{\hat{y},\hat{z}}\circ H^{s}_{\hat{x},\hat{y}}= H^{s}_{\hat{x},\hat{z}}$ and $H^{s}_{\hat{x},\hat{x}}=Id$;
\vspace{0.2cm}

\item $H^{s}_{\hat{f}(\hat{x}),\hat{f}(\hat{y})}=\hat{A}({\hat{y}})\circ H^{s}_{\hat{x},\hat{y}}\circ \hat{A}(\hat{x})^{-1}$; 
\vspace{0.2cm}

\item $(\hat{x},\hat{y})\mapsto H^{s}_{\hat{x},\hat{y}}$ is continuous for every $(\hat{x},\hat{y})\in\Omega^s$.
\end{enumerate}
\vspace{0.2cm}

A \emph{uniform unstable holonomy} for $\hat{A}$ is defined analogously for points in the same local unstable set. We use the expression \emph{uniform invariant holonomies} to refer to both uniform stable and uniform unstable holonomies.
\end{defn}

Fixed a distance $d_{\rho}$ in $\Sig$, the set of $\alpha$-H\"older maps $\hat{A}\colon \Sig\to SL(2,\mathbb{R})$ is denoted by $\mathcal{S}_{\alpha}(\Sig,2)$. We equip this space with the $\alpha$-H\"older topology given by the distance $$D_{\alpha}(\hat{A},\hat{B})=\sup_{\hat{x}\in\hat{\Sigma}}\|\hat{A}(\hat{x})-\hat{B}(\hat{x})\| + H_{\alpha}(\hat{A}-\hat{B}),$$ where $H_{\alpha}(\hat{A})$ is the smallest constant $C>0$ such that $$\|\hat{A}(\hat{x})-\hat{A}(\hat{y})\|\leq C\; d_\rho(\hat{x},\hat{y})^{\alpha} \text{ for any } \hat{x},\hat{y}\in \hat{\Sigma} \text{ with } d_\rho(\hat{x},\hat{y})\leq 1.$$

We say that $\hat{A}\in \mathcal{S}_{\alpha}(\Sig,2)$ is \emph{$\alpha$-fiber-bunched} if there exists an $N>0$ such that for every $\hat{x}\in \Sig$, $$\|\hat{A}^N(\hat{x})\|\|(\hat{A}^N(\hat{x}))^{-1}\|^{-1}\rho^{\alpha N }<1.$$ Here $\rho$ is the constant in the definition of the distance $d_{\rho}$. 

The main property of $\alpha$-fiber-bunched cocycles is that they admit uniform invariant holonomies. For a proof of this fact we refer the reader to \cite{BGV}.

The following definition generalizes the notion of fiber-bunched mentioned above. However, it still allow us to prove the existence of invariant holonomies in a non-uniform sense. See Section 3.

\begin{defn}\label{nufb} Let $\hat{\mu}$ be an ergodic probability measure of $\hat{f}$. We say that $\hat{A}\in \mathcal{S}_{\alpha}(\Sig,2)$ is \emph{non-uniformly fiber-bunched} if the extremal Lyapunov exponents of $\hat{A}$ satisfy $$\lambda_{+}(\hat{A})-\lambda_{-}(\hat{A})=2\lambda_+(\hat{A})< \alpha \log \sigma,$$ or, equivalently $$\limsup_n\|\hat{A}^n(\hat{x})\|\|{\hat{A}^n(\hat{x})}^{-1}\|\; \rho^{\alpha n}<1 \text{ for } \hat{\mu}\text{-almost every } \hat{x},$$ where $\sigma=\frac{1}{\rho}$ is the expansion rate of $\hat{f}$.
\end{defn}

We observe that when we consider less regular cocycles, the upper bound for the top Lyapunov exponent $\lambda_{+}(\hat{A})$ decreases. Moreover, non-uniform fiber-bunching is a $C^0$-open condition. 

As the uniform invariant holonomies may not be unique (see, for example, Corollary 4.9 of \cite{KS}), we consider the cocycle and one of its holonomies in pairs. More precisely, $\mathcal{H}^s_{\alpha}$ is the set of pairs $(\hat{A}, H^{s})$ where $\hat{A}\in \mathcal{S}_{\alpha}(\Sig,2)$ and $H^{s}$ is a uniform stable holonomy for $\hat{A}$. 

Consider $\mathcal{H}^s_{\alpha}$ with the topology given by the inclusion
\begin{equation}\label{topo}
\mathcal{H}^s_{\alpha}\hookrightarrow \mathcal{S}_{\alpha}(\Sig,2)\times C^0(\Omega^s,SL(2,\R)).
\end{equation}
This means that a sequence $\{(\hat{A}_k,H^{s,k})\}_{k\in\N}$ converges to $(\hat{A},H^{s})$ in $\mathcal{H}^s_{\alpha}$ if $\hat{A}_k\to \hat{A}$ in the $\alpha$-H\"older topology and the uniform stable holonomy converges uniformly in every local stable set. 

\subsection{Statement of the theorems.}

With the previous definitions it is now possible to give the precise statements of the theorems. 

For the following result we consider $(\hat{f},\hat{\mu})$ in the base, where $\hat{f}$ is a sub-shift of finite type and $\hat{\mu}$ is an ergodic $\hat{f}$-invariant probability measure with local product structure and fully supported. This class of measures includes the equilibrium states of H\"older continuous potentials \cite{Bwn} and fully supported Bernoulli measures when $\hat{f}$ is a Bernoulli shift. 

We remark that $\hat{f}$ needs to be transitive in order to admit a probability measure $\hat{\mu}$ as above. 

\begin{theoA} Let $\hat{A}\in \mathcal{S}_{\alpha}(\Sig,2)$ be a non-uniformly fiber-bunched cocycle which admits a uniform stable holonomy.  If $(\hat{A}_k, H^{s,k})\to (\hat{A},H^s)$ in $\mathcal{H}^s_{\alpha}$, then $\lambda_{+}(\hat{A}_k)\to \lambda_{+}(\hat{A})$. 
\end{theoA}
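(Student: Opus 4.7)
The strategy follows the scheme of Backes, Brown and Butler \cite{BBB}, adapted to the non-uniform setting. Since $\hat{A}\mapsto \lambda_{+}(\hat{A})$ is upper semi-continuous in the $C^{0}$ topology (Lemma 9.1 of \cite{LLE}), one already has $\limsup_{k}\lambda_{+}(\hat{A}_{k})\le \lambda_{+}(\hat{A})$, and the case $\lambda_{+}(\hat{A})=0$ is immediate from $\lambda_{+}\ge 0$; the entire effort is to establish the matching lower bound when $\lambda_{+}(\hat{A})>0$. First, I would observe that the strict inequality $2\lambda_{+}(\hat{A})<\alpha\log\sigma$, together with upper semi-continuity, forces $2\lambda_{+}(\hat{A}_{k})<\alpha\log\sigma$ for all large $k$, so the construction of Section 3 yields a non-uniform unstable holonomy $H^{u,k}$ for each $\hat{A}_{k}$ defined on a full $\hat{\mu}$-measure set. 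Because $D_{\alpha}(\hat{A}_{k},\hat{A})\to 0$ keeps $H_{\alpha}(\hat{A}_{k})$ bounded, the Pliss tails appearing in the definition of these holonomies are uniformly integrable in $k$, so that the $H^{u,k}$ can be arranged on a common full-measure set where they converge to the non-uniform unstable holonomy $H^{u}$ of $\hat{A}$.

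Arguing by contradiction, suppose a subsequence satisfies $\lambda_{+}(\hat{A}_{k})\to \ell<\lambda_{+}(\hat{A})$. For each $k$ choose an $F_{\hat{A}_{k}}$-invariant projective probability $\hat{m}_{k}$ on $\Sig\times \mathbb{P}^{1}$ projecting to $\hat{\mu}$ and realizing the Furstenberg-type identity
\begin{equation*}
\lambda_{+}(\hat{A}_{k})=\int \log\frac{\|\hat{A}_{k}(\hat{x})v\|}{\|v\|}\, d\hat{m}_{k}(\hat{x},[v]);
\end{equation*}
concretely, $\hat{m}_{k}$ is the push-forward of $\hat{\mu}$ by the graph of the unstable Oseledets direction of $\hat{A}_{k}$, which is invariant under $H^{u,k}$. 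Weak-$\ast$ compactness produces a limit $\hat{m}$, which is $F_{\hat{A}}$-invariant, projects to $\hat{\mu}$, and yields the value $\ell$ in the analogous integral for $\hat{A}$.

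The crucial claim is that $\hat{m}$ is invariant under both the uniform stable holonomy $H^{s}$ and the non-uniform unstable holonomy $H^{u}$ of $\hat{A}$. On the stable side I would couple the $C^{0}$ convergence $H^{s,k}\to H^{s}$, built into the topology of $\mathcal{H}^{s}_{\alpha}$, with the H\"older convergence of the cocycles and a disintegration argument along local stable sets, essentially as in \cite{BBB}. On the unstable side the task is more subtle since $H^{u}$ exists only $\hat{\mu}$-a.e.; the uniform integrability of the Pliss tails established in the first step lets one localize to a large set where $H^{u,k}\to H^{u}$ continuously, and then transfer $H^{u,k}$-invariance of $\hat{m}_{k}$ to $H^{u}$-invariance of $\hat{m}$.

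The decisive step, and the main obstacle, is a non-uniform version of the Avila--Viana invariance principle. Once $\hat{m}$ is invariant under $H^{s}$ and under the $\hat{\mu}$-a.e.\ defined $H^{u}$, I would combine the local product structure of $\hat{\mu}$ (Lemma \ref{lpe}), the absolute continuity of the unstable holonomy maps with continuous Jacobian $R_{x,y}$, and a disintegration of $\hat{m}$ along local unstable sets to show that this disintegration is rigid enough to force $\hat{m}$ to coincide with the Oseledets measure of $\hat{A}$; this gives $\ell=\lambda_{+}(\hat{A})$, contradicting the standing assumption. The difficult point is that the classical martingale argument relies on $H^{u}$ being defined and continuous everywhere, while here one must control the effect of the exceptional set uniformly in $k$ using only the H\"older estimates on Pliss-type subsets of large but not full measure; I expect the bulk of the paper to be devoted to carrying out precisely this non-uniform invariance principle.
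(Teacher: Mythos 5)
Your setup (upper semi-continuity, the contradiction hypothesis, the choice of $\hat{m}_k$ realizing $\lambda_{+}(\hat{A}_k)$, passing to a weak-$\ast$ limit $\hat{m}$, and showing $\hat{m}$ is invariant under $H^s$ and under the non-uniform $H^u$) reproduces the paper's Propositions \ref{utosdebil} and \ref{discont}. The genuine gap is in your ``decisive step'': knowing that $\hat{m}$ is an $su$-state does \emph{not} force $\hat{m}$ to coincide with the unstable Oseledets measure $\hat{m}^u$. By Proposition \ref{convex} the limit is a convex combination $\hat{m}=\alpha\hat{m}^u+(1-\alpha)\hat{m}^s$ with $\alpha\neq 1$, and at a discontinuity point \emph{both} $\hat{m}^u$ and $\hat{m}^s$ are $su$-states with continuous holonomy-invariant disintegrations --- that is precisely the conclusion of Proposition \ref{discont}, not a contradiction. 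So the local product structure and the absolute continuity of the unstable holonomy maps cannot, by themselves, single out $\hat{m}^u$; the rigidity you invoke would equally well ``identify'' $\hat{m}^s$. The contradiction has to come from ruling out that the limit charges the stable direction, and that requires additional input beyond holonomy invariance.

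The paper supplies this missing input by splitting into two cases according to whether the approximating measures $\hat{m}^k$ are themselves $su$-states. In Case I it uses Kalinin's theorem to produce a periodic point $\hat{p}$ with $\hat{A}^{n_p}(\hat{p})$ hyperbolic, propagates the two eigendirections by the holonomies of $\hat{A}_k$ to continuous sections $\hat{z}\mapsto a^k_{\hat{z}}, r^k_{\hat{z}}$, shows $\operatorname{supp}\hat{m}^k_{\hat{z}}\subset\{a^k_{\hat{z}},r^k_{\hat{z}}\}$, and uses separation of the limit sections plus ergodicity of $\hat{m}^k$ to force $\hat{m}^k$ onto a single invariant section, whence $\lambda_{+}(\hat{A}_k)\to\lambda_{+}(\hat{A})$. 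In Case II it passes to the quotient cocycle over $\Sigma^u$ via the uniform stable holonomy, proves the induced fiber measures $m^k_x$ are non-atomic (Proposition \ref{nonatomic}), establishes equicontinuity and uniform convergence of the disintegrations (Propositions \ref{47} and \ref{uniform}), and then invokes the energy/coupling method of Backes--Brown--Butler and Avila--Eskin--Viana to show the expanding fixed direction is invisible for the limit. None of this machinery appears in your proposal, and without it the argument does not close.
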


Let $(\hat{M},\hat{f})$ be a Bernoulli shift. That is, $\hat{M}=\Sig$ and the matrix $Q$ that defines $\Sig$ has all its entries equal to 1. In this case, we consider the measure $\hat{\mu}$ as a fully supported Bernoulli measure.

We say that $\hat{A}\colon \hat{M}\to SL(2,\mathbb{R})$ is a \emph{locally constant cocycle} if it only depends on the zeroth coordinate, that is, $\hat{A}(\hat{x})=A(x_0)$ for some continuous function $A\colon \{1,...,d\}\to SL(2,\mathbb{R})$.

A locally constant cocycle $\hat{A}$ is \emph{irreducible} if there is no proper subspace of $\mathbb{R}^2$ invariant under $A(x_0)$ for every $x_0\in \{1, ..., d\}$.

\begin{theoB} Let $\hat{A}\in \mathcal{S}_{\alpha}(\hat{M},2)$ be a non-uniformly fiber-bunched, locally constant and irreducible cocycle. If $\hat{A}_k\to \hat{A}$ in $\mathcal{S}_{\alpha}(\hat{M},2)$, then $\lambda_{+}(\hat{A}_k)\to \lambda_{+}(\hat{A})$. 
\end{theoB}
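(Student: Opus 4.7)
The plan is to deduce Theorem B from the machinery developed for Theorem A, exploiting the fact that a locally constant cocycle carries trivial uniform invariant holonomies. Indeed, if $\hat{A}(\hat{x})=A(x_0)$ and $\hat{y}\in W^s_{loc}(\hat{x})$, then $y_n=x_n$ for every $n\ge 0$, so $\hat{A}^n(\hat{x})=\hat{A}^n(\hat{y})$ and the choice $H^s_{\hat{x},\hat{y}}=\mathrm{Id}$ verifies Definition~\ref{holonomiafuerte}; the same holds in the unstable direction. Thus $\hat{A}$ itself fits into the framework of Theorem~A, but the approximants $\hat{A}_k$ need not be locally constant and may fail to admit any uniform stable holonomy, so Theorem~A cannot be applied directly.

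The first real step is to replace the uniform holonomies required by Theorem~A with non-uniform ones for the $\hat{A}_k$. Since non-uniform fiber-bunching is a $C^0$-open condition, every $\hat{A}_k$ sufficiently close to $\hat{A}$ is also non-uniformly fiber-bunched and, by the results of Section~3, admits non-uniform stable and unstable holonomies on a set of full $\hat{\mu}$-measure. Using the $\alpha$-H\"older convergence $\hat{A}_k\to\hat{A}$, I would show that these non-uniform holonomies converge in $L^1(\hat{\mu})$ to the trivial uniform holonomies of $\hat{A}$. This plays the role that the hypothesis $H^{s,k}\to H^s$ plays in the proof of Theorem~A.

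Next I would follow the variational strategy used to prove Theorem~A. Upper semicontinuity of the top exponent in the $C^0$-topology gives $\limsup_k\lambda_+(\hat{A}_k)\le\lambda_+(\hat{A})$, so only the reverse inequality is at stake. Suppose along a subsequence $\lambda_+(\hat{A}_k)\to\ell<\lambda_+(\hat{A})$ and choose for each $k$ an $\hat{A}_k$-invariant probability $\hat{m}_k$ on the projective bundle $\hat{\Sigma}\times\mathbb{P}^1$ that projects to $\hat{\mu}$ and realizes the top exponent. Let $\hat{m}$ be a weak-$*$ accumulation point; it is $\hat{A}$-invariant, projects to $\hat{\mu}$, and satisfies $\int\log\|\hat{A}(\hat{x})v\|/\|v\|\,d\hat{m}=\ell$. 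Disintegrating $\hat{m}$ along the fibers of $P^u$ via Rokhlin, the holonomy convergence of the previous step, combined with the local product structure of $\hat{\mu}$ supplied by Lemma~\ref{lpe}, shows that the conditional measures are invariant under the trivial stable holonomy of $\hat{A}$, and symmetrically under the trivial unstable one. The irreducibility of $\hat{A}$, via a Furstenberg-type argument (with the exceptional compact case reducing to $\lambda_+(\hat{A})=0$, where continuity is immediate), then forces $\int\log\|\hat{A} v\|/\|v\|\,d\hat{m}=\lambda_+(\hat{A})$, contradicting $\ell<\lambda_+(\hat{A})$.

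The main obstacle is the second step: passing from the non-uniform holonomy invariance of $\hat{m}_k$ to genuine invariance of the weak-$*$ limit $\hat{m}$ under the trivial holonomy of $\hat{A}$. The non-uniform holonomies of $\hat{A}_k$ are only defined on $k$-dependent full-measure sets and the estimates defining them deteriorate off a typical orbit, so producing a controlled limit demands uniform tail bounds on H\"older seminorms. It is precisely here that the $\alpha$-H\"older convergence hypothesis, stronger than $C^0$ convergence, is consumed.
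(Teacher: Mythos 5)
Your proposal is sound and its technical core coincides with the paper's: both proofs hinge on the fact that the locally constant cocycle $\hat{A}$ has trivial (identity) uniform holonomies, that the perturbations $\hat{A}_k$, while possibly admitting no uniform holonomy, do admit non-uniform ones converging to the identity on holonomy blocks (Proposition \ref{holonomia}), and that consequently weak-$^*$ limits of the exponent-realizing measures $\hat{m}_k$ are holonomy-invariant (Proposition \ref{utosdebil}). You correctly locate the role of the H\"older convergence hypothesis there. Where you diverge is the endgame. The paper routes everything through Proposition \ref{discont}: discontinuity forces every $\mathbb{P}(\hat{A})$-invariant measure to be an $su$-state, then Proposition \ref{yanise} upgrades the $su$-state $\hat{m}^u$ to a continuous holonomy-invariant section, which (holonomies being the identity over a full shift) must be constant, producing an invariant subspace that contradicts irreducibility. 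You instead identify the holonomy-invariant limit with a stationary measure and invoke Furstenberg's formula for irreducible $2\times 2$ cocycles to force $\int\Phi_{\hat{A}}\,d\hat{m}=\lambda_+(\hat{A})$. That endgame is legitimate and in fact slightly more economical: for the Furstenberg argument you only need the limit to be a $u$-state (every stationary measure of an irreducible two-dimensional cocycle realizes $\lambda_+$), whereas the paper's route needs the full $su$-state structure to get continuity of the section.

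One imprecision to fix: the measures $\hat{m}_k$ realizing $\lambda_+(\hat{A}_k)$ are supported on the unstable Oseledets spaces, hence are $u$-states but not $s$-states, so the limit $\hat{m}$ is directly a $u$-state only; invariance under the stable holonomy does not follow ``symmetrically'' from the same sequence. If your argument genuinely needs $s$-invariance (it does not, if you run the Furstenberg endgame off the $u$-state alone), you must extract it as the paper does, via the convex decomposition $\hat{m}=\alpha\hat{m}^u+(1-\alpha)\hat{m}^s$ with $\alpha\neq 1$ of Proposition \ref{convex} and the bookkeeping in the proof of Proposition \ref{discont}.
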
 

Observe that a locally constant cocycle is $\alpha$-H\"older continuous for every $\alpha>0$. Therefore, Theorem B implies that if $2\lambda_{+}(\hat{A})<\beta \log \sigma$, then $\hat{A}$ is a $C^{\alpha}$-continuity point of the Lyapunov exponents for every $\alpha\geq \beta$. 

In the following we construct an example that verifies the hypotheses of Theorem B. 

\begin{example} Let $\hat{M}=\{0,1\}^{\mathbb{Z}}$, $p=\displaystyle \frac{1}{2}\delta_0 + \frac{1}{2} \delta_1$, $\hat{\mu}=p^{\mathbb{Z}}$ and $\eta> 1$.

Define a locally constant cocycle $\hat{B}\colon \hat{M}\to SL(2,\mathbb{R})$ by the matrices 
$$B_0=\begin{pmatrix}
\eta & 0 \\
0 & \eta^{-1} 
\end{pmatrix}\quad \text{and} \quad B_1=\begin{pmatrix}
0 & 1 \\
-1  & 0 
\end{pmatrix}.$$ This means, $\hat{B}(\hat{x})=B_0$ if $x_0=0$ and $\hat{B}(\hat{x})=B_1$ if $x_0=1$. 

Observe that $\lambda_{+}(\hat{B})=0$. In particular, it is a $C^0$-continuity point for the Lyapunov exponents, see \cite{BV}. 

Consider $$B_{1,n}=\begin{pmatrix}
\cos \theta_n & \sin \theta_n \\
-\sin \theta_n  & \cos \theta_n 
\end{pmatrix},$$ such that $\theta_n\in \mathbb{R}\setminus \mathbb{Q}$ and $\theta_n\to \frac{\pi}{2}$ when $n\to \infty$. 

Let $\hat{B}_n\colon \hat{M}\to SL(2,\mathbb{R})$ be the locally constant cocycle defined by $B_0$ and $B_{1,n}$. Observe that for every $n\in \mathbb{N}$, the cocycle $\hat{B}_n$ is irreducible. In particular, by \cite{F}, $\lambda_{+}(\hat{B}_n)>0$ for every $n\in \mathbb{N}$. 

Since $\hat{B}_n\to \hat{B}$ in the $C^0$ topology, we infer that $\lambda_{+}(\hat{B}_n)\to 0$ as $n\to \infty$. Then, we can choose $n$ big enough such that the cocycle $\hat{B}_n$ is non-uniformly fiber-bunched and therefore satisfies the hypotheses of Theorem B. We conclude that fixed $\alpha>0$ there exists $n$ big enough such that $\hat{B}_n$ is a $C^{\alpha}$-continuity point for the Lyapunov exponents.
\end{example} 

We remark that the value of $\eta$ in Example 1 can be chosen in order to guarantee that the cocycle $\hat{B}_n$ is not fiber-bunched. Therefore, the conclusion of Theorem B applied to Example 1 does not follow from the continuity results of either \cite{BBB} or \cite{BV}. Moreover, Example 1 is a counterexample for a conjecture of Viana about fiber-bunched cocycles, see Conjecture 10.12 in \cite{LLE}. 

\section{Non-uniform invariant holonomies}

In this section, we consider a weaker version of the holonomies introduced in Definition \ref{holonomiafuerte}. In particular, for non-uniformly fiber-bunched cocycles we are able to construct this type of invariant holonomies and to prove that they vary continuously with the cocycle. 

Recall that we consider $\hat{f}$ as a sub-shift of finite type, $\hat{\mu}$ a fully supported ergodic probability measure and $\hat{A}\in \mathcal{S}_{\alpha}(\Sig, 2)$. The hypothesis of $\hat{\mu}$ having local product structure is not used in this section. 

\begin{defn}\label{holonomiadebil} 
A \emph{non-uniform stable holonomy} for $\hat{A}$ is given by a $\hat{\mu}$-full measure set $M^s$ and a collection of linear isomorphisms $H^{s}_{\hat{y},\hat{z}}\colon \mathbb{R}^2\to \mathbb{R}^2$, defined for every $\hat{y},\hat{z}\in W^s_{loc}(\hat{x})$ if $\hat{x}\in M^s$, which satisfy the following properties, 
\vspace{0.2cm}

\begin{enumerate}[label=\emph{(\alph*)}]
\item $H^{s}_{\hat{y},\hat{z}}\circ H^{s}_{\hat{w},\hat{y}}= H^{s}_{\hat{w},\hat{z}}$ and $H^{s}_{\hat{y},\hat{y}}=Id$,
\vspace{0.2cm}

\item $H^{s}_{\hat{f}(\hat{y}),\hat{f}(\hat{z})}=\hat{A}({\hat{z}})\circ H^{s}_{\hat{y},\hat{z}}\circ \hat{A}(\hat{y})^{-1}$, 
\end{enumerate}
\vspace{0.2cm}

\noindent and there exists an increasing sequence $\{\mathcal{D}^s_l\}_{l\in \N}$ of compact subsets such that $\bigcup_l \mathcal{D}^s_l=M^s$ and for every $l\in \mathbb{N}$, $\mathcal{D}^s_l\subset\mathcal{D}^s_{l+1}$, $\hat{\mu}(\Sig\setminus\mathcal{D}^s_l)<\frac{1}{l}$ and, 
\vspace{0.2cm}

\begin{enumerate}[label=\emph{(\alph*)}]
\setcounter{enumi}{2}
\item $(\hat{y}, \hat{z})\mapsto H^{s}_{\hat{y},\hat{z}}$ is continuous in $\mathcal{D}_l^s$ for $(\hat{y},\hat{z})\in\Omega^s$.
\end{enumerate}
\vspace{0.2cm}

A \emph{non-uniform unstable holonomy} for $\hat{A}$ is defined analogously for points in the same local unstable set. We use the expression \emph{non-uniform invariant holonomies} to refer to both non-uniform stable and non-uniform unstable holonomies.
\end{defn}

More precisely, item (c) states that for every $l\in \mathbb{N}$ and every $\epsilon>0$ there exists $\delta_l>0$ such that $$\|H^s_{\hat{y}_1,\hat{z}_1}-H^s_{\hat{y}_2,\hat{z}_2}\|< \epsilon,$$ if there exist $\hat{x}_1,\hat{x}_2\in \mathcal{D}^s_l$ such that $\hat{y}_1,\hat{z}_1\in W^s_{loc}(\hat{x}_1)$, $\hat{y}_2,\hat{z}_2\in W^s_{loc}(\hat{x}_2)$ and $$d(\hat{y}_1,\hat{z}_1)<\delta_l\quad \text{and} \quad d(\hat{y}_2,\hat{z}_2)<\delta_l.$$ Observe that the continuity of $H^s$ depends on the set $\mathcal{D}^s_l$, therefore the constant $\delta_l>0$ can be decreasing when $l$ increases. This is one of the main difficulties when working with Definition \ref{holonomiadebil}. 

In particular, item (c) implies that for every $\xi\in \mathbb{R}^2$, $$(\hat{y}, \hat{z}, \xi)\mapsto H^{s}_{\hat{y},\hat{z}}(\xi) \text{ is measurable in } M^s \text{ for } (\hat{y},\hat{z})\in\Omega^s.$$ 

We call invariant holonomies both the uniform invariant holonomies as in Definition \ref{holonomiafuerte} and the non-uniform invariant holonomies as above. 

From now, in order to simplify the notation we restrict to the case $\alpha=1$, that is, $\hat{A}\in \mathcal{S}_1(\Sig, 2)$ and therefore, $\hat{A}$ is a non-uniformly fiber-bunched cocycle if $2\lambda_{+}(\hat{A})<\log \sigma.$ We denote $\mathcal{H}^s_{1}=\mathcal{H}^s$. The general case for $0<\alpha<1$ is analogous. 

We use the next definition to prove the existence of non-uniform invariant holono\-mies. This is a consequence of the results in \cite{V}. 

\begin{defn}\label{dom} Given $N\in \mathbb{N}$ and $\theta>0$, define $\mathcal{D}^s_{\hat{A}}(N,\theta)$ as the set of points $\hat{x}$ satisfying $$\prod_{j=0}^{k-1}\|\hat{A}^{N}(\hat{f}^{jN}(\hat{x}))\|\|\hat{A}^{N}(\hat{f}^{jN}(\hat{x}))^{-1}\|\leq e^{kN\theta}\text{ for all }k\geq1.$$ 
Analogously, we define $\mathcal{D}^u_{\hat{A}}(N,\theta)$ when the inequality is satisfied by $(\hat{f}^{-1}, \hat{A}^{-1})$ instead.
\end{defn}

It is possible to construct the linear isomorphisms $H^s_{\hat{y},\hat{z}}$ as in Definition \ref{holonomiadebil} for the elements in $\mathcal{D}^s_{\hat{A}}(N,\theta)$.

Recall that $\rho$ is a fixed constant associated to the distance $d_{\rho}$. Moreover, $\rho$ is the contraction rate of $\hat{f}$ and $\sigma=1/\rho$ is the expansion rate of $\hat{f}$. 

\begin{prop}[Proposition 2.5, \cite{V}]\label{exist} Let $\hat{A}\in \mathcal{S}_1(\Sig,2)$. Given $N\in \mathbb{N}$ and $ \theta>0$ with $\theta<\log \sigma$, there exists $C=C(N,\theta)>0$ such that for every $\hat{x}\in\mathcal{D}^s_{\hat{A}}(N,\theta)$ and $\hat{y},\hat{z}\in W^s_{loc}(\hat{x})$, $$H^s_{\hat{y},\hat{z}}=\lim _{n\to +\infty} \hat{A}^{n}(\hat{z})^{-1}\hat{A}^{n}(\hat{y})$$ exists and satisfies $\|H^s_{\hat{y},\hat{z}}-Id\|\leq C d(\hat{y},\hat{z})$.\end{prop}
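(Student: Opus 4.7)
The plan is to prove that the sequence $V_n := \hat{A}^n(\hat{z})^{-1}\hat{A}^n(\hat{y})$, $n\geq 0$, is Cauchy in $SL(2,\R)$, with each telescoping increment $\|V_{n+1}-V_n\|$ bounded by $C\,(e^{\theta}\rho)^n\,d(\hat{y},\hat{z})$ for a uniform constant $C$. Since $V_0=\mathrm{Id}$ and the hypothesis $\theta<\log\sigma=-\log\rho$ forces $e^{\theta}\rho<1$, summing the geometric series simultaneously yields the existence of the limit $H^s_{\hat{y},\hat{z}}$ and the bound $\|H^s_{\hat{y},\hat{z}}-\mathrm{Id}\|\leq C\,d(\hat{y},\hat{z})$ with $C=C(N,\theta)$.

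A direct computation using the cocycle identity gives
\[
V_{n+1}-V_n \;=\; \hat{A}^n(\hat{z})^{-1}\,\bigl[\hat{A}(\hat{f}^n(\hat{z}))^{-1}\hat{A}(\hat{f}^n(\hat{y}))-\mathrm{Id}\bigr]\,\hat{A}^n(\hat{y}).
\]
The middle factor is bounded by $K\rho^n\,d(\hat{y},\hat{z})$ using Lipschitz continuity of $\hat{A}$ (this is where $\alpha=1$ enters), the stable contraction $d(\hat{f}^n(\hat{y}),\hat{f}^n(\hat{z}))\leq \rho^n\,d(\hat{y},\hat{z})$, and the uniform bound on $\|\hat{A}^{-1}\|$ on the compact space $\hat{\Sigma}$. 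For the outer factors, the $SL(2,\R)$ identity $\|M^{-1}\|=\|M\|$ together with the defining inequality of $\mathcal{D}^s_{\hat{A}}(N,\theta)$ gives $\|\hat{A}^{kN}(\hat{x})\|^{2}\leq e^{kN\theta}$ for every $k\geq 0$; absorbing the $r<N$ extra steps of a general $n=kN+r$ into a constant depending only on $N$ and $\|\hat{A}\|_\infty$ yields $\|\hat{A}^n(\hat{x})\|^{2}\leq C_0\,e^{n\theta}$ along the orbit of $\hat{x}$.

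The main obstacle is transferring this growth bound from $\hat{x}$ to the nearby points $\hat{y}$ and $\hat{z}$. The key is to analyze $U_n:=\hat{A}^n(\hat{x})^{-1}\hat{A}^n(\hat{y})$, which satisfies the recursion $U_{n+1}=(\mathrm{Id}+\hat{A}^n(\hat{x})^{-1}F_n\hat{A}^n(\hat{x}))\,U_n$ with $F_n:=\hat{A}(\hat{f}^n(\hat{x}))^{-1}[\hat{A}(\hat{f}^n(\hat{y}))-\hat{A}(\hat{f}^n(\hat{x}))]$. The conjugated perturbation has norm at most $\|\hat{A}^n(\hat{x})^{-1}\|\,\|\hat{A}^n(\hat{x})\|\,\|F_n\|\leq C\,(e^{\theta}\rho)^n\,d(\hat{y},\hat{x})$, so the Gronwall-type estimate
\[
\|U_n\|\leq \prod_{k\geq 0}\bigl(1+C\,(e^{\theta}\rho)^k\,d(\hat{y},\hat{x})\bigr)\leq \exp\!\bigl(C\,d(\hat{y},\hat{x})/(1-e^{\theta}\rho)\bigr)
\]
is uniformly bounded by a constant $C_1$. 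Writing $\hat{A}^n(\hat{y})=\hat{A}^n(\hat{x})\,U_n$ then gives $\|\hat{A}^n(\hat{y})\|\leq C_1\,\|\hat{A}^n(\hat{x})\|$, and analogously for $\hat{z}$.

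Putting the three pieces together,
\[
\|V_{n+1}-V_n\|\leq K\,\rho^n\,d(\hat{y},\hat{z})\cdot C_1^2\,\|\hat{A}^n(\hat{x})\|^2 \leq K\,C_0\,C_1^2\,(e^{\theta}\rho)^n\,d(\hat{y},\hat{z}),
\]
which is a summable geometric series because $e^\theta\rho<1$. This simultaneously proves that $(V_n)$ is Cauchy in $SL(2,\R)$ and delivers the claimed Lipschitz estimate on $H^s_{\hat{y},\hat{z}}-\mathrm{Id}$, with a constant $C(N,\theta)$ depending only on $N$, $\theta$, $H_1(\hat{A})$, and the $C^0$-norms of $\hat{A}^{\pm 1}$.
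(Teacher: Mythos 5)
Your proof is correct and follows the same basic scheme as the paper's: telescope $V_{n+1}-V_n$, bound the middle factor by $\rho^n d(\hat y,\hat z)$ via the Lipschitz constant and the stable contraction, bound the outer factors by $Ce^{n\theta}$, and sum the geometric series using $e^\theta\rho<1$. The one substantive difference is that the paper imports the growth estimate $\|\hat A^n(\hat y)\|\,\|\hat A^n(\hat z)^{-1}\|\leq C_0e^{n\theta}$ wholesale from Lemma 2.6 of \cite{V}, whereas you prove it: you first derive $\|\hat A^n(\hat x)\|^2\leq C_0e^{n\theta}$ along the orbit of the base point $\hat x\in\mathcal D^s_{\hat A}(N,\theta)$ directly from Definition \ref{dom} (using $\|M\|=\|M^{-1}\|$ in $SL(2,\R)$ and submultiplicativity, with the $r<N$ remainder absorbed into the constant), and then transfer it to $\hat y,\hat z\in W^s_{loc}(\hat x)$ via the Gronwall-type product bound on $U_n=\hat A^n(\hat x)^{-1}\hat A^n(\hat y)$, whose convergence again uses exactly the condition $e^\theta\rho<1$. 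This makes your write-up self-contained where the paper is not, at the cost of a page of extra estimates; both yield the same constant dependence $C=C(N,\theta,H_1(\hat A),\|\hat A^{\pm1}\|_\infty)$.
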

\textit{Proof.}
Define $H^n_{\hat{y},\hat{z}}=\hat{A}^{n}(\hat{z})^{-1}\hat{A}^{n}(\hat{y})$. In order to prove that the limit exists, it is enough to demonstrate that $\{H^n_{\hat{y},\hat{z}}\}$ is a Cauchy sequence. 

By Lemma 2.6 of \cite{V} we deduce that there exist $C_0=C_0(\hat{A},N)>0$ such that $$\|\hat{A}^n(\hat{y})\|\|\hat{A}^n(\hat{z})^{-1}\|\leq C_0e^{n\theta},$$ for every $\hat{y},\hat{z}\in W^s_{loc}(\hat{x})$, $\hat{x}\in \mathcal{D}^s_{\hat{A}}(N,\theta)$ and $n\geq 0$. This is a consequence of Definition \ref{dom}. 

Therefore, 
\begin{equation}\label{cauchy} 
\begin{aligned}
\| H^{n+1}_{\hat{y},\hat{z}}- H^{n}_{\hat{y},\hat{z}}\|&=\|\hat{A}^{n+1}(\hat{z})^{-1}\circ \hat{A}^{n+1}(\hat{y})-\hat{A}^{n}(\hat{z})^{-1}\circ \hat{A}^{n}(\hat{y})\|\\
&\leq \|\hat{A}^{n}(\hat{z})^{-1}\|\|\hat{A}(\hat{f}^n(\hat{z}))^{-1}\circ \hat{A}(\hat{f}^n(\hat{y}))-Id\|\|\hat{A}^{n}(\hat{y})\|\\
&\leq C_1 e^{n\theta}\text{dist}(\hat{f}^n(\hat{y}),\hat{f}^n(\hat{z}))\\
&\leq C_1 e^{n(\theta - \log\sigma)} \text{dist}(\hat{y},\hat{z}),
\end{aligned}
\end{equation}
here $C_1$ depends on the Lipschitz constant of $\hat{A}$. Since $\theta< \log \sigma$, we conclude that $\{H^s_{\hat{y},\hat{z}}\}$ is a Cauchy sequence. 

Observe that Equation (\ref{cauchy}) implies the following inequality, 

\begin{equation}\label{Hn}
\begin{aligned}
\| H^{n}_{\hat{y},\hat{z}}- H^{s}_{\hat{y},\hat{z}}\|&\leq\sum_{j=n}^\infty\|\hat{A}^{j+1}(\hat{z})^{-1}\circ \hat{A}^{j+1}(\hat{y})-\hat{A}^{j}(\hat{z})^{-1}\circ \hat{A}^{j}(y)\|\\
&\leq C_1 d(\hat{y},\hat{z})\sum_{j=n}^\infty e^{j(\theta-\log\sigma)}\leq Cd(\hat{y},\hat{z})e^{n(\theta-\log \sigma)}.
\end{aligned}
\end{equation}
In particular, $\|H^s_{\hat{y},\hat{z}}-Id\|\leq C d(\hat{y},\hat{z})$. $\square$ 

The next proposition gives sufficient conditions to guarantee the existence of non-uniform invariant holonomies. 

\begin{prop}\label{existdebil}
If $\hat{A}\in \mathcal{S}_1(\Sig,2)$ is a non-uniformly fiber-bunched cocycle, then $\hat{A}$ admits non-uniform invariant holonomies. 
\end{prop}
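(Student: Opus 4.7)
The plan focuses on the stable holonomy; the unstable case is obtained by applying the same argument to $(\hat{f}^{-1},\hat{A}^{-1})$. The strategy is to combine Proposition~\ref{exist}, which produces holonomies on the set $\mathcal{D}^s_{\hat{A}}(N,\theta)$ of Definition~\ref{dom}, with a Pliss-type maximizer argument showing that every orbit visits $\mathcal{D}^s_{\hat{A}}(N,\theta)$, and then to propagate the construction using cocycle invariance.

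Since $\hat{A}$ is non-uniformly fiber-bunched, $2\lambda_+(\hat{A})<\log\sigma$; fix $\theta$ with $2\lambda_+(\hat{A})<\theta<\log\sigma$. By Furstenberg--Kesten, both $\frac{1}{N}\int\log\|\hat{A}^N\|\,d\hat{\mu}$ and $\frac{1}{N}\int\log\|\hat{A}^N(\cdot)^{-1}\|\,d\hat{\mu}$ tend to $\lambda_+(\hat{A})$, so for $N$ large enough
$$
\int\phi\,d\hat{\mu}<0,\qquad \phi(\hat{x}):=\log\bigl(\|\hat{A}^N(\hat{x})\|\,\|\hat{A}^N(\hat{x})^{-1}\|\bigr)-N\theta.
$$
Setting $S_k(\hat{x}):=\sum_{j=0}^{k-1}\phi(\hat{f}^{jN}(\hat{x}))$, Birkhoff's theorem (applied to $\hat{f}^N$, which can be arranged to be ergodic with respect to $\hat{\mu}$ and is automatically so for the mixing measures of interest here) yields $S_k(\hat{x})\to-\infty$ for $\hat{\mu}$-a.e.\ $\hat{x}$. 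Therefore $S_k(\hat{x})$ attains a finite global maximum at some $j^{\ast}(\hat{x})\geq 0$, and the identity $S_k(\hat{f}^{j^{\ast} N}(\hat{x}))=S_{j^{\ast}+k}(\hat{x})-S_{j^{\ast}}(\hat{x})\leq 0$ translates exactly to $\hat{f}^{j^{\ast} N}(\hat{x})\in\mathcal{D}^s_{\hat{A}}(N,\theta)$. Thus
$$
M^s:=\bigcup_{n\geq 0}\hat{f}^{-nN}\bigl(\mathcal{D}^s_{\hat{A}}(N,\theta)\bigr)
$$
has full $\hat{\mu}$-measure, and specializing the defining inequality of Definition~\ref{dom} at $k=1$ shows that $\mathcal{D}^s_{\hat{A}}(N,\theta)$ is $\hat{f}^N$-forward invariant, so the compact sets $C_n:=\hat{f}^{-nN}(\mathcal{D}^s_{\hat{A}}(N,\theta))$ are nested in $n$.

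Pick $n_l\nearrow\infty$ with $\mathcal{D}^s_l:=C_{n_l}$ satisfying $\hat{\mu}(\hat{\Sigma}\setminus\mathcal{D}^s_l)<1/l$. For $\hat{x}\in\mathcal{D}^s_l$ and $\hat{y},\hat{z}\in W^s_{loc}(\hat{x})$, apply Proposition~\ref{exist} at $\hat{f}^{n_lN}(\hat{x})\in\mathcal{D}^s_{\hat{A}}(N,\theta)$ to obtain $H^s_{\hat{f}^{n_lN}(\hat{y}),\hat{f}^{n_lN}(\hat{z})}$, and set
$$
H^s_{\hat{y},\hat{z}}:=\hat{A}^{n_lN}(\hat{z})^{-1}\circ H^s_{\hat{f}^{n_lN}(\hat{y}),\hat{f}^{n_lN}(\hat{z})}\circ\hat{A}^{n_lN}(\hat{y}).
$$
The cocycle identity inside the limit formula of Proposition~\ref{exist} makes this definition independent of the choice of $n\geq n_l$, so it glues consistently across the nested family. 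Properties (a) and (b) of Definition~\ref{holonomiadebil} are immediate from this limit formula together with the telescoping of $\hat{A}^n$.

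For property (c), observe that the constant $C$ in the estimate $\|H^n_{\hat{y},\hat{z}}-H^s_{\hat{y},\hat{z}}\|\leq C\,d(\hat{y},\hat{z})\,e^{n(\theta-\log\sigma)}$ of Proposition~\ref{exist} depends only on $N$, $\theta$, and the Lipschitz constant of $\hat{A}$; hence the convergence is uniform in $(\hat{y},\hat{z})$ as $\hat{x}$ varies in $\mathcal{D}^s_{\hat{A}}(N,\theta)$. Since each $H^n_{\hat{y},\hat{z}}$ is jointly continuous, its uniform limit $H^s$ is continuous on $\mathcal{D}^s_{\hat{A}}(N,\theta)$; continuity on $\mathcal{D}^s_l=C_{n_l}$ then follows by composition with the continuous maps $\hat{A}^{n_lN}$ and $\hat{f}^{n_lN}$. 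The main obstacle is the Pliss-type step: Proposition~\ref{exist} on its own gives holonomies only on $\mathcal{D}^s_{\hat{A}}(N,\theta)$, which a priori might be very small, and converting the integral inequality $\int\phi\,d\hat{\mu}<0$ into the pointwise statement that every orbit enters $\mathcal{D}^s_{\hat{A}}(N,\theta)$---so that its forward saturation, of full measure, decomposes as the union of an increasing sequence of compact sets---is what actually builds the non-uniform holonomy.
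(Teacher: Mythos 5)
Your route is genuinely different from the paper's: instead of invoking Corollary 2.4 of \cite{V} (which already gives $\hat{\mu}\bigl(\bigcup_N\mathcal{D}^s_{\hat{A}}(N,\theta)\bigr)=1$) and exploiting the monotonicity $\mathcal{D}^s_{\hat{A}}(N,\theta)\subset\mathcal{D}^s_{\hat{A}}(lN,\theta)$ to build the nested blocks $\mathcal{D}^s_{\hat{A},l}=\mathcal{D}^s_{\hat{A}}(N_l,\theta)$ with $N_l=k_l!\,N_{l-1}$, you fix a single $N$, reprove the recurrence to $\mathcal{D}^s_{\hat{A}}(N,\theta)$ by a Pliss/maximizer argument, and exhaust a full-measure set by backward iterates. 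The maximizer step itself is fine, and pulling the holonomy back by $\hat{A}^{n_lN}$ is consistent with the limit formula. However, there is a concrete error: the claim that the $k=1$ inequality makes $\mathcal{D}^s_{\hat{A}}(N,\theta)$ forward invariant under $\hat{f}^N$ is false, and with it the nestedness of $C_n=\hat{f}^{-nN}(\mathcal{D}^s_{\hat{A}}(N,\theta))$. Writing $a_j=\|\hat{A}^N(\hat{f}^{jN}\hat{x})\|\,\|\hat{A}^N(\hat{f}^{jN}\hat{x})^{-1}\|$, membership of $\hat{f}^N(\hat{x})$ requires $\prod_{j=1}^{k}a_j\le e^{kN\theta}$, and from $\prod_{j=0}^{k}a_j\le e^{(k+1)N\theta}$ one only gets $\prod_{j=1}^{k}a_j\le e^{(k+1)N\theta}/a_0$; since the $k=1$ condition gives $a_0\le e^{N\theta}$ (an upper, not lower, bound on $a_0$), nothing forces $\prod_{j=1}^k a_j\le e^{kN\theta}$. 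For instance $a_0=1$, $a_1=e^{2N\theta}$, $a_j=1$ for $j\ge2$ satisfies all the defining inequalities at $\hat{x}$ but fails at $\hat{f}^N(\hat{x})$ already for $k=1$. Since Definition~\ref{holonomiadebil} explicitly demands an increasing sequence of compacta, this step as written breaks the construction. It is repairable — replace $C_{n_l}$ by $\bigcup_{n\le n_l}\hat{f}^{-nN}(\mathcal{D}^s_{\hat{A}}(N,\theta))$, which is compact, increasing in $l$, and exhausts your $M^s$; consistency of the holonomy on overlaps follows from the limit formula as you note — but the fix must be made.

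A secondary point: your Birkhoff step is applied to $\hat{f}^N$, and you dismiss the ergodicity issue parenthetically. The standing hypotheses only give $\hat{\mu}$ ergodic for $\hat{f}$ on a transitive (not necessarily mixing) subshift, so $\hat{f}^N$ may split $\hat{\mu}$ into several ergodic components, on some of which the average of your $\phi$ could a priori be positive; "can be arranged to be ergodic" needs an actual argument (e.g.\ choosing $N$ coprime to the period of the subshift, or running the maximal-function argument componentwise). The paper sidesteps both issues by quoting \cite{V}; if you want a self-contained proof, these two points are exactly where the care is needed.
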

\textit{Proof.}
Corollary 2.4 in \cite{V} states that if $\theta$ verifies $$2\lambda_{+}(\hat{A})<\theta<\log\sigma, \text{ then } \hat{\mu}\left(\bigcup_{N=1}^{\infty} \mathcal{D}^s_{\hat{A}}(N, \theta)\right)=1.$$

Moreover, the subsets $\mathcal{D}^s_{\hat{A}}(N,\theta)$ satisfy:
\begin{enumerate}[label=(\alph*)]
\item $\mathcal{D}^s_{\hat{A}}(N, \theta)$ is closed, then compact. 
\vspace{0.2cm}

\item $\mathcal{D}^s_{\hat{A}}(N, \theta)\subset \mathcal{D}^s_{\hat{A}}(lN, \theta)$ for each $l \geq 1$. 
\end{enumerate}

Therefore, we can define a sequence of compact subsets $\{\mathcal{D}^s_{\hat{A},l}\}$ such that $\mathcal{D}^s_{\hat{A},l}\subset \mathcal{D}^s_{\hat{A},l+1}$ and $\hat{\mu}(\mathcal{D}^s_{\hat{A},l})\to 1$ when $l\to \infty$. In order to verify this, we observe that for each $l\in \mathbb{N}$ there exists $k_l$ such that $$\hat{\mu}\left(\bigcup_{N=1}^{k_l} \mathcal{D}^s_{\hat{A}}(N,\theta)\right) > 1 - \frac{1}{l}.$$ Take $N_l=k_l! N_{l-1}$, then consider $\mathcal{D}^s_{\hat{A},l}=\mathcal{D}^s_{\hat{A}}(N_l, \theta)$. 

Let $M^s=\bigcup_l \mathcal{D}^s_{\hat{A},l}$, then by Proposition \ref{exist}, we conclude that the sets $\mathcal{D}^s_{\hat{A},l}$ verify all properties in Definition \ref{holonomiadebil}. Observe that the continuity required in item (c) follows by Equation (\ref{Hn}). 

If we apply the same argument to $(\hat{f}^{-1},\hat{A}^{-1})$, we deduce that there exist subsets $\mathcal{D}^u_{\hat{A},l}$ that allow us to conclude that there also exists a non-uniform unstable holonomy for $\hat{A}$. $\square$ 

We remark that the non-uniform invariant holonomies constructed in Proposition \ref{existdebil} satisfy stronger properties that the ones in Definition \ref{holonomiadebil}. By Proposition \ref{exist}, we infer that \emph{there exists $C_l>0$ such that $\|H^s_{\hat{y},\hat{z}}-Id\|\leq C_l d(\hat{y},\hat{z})$ for every $\hat{y}, \hat{z}\in W^s_{loc}(\hat{x})$ if $\hat{x}\in \mathcal{D}^s_{\hat{A},l}$}. 

\begin{defn}\label{block}
We call \emph{stable holonomy blocks for} $\hat{A}$ to the increasing sequence of compact sets $\mathcal{D}^s_{\hat{A},l}=\mathcal{D}^s_{\hat{A}}(N_l,\theta)$ as in the proof of Proposition \ref{existdebil}. Analogously, we refer to $\{\mathcal{D}^u_{\hat{A},l}\}$ as \emph{unstable holonomy blocks for} $\hat{A}$. 
\end{defn}

Suppose $\hat{A}$ satisfies the hypotheses of Proposition \ref{existdebil} and $\{\hat{A}_k\}_{k\in\N}$ is a sequence of cocycles such that $\hat{A}_k\to \hat{A}$ in the Lipschitz topology. Since, $\lambda_+$ is upper semi-continuous, then \begin{equation}\label{Ak}\limsup_{k\to\infty}\; 2\lambda_+(\hat{A}_k)\leq 2\lambda_+(\hat{A})< \log\sigma.
\end{equation}  Therefore, we also are able to prove the existence of non-uniform invariant holonomies for every $\hat{A}_k$.

In the following proposition we show that the non-uniform invariant holo\-no\-mies, given by Proposition \ref{existdebil}, are continuous as a function of $\hat{A}$. The precise statement and proof are given for the non-uniform stable holonomy, but they are analogous for the non-uniform unstable one. 

\begin{prop} \label{holonomia}
If $\hat{A}\in \mathcal{S}_1(\Sig,2)$ is a non-uniformly fiber-bunched cocycle, $\hat{A}_k\to \hat{A}$ in the Lipschitz topology and $\hat{x}\in\mathcal{D}^s_{\hat{A},l}$, then there exists $k_l\in \mathbb{N}$ such that if $k\geq k_l$, $H^{s,k}_{\hat{y},\hat{z}}$ exists for all $\hat{y},\hat{z}\in W^{s}_{loc}(\hat{x})$ and satisfies  $H^{s,k}_{\hat{y},\hat{z}}\to H^{s}_{\hat{y},\hat{z}}$. Furthermore, the convergence is uniform in $\mathcal{D}^s_{\hat{A},l}$.
\end{prop}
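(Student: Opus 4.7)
The plan is to first verify that every point of $\mathcal{D}^s_{\hat{A},l}$ also satisfies the domination condition of Definition \ref{dom} for each perturbed cocycle $\hat{A}_k$ with a slightly larger exponent $\theta' \in (\theta, \log \sigma)$, so that Proposition \ref{exist} applied to $\hat{A}_k$ produces $H^{s,k}$ on the relevant local stable sets. The convergence $H^{s,k}\to H^s$ will then follow from a triangle inequality using the finite-step approximants $H^{n,k}_{\hat{y},\hat{z}}=\hat{A}_k^n(\hat{z})^{-1}\hat{A}_k^n(\hat{y})$ and $H^n_{\hat{y},\hat{z}}=\hat{A}^n(\hat{z})^{-1}\hat{A}^n(\hat{y})$.

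For the first step, recall $\mathcal{D}^s_{\hat{A},l}=\mathcal{D}^s_{\hat{A}}(N_l,\theta)$ and fix $\theta' \in (\theta, \log\sigma)$. Choose $\epsilon>0$ small enough that $(1+\epsilon)^2 \leq e^{N_l(\theta'-\theta)}$. Since $\hat{A}_k \to \hat{A}$ uniformly on $\Sig$, there exists $k_l\in\mathbb{N}$ such that for every $k\geq k_l$ we have $\|\hat{A}_k^{N_l}(\hat{x})\| \leq (1+\epsilon)\|\hat{A}^{N_l}(\hat{x})\|$ and the analogous bound for the inverse, uniformly in $\hat{x}\in \Sig$. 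Multiplying these bounds along the orbit of $\hat{x}$ under $\hat{f}^{N_l}$ yields $\mathcal{D}^s_{\hat{A},l}\subset \mathcal{D}^s_{\hat{A}_k}(N_l,\theta')$. Applying Proposition \ref{exist} to $\hat{A}_k$ with parameters $(N_l, \theta')$ then produces $H^{s,k}_{\hat{y},\hat{z}}$ for all $\hat{y},\hat{z}\in W^s_{loc}(\hat{x})$.

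Inspecting the proof of Proposition \ref{exist}, the constant in Equation \eqref{Hn} depends only on $\theta'$, on $\sup_{\hat{x}} \|\hat{A}_k^{N_l}(\hat{x})\|$ and on the Lipschitz constant of $\hat{A}_k$; all three quantities are uniformly bounded for $k\geq k_l$ by the Lipschitz convergence $\hat{A}_k\to\hat{A}$. Hence there exists $C>0$, independent of $k\geq k_l$ and of $\hat{x}\in \mathcal{D}^s_{\hat{A},l}$, such that
$$\|H^{n,k}_{\hat{y},\hat{z}} - H^{s,k}_{\hat{y},\hat{z}}\| \leq C\, d(\hat{y},\hat{z})\, e^{n(\theta'-\log\sigma)} \quad \text{and} \quad \|H^{n}_{\hat{y},\hat{z}} - H^{s}_{\hat{y},\hat{z}}\| \leq C\, d(\hat{y},\hat{z})\, e^{n(\theta'-\log\sigma)}.$$
Given $\epsilon>0$, first choose $n$ so large that both outer terms in the triangle inequality
$$\|H^{s,k}_{\hat{y},\hat{z}} - H^s_{\hat{y},\hat{z}}\| \leq \|H^{s,k}_{\hat{y},\hat{z}} - H^{n,k}_{\hat{y},\hat{z}}\| + \|H^{n,k}_{\hat{y},\hat{z}} - H^n_{\hat{y},\hat{z}}\| + \|H^n_{\hat{y},\hat{z}} - H^s_{\hat{y},\hat{z}}\|$$
are less than $\epsilon/3$, uniformly in $k\geq k_l$ and $\hat{x}\in \mathcal{D}^s_{\hat{A},l}$; for this fixed $n$ the uniform convergence $\hat{A}_k^n\to \hat{A}^n$ on $\Sig$ then makes the middle term less than $\epsilon/3$ for $k$ large.

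The main obstacle I expect is the uniformity of $C$ in $k$: one has to track carefully how the constants $C_0$ of Lemma 2.6 in \cite{V} and the Lipschitz constant entering \eqref{Hn} depend on the cocycle, and then use the Lipschitz convergence $\hat{A}_k\to \hat{A}$ to secure a common bound. That uniformity is what forces the tail estimate to dominate simultaneously in $k$ and $\hat{x}\in \mathcal{D}^s_{\hat{A},l}$, and thereby justifies interchanging the limits in $n$ and $k$.
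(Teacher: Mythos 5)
Your proposal is correct and follows essentially the same route as the paper: establish that points of $\mathcal{D}^s_{\hat{A},l}$ lie in a domination set $\mathcal{D}^s_{\hat{A}_k}(N_l,\theta')$ for the perturbed cocycles (the paper phrases this as the existence of a Lipschitz neighborhood $\mathcal{U}^l$ with a slightly larger $\hat{\theta}$), invoke Proposition \ref{exist} for existence, and then conclude via the triangle inequality through the finite approximants $H^{n,k}$ and $H^n$ with a tail bound from Equation (\ref{Hn}) that is uniform in $k$. Your explicit $(1+\epsilon)^2\leq e^{N_l(\theta'-\theta)}$ computation simply fills in a detail the paper asserts without proof.
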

\textit{Proof.}
From Definition \ref{dom}, fixed $l\in \mathbb{N}$, we know that there exists an open neighborhood $\mathcal{U}^l$ of $\hat{A}$ in the Lipschitz topology and a constant $\hat{\theta}>\theta$ such that if $\hat{x}\in \mathcal{D}^s_{\hat{A},l}=\mathcal{D}^s_{\hat{A}}(N_l, \theta)$, then $\hat{x}\in \mathcal{D}^s_{\hat{B}}(N_l, \hat{\theta})$ for all $\hat{B}\in \mathcal{U}^l$. We assume that $\theta$ in Definition \ref{block} was taken to be uniform in $\mathcal{U}^l$, that is $\theta=\hat{\theta}$. 

For any sequence $\{\hat{A}_k\}_{k\in\N}$ converging to $\hat{A}$ in the Lipschitz topology, there exists $k_l$ such that $\hat{A}_k\in\mathcal{U}^l$ if $k\geq k_l$. Therefore, by Proposition \ref{exist} the holonomies $H^{s,k}_{\hat{y},\hat{z}}$ are defined for every $\hat{A}_k$ with $k\geq k_l$ and for any $\hat{x}\in \mathcal{D}^s_{\hat{A},l}$.

Let $n\in \N$ and $k\geq k_l$. Define $$H^n_{\hat{y},\hat{z}}=\hat{A}^{n}(\hat{z})^{-1}\circ \hat{A}^{n}(\hat{y})\quad \text{and}\quad H^{n,k}_{\hat{y},\hat{z}}=\hat{A}_k^{n}(\hat{z})^{-1}\circ \hat{A}_k^{n}(\hat{y}).$$ 

By Equation (\ref{Hn}), we know that there exists $C_0>0$ and $C_k>0$ such that, $$\|H^{s}_{\hat{y},\hat{z}}-H^{n}_{\hat{y},\hat{z}}\|\leq C_0 d(\hat{y},\hat{z})e^{n(\theta-\log\sigma)} \quad \text{ and } \quad \|H^{s,k}_{\hat{y},\hat{z}}-H^{n,k}_{\hat{y},\hat{z}}\|\leq C_k d(\hat{y},\hat{z})e^{n(\theta-\log\sigma)}.$$

In the equation above the constants $C_0$ and $C_k$ depend on $l$ and the Lipschitz constant of $\hat{A}$ and $\hat{A}_k$, respectively. Therefore, they can be chosen to be uniform in $\mathcal{U}^l$. 

Summarizing, there exists $C> 0$ such that, 
\begin{equation*}
\begin{aligned}
\|H^{s,k}_{\hat{y},\hat{z}}-H^s_{\hat{y},\hat{z}}\|&\leq \|H^{s,k}_{\hat{y},\hat{z}}-H^{n,k}_{\hat{y},\hat{z}}\|+\|H^{n,k}_{\hat{y},\hat{z}}-H^{n}_{\hat{y},\hat{z}}\|+\|H^{n}_{\hat{y},\hat{z}}-H^s_{\hat{y},\hat{z}}\|\\
&\leq 2Cd(\hat{y},\hat{z})e^{n(\theta-\log\sigma)}+\|H^{n,k}_{\hat{y},\hat{z}}-H^{n}_{\hat{y},\hat{z}}\|.
\end{aligned}
\end{equation*}
Since $H^{n}_{\hat{y},\hat{z}}$ varies continuously with the cocycle, the proposition follows. $\square$

\section{Invariance Principle}

One of the main tools in the proof of our results is the Invariance Principle, which was first developed by Furstenberg \cite{F} and Ledrappier \cite{L} for random matrices and was extended by Bonatti, G\'omez-Mont, Viana \cite{BGV} to linear cocycles over hyperbolic systems and by Avila, Viana \cite{AV} and Avila, Santamaria, Viana \cite{ASV} to general (diffeomorphisms) cocycles. In \cite{AV} the base dynamics is still assumed to be hyperbolic, whereas in \cite{ASV}, it is taken to be partially hyperbolic and volume-preserving.

In the following, we state a version of the Invariance Principle for non-uniform invariant holonomies. This context has been considered before in \cite{V} for a single stable holonomy block $\mathcal{D}^s_{\hat{A},l}$ as in Definition \ref{block}. 

Given $\hat{A}\in \mathcal{S}_1(\Sig,2)$ define the projectivization of the skew-product $F_{\hat{A}}$, $$\mathbb{P}(F_{\hat{A}})\colon \Sig\times \mathbb{P}^1\to \Sig\times \mathbb{P}^1\; \text{ by }\;\mathbb{P}(F_{\hat{A}})(\hat{x},[v])=(\hat{f}(\hat{x}),[\hat{A}(\hat{x})v]).$$ In order to simplify the notation, we denote $\mathbb{P}(F_{\hat{A}})=\mathbb{P}(\hat{A})$ and $[v]=\mathbb{P}(v)$. 

Let $\pi\colon \Sig\times \mathbb{P}^1\to \Sig$ be the canonical projection to the first coordinate. We study the $\mathbb{P}(\hat{A})$-invariant probability measures $\hat{m}$ such that $\pi_*\hat{m}=\hat{\mu}$. Since $\mathbb{P}(\hat{A})$ is a continuous map defined on a compact space, these measures always exist.

In this context, by Rokhlin \cite{R}, there exists a disintegration of $\hat{m}$ into conditional probabilities $\{\hat{m}_{\hat{x}}\}_{ \hat{x}\in \Sig}$ along the fibers which is essentially unique, that is, a measurable family of probability measures such that $\hat{m}_{\hat{x}}(\{\hat{x}\}\times\mathbb{P}^1)=1$ for $\hat{\mu}$-almost every $\hat{x}\in\Sig$ and $$\hat{m}(E)=\int \hat{m}_{\hat{x}}(E\cap \left(\{\hat{x}\}\times\mathbb{P}^1\right))d\hat{\mu}$$ for every measurable set $E\subset \Sig\times\mathbb{P}^1$.

If $H^{s}$ and $H^{u}$ are invariant holonomies for $\hat{A}$, we can define invariant holonomies for $\mathbb{P}(\hat{A})$ as $h^{s}_{\hat{x},\hat{y}}=\mathbb{P}(H^{s}_{\hat{x},\hat{y}})$ and $h^{u}_{\hat{x},\hat{y}}=\mathbb{P}(H^{u}_{\hat{x},\hat{y}})$. The following definition gives a relation between these holonomies and the disintegration of $\hat{m}$.

\begin{defn}\label{sugeral} Let $h^u$ be a unstable holonomy for $\mathbb{P}(\hat{A})$ and $\hat{m}$ be a $\mathbb{P}(\hat{A})$-invariant probability measure projecting to $\hat{\mu}$. 

We say that $\hat{m}$ is a \emph{$u$-state} if there exist a disintegration $\{\hat{m}_{\hat{x}}\}_{ \hat{x}\in \Sig}$ and a $\hat{\mu}$-full measure set $M^u$ such that $(h^{u}_{\hat{x},\hat{y}})_{*}\hat{m}_{\hat{x}}=\hat{m}_{\hat{y}}$ for every $\hat{x},\hat{y}\in M^u$ in the same local unstable set. 

The definition of \emph{$s$-state} is stated analogously. If a measure is simultaneously a $u$-state and an $s$-state, we call it \emph{$su$-state}.
\end{defn}

Observe that the definition above depends only of a full measure set of $\Sig$, then the same definition can be used for uniform invariant holonomies as in Definition \ref{holonomiafuerte} or non-uniform invariant holonomies as in Definition \ref{holonomiadebil}. 

Let $\hat{A}\in \mathcal{S}_1(\Sig,2)$. If $\lambda_{+}(\hat{A})=0$, then $\hat{A}$ is a non-uniformly fiber-bunched cocycle and therefore by Proposition \ref{existdebil} it admits non-uniform invariant holonomies $H^s$ and $H^u$. Moreover, since $\lambda_{+}(\hat{A})=0$, a stronger version of Proposition \ref{exist} can be deduced: 

\emph{For every $l\in \mathbb{N}$ there exists $C_l>0$ such that for every $\hat{x}\in\mathcal{D}^s_{\hat{A},l}$, $\hat{y},\hat{z}\in W^s_{loc}(\hat{x})$ and $j\geq 0$, $\|H^s_{\hat{f}^j(\hat{y}),\hat{f}^j(\hat{z})}-Id\|\leq C_l d(\hat{y},\hat{z})$.} 

We refer the reader to Corollary 2.8 of \cite{V} for a proof of the property mentioned above. We remark that the existence of a uniform upper bound for all the iterates is a necessary condition in order to be able to prove the next proposition. 

Recall that $\mathcal{D}^s_{\hat{A},l}$ has been described in Definition \ref{block}. 
 
\begin{prop}[Proposition 3.1, \cite{V}]\label{invarianzav} Let $\hat{A}\in \mathcal{S}_1(\Sig,2)$ be such that $\lambda_+(\hat{A})=\lambda_{-}(\hat{A})=0$ and let $\hat{m}$ be any $\mathbb{P}(\hat{A})$-invariant probability measure that projects to $\hat{\mu}$. Then, there exists a full $\hat{\mu}$-measure subset $E^s_l$ of $\mathcal{D}^s_{\hat{A},l}\cap[0;i]$, for every $i \in\{1, ..., d\}$, such that the disintegration $\{\hat{m}_{\hat{z}}\}$ of $\hat{m}$ satisfies $$\hat{m}_{\hat{z}_2}=\left(h^s_{\hat{z}_1,\hat{z}_2}\right)_*\hat{m}_{\hat{z}_1}$$ for every $\hat{z}_1,\hat{z}_2 \in E^s_l$ in the same local stable set. 
\end{prop}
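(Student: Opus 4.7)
The plan is to run a version of Ledrappier's Invariance Principle inside the stable holonomy block $\mathcal{D}^s_{\hat{A},l}$, using the fact that on this compact set both the non-uniform holonomy $h^s$ and the disintegration $\{\hat{m}_{\hat{x}}\}$ are well controlled. Fix a test function $\phi\in C(\mathbb{P}^1)$ and set $\Psi(\hat{x})=\int\phi\,d\hat{m}_{\hat{x}}$. The $\mathbb{P}(\hat{A})$-invariance of $\hat{m}$ together with essential uniqueness of the Rokhlin disintegration gives $\hat{m}_{\hat{x}}=\mathbb{P}(\hat{A}^n(\hat{x}))^{-1}_{*}\hat{m}_{\hat{f}^n(\hat{x})}$ for $\hat{\mu}$-a.e.\ $\hat{x}$ and every $n\geq 0$. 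For $\hat{z}_1,\hat{z}_2\in\mathcal{D}^s_{\hat{A},l}$ in a common local stable set, combining this identity with the cocycle relation $h^s_{\hat{f}^n(\hat{z}_2),\hat{f}^n(\hat{z}_1)}=\mathbb{P}(\hat{A}^n(\hat{z}_1))\circ h^s_{\hat{z}_2,\hat{z}_1}\circ \mathbb{P}(\hat{A}^n(\hat{z}_2))^{-1}$ yields the master identity
\begin{equation*}
\Psi(\hat{z}_1)-\int \phi\circ h^s_{\hat{z}_2,\hat{z}_1}\,d\hat{m}_{\hat{z}_2}=\int g_n\,d\bigl(\hat{m}_{\hat{f}^n(\hat{z}_1)}-(h^s_{\hat{f}^n(\hat{z}_2),\hat{f}^n(\hat{z}_1)})_{*}\hat{m}_{\hat{f}^n(\hat{z}_2)}\bigr),
\end{equation*}
where $g_n=\phi\circ\mathbb{P}(\hat{A}^n(\hat{z}_1))^{-1}\in C(\mathbb{P}^1)$; crucially, the left-hand side is independent of $n$.

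I would then produce a subsequence $n_k\to\infty$ along which the right-hand side tends to zero for $\hat{\mu}$-a.e.\ admissible pair $(\hat{z}_1,\hat{z}_2)$. By Lusin's theorem combined with Proposition \ref{holonomia}, pick a compact set $\Lambda\subset \mathcal{D}^s_{\hat{A},l}$ of measure arbitrarily close to $\hat{\mu}(\mathcal{D}^s_{\hat{A},l})$ on which the map $\hat{x}\mapsto\hat{m}_{\hat{x}}$ is weak-$^{*}$ continuous and the family $(h^s_{\hat{y},\hat{z}})$ is jointly continuous; intersect further with an Oseledets-regular set $R$ on which $\tfrac{1}{n}\log\|\hat{A}^n(\hat{x})^{\pm 1}\|\to 0$. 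By Poincar\'e recurrence, for a $\hat{\mu}$-full-measure set of $\hat{z}_1\in\Lambda\cap R$ there is a subsequence with $\hat{f}^{n_k}(\hat{z}_1)\in \Lambda$. Since $\hat{z}_2\in W^s_{loc}(\hat{z}_1)$ gives $d(\hat{f}^{n_k}(\hat{z}_1),\hat{f}^{n_k}(\hat{z}_2))\leq \rho^{n_k}d(\hat{z}_1,\hat{z}_2)\to 0$, the joint continuity on $\Lambda$ of the disintegration and of $h^s$ forces the signed measure inside the right-hand integral to converge weak-$^{*}$ to zero. The zero-exponent hypothesis is then used to pass the limit inside the integral: along the Oseledets-regular point $\hat{z}_1$, $\|\hat{A}^{n_k}(\hat{z}_1)^{\pm 1}\|$ grows sub-exponentially, so after a further reduction to a large-measure set the family $\{g_{n_k}\}$ is uniformly bounded and equicontinuous, and weak-$^{*}$ convergence of the signed measures suffices. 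This makes the right-hand side tend to $0$, so the left-hand side vanishes; ranging $\phi$ over a countable dense family $\{\phi_k\}\subset C(\mathbb{P}^1)$ and intersecting the resulting full-measure sets delivers the desired $E^s_l$ on which $\hat{m}_{\hat{z}_2}=(h^s_{\hat{z}_1,\hat{z}_2})_{*}\hat{m}_{\hat{z}_1}$.

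The main obstacle is precisely the last step: the projective maps $\mathbb{P}(\hat{A}^{n_k}(\hat{z}_1))^{-1}$ have derivatives that can grow like $\|\hat{A}^{n_k}(\hat{z}_1)\|^{2}$ and may accumulate onto degenerate (rank-one) limits in $\mathbb{P}^1$, in which case weak-$^{*}$ convergence of signed measures need not imply convergence of the integrals $\int g_{n_k}\,d(\cdot)$. Taming this requires the full strength of $\lambda_+(\hat{A})=\lambda_-(\hat{A})=0$, which via Oseledets gives sub-exponential control of $\hat{A}^{n}(\hat{z}_1)^{\pm 1}$ on a set of arbitrarily large measure and hence equicontinuity of $\{g_{n_k}\}$. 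A secondary difficulty is that the blocks $\mathcal{D}^s_{\hat{A},l}$ are not $\hat{f}$-invariant, so the Poincar\'e-recurrence step must be carried out with care: one applies recurrence not to $\mathcal{D}^s_{\hat{A},l}$ itself but to the Lusin continuity set $\Lambda$ obtained inside $\mathcal{D}^s_{\hat{A},l}$, at the cost of working only on a $\hat{\mu}$-full-measure subset $E^s_l$ of the block rather than on the block itself.
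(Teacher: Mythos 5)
First, note that the paper does not actually prove this statement: it is imported verbatim as Proposition 3.1 of \cite{V}, where the proof is Ledrappier's invariance principle --- a relative-entropy/martingale-convergence criterion in which the hypothesis $\lambda_+(\hat{A})=\lambda_-(\hat{A})=0$ enters through an entropy inequality bounded above by $\lambda_+-\lambda_-$. You attempt a direct dynamical proof instead. Your master identity is correct (it follows from essential uniqueness of the disintegration and item (b) of Definition \ref{holonomiadebil}), and the Lusin-plus-recurrence framework is the right one for arguments of this flavour (compare Propositions \ref{yanise} and \ref{utosdebil}); but the final limiting step has a genuine gap, precisely at the point you flag.

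Split your right-hand side as $\int g_{n_k}\,d\bigl(\hat{m}_{\hat{f}^{n_k}(\hat{z}_1)}-\hat{m}_{\hat{f}^{n_k}(\hat{z}_2)}\bigr)+\int\bigl(g_{n_k}-g_{n_k}\circ h^s_{n_k}\bigr)\,d\hat{m}_{\hat{f}^{n_k}(\hat{z}_2)}$, where $h^s_{n_k}=h^s_{\hat{f}^{n_k}(\hat{z}_2),\hat{f}^{n_k}(\hat{z}_1)}$. The second piece is indeed killed by zero exponents: $\mathrm{Lip}(g_{n_k})\leq \mathrm{Lip}(\phi)\|\hat{A}^{n_k}(\hat{z}_1)\|^{2}\leq e^{\epsilon n_k}$ eventually, while $\|h^s_{n_k}-\mathrm{Id}\|\leq C\rho^{n_k}$ along returns to a holonomy block, and $e^{\epsilon n_k}\rho^{n_k}\to 0$. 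The first piece is the problem. Weak-$^{*}$ continuity of $\hat{x}\mapsto\hat{m}_{\hat{x}}$ on the Lusin set $\Lambda$ controls $\int g\,d(\hat{m}_{\hat{f}^{n_k}(\hat{z}_1)}-\hat{m}_{\hat{f}^{n_k}(\hat{z}_2)})$ only for a \emph{fixed} continuous $g$, or for a uniformly equicontinuous family. Your family $\{g_{n_k}\}$ is not equicontinuous: sub-exponential growth of $\|\hat{A}^{n_k}(\hat{z}_1)^{\pm 1}\|$ means the Lipschitz constants of $g_{n_k}$ blow up sub-exponentially, but they still blow up, so there is no uniform modulus of continuity and Arzel\`a--Ascoli does not apply. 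To beat this blow-up one would need a quantitative (say H\"older) modulus of continuity for $\hat{x}\mapsto\hat{m}_{\hat{x}}$ in a Wasserstein-type distance, which is not available: the disintegration of an arbitrary $\mathbb{P}(\hat{A})$-invariant measure is merely measurable. This is exactly the work that the entropy argument of \cite{V}, \cite{L}, \cite{AV} performs and that a purely topological recurrence argument cannot. A secondary, fixable omission: you need $\hat{f}^{n_k}(\hat{z}_2)$, and not only $\hat{f}^{n_k}(\hat{z}_1)$, to return to $\Lambda$; this is handled by taking $\hat{\mu}(\Lambda)>1/2$ and extracting positive-density common return times via Birkhoff, with $E^s_l$ defined accordingly.
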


Replacing $(\hat{f},\hat{A})$ by $(\hat{f}^{-1},\hat{A}^{-1})$ we infer the existence of a disintegration which is invariant under the non-uniform unstable holonomy over a full $\hat{\mu}$-measure subset $E^u_l$ in $\mathcal{D}^u_{\hat{A},l}\cap[0;i]$.

\begin{theo}\label{invarianza} If $\hat{A}\in \mathcal{S}_1(\Sig,2)$ and $\lambda_+(\hat{A})=\lambda_{-}(\hat{A})=0$, then every $\mathbb{P}(\hat{A})$-invariant probability measure $\hat{m}$ projecting to $\hat{\mu}$ is an $su$-state. \end{theo}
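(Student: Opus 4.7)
The plan is to upgrade the block-wise invariance supplied by Proposition \ref{invarianzav} into a global invariance on a single $\hat{\mu}$-conull set, and to do this simultaneously for the stable and unstable holonomies. Since $\lambda_+(\hat{A})=0$ forces non-uniform fiber-bunching, Proposition \ref{existdebil} provides both non-uniform invariant holonomies with associated families of blocks $\{\mathcal{D}^s_{\hat{A},l}\}$ and $\{\mathcal{D}^u_{\hat{A},l}\}$ exhausting $\hat{\mu}$-conull sets $M^s$ and $M^u$. Throughout, I would fix one essentially unique Rokhlin disintegration $\{\hat{m}_{\hat{x}}\}$ of $\hat{m}$ along the fibers of $\pi$, so that the $s$-state and the $u$-state conditions end up referring to the same family of fiber measures.

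For the $s$-state conclusion, I would apply Proposition \ref{invarianzav} in each cylinder $[0;i]$ and at each level $l$, obtaining full-measure subsets $E^s_l(i)\subset\mathcal{D}^s_{\hat{A},l}\cap[0;i]$ where $(h^s_{\hat{z}_1,\hat{z}_2})_*\hat{m}_{\hat{z}_1}=\hat{m}_{\hat{z}_2}$ for pairs in a common local stable set. Setting $E^s_l=\bigcup_i E^s_l(i)$ gives a $\hat{\mu}$-conull subset of $\mathcal{D}^s_{\hat{A},l}$. To patch across $l$, I would replace $E^s_l$ by $\tilde{E}^s_l:=\mathcal{D}^s_{\hat{A},l}\cap\bigcap_{k\geq l}E^s_k$; since $\mathcal{D}^s_{\hat{A},l}\subset\mathcal{D}^s_{\hat{A},k}$ for $k\geq l$ and each $E^s_k$ is conull in $\mathcal{D}^s_{\hat{A},k}$, a countable-intersection argument shows $\tilde{E}^s_l$ is still $\hat{\mu}$-conull in $\mathcal{D}^s_{\hat{A},l}$, and the family $\{\tilde{E}^s_l\}$ is now increasing. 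The union $E^s=\bigcup_l\tilde{E}^s_l\subset M^s$ then has full $\hat{\mu}$-measure, and any two points of $E^s$ in a common local stable set both belong to some $\tilde{E}^s_l\subset E^s_l$, where Proposition \ref{invarianzav} delivers the desired invariance. This makes $\hat{m}$ an $s$-state.

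The identical argument, applied to $(\hat{f}^{-1},\hat{A}^{-1})$ using the unstable analogue of Proposition \ref{invarianzav} and the blocks $\{\mathcal{D}^u_{\hat{A},l}\}$, produces a $\hat{\mu}$-conull set $E^u$ on which the same disintegration $\{\hat{m}_{\hat{x}}\}$ is $h^u$-invariant, making $\hat{m}$ a $u$-state as well. Uniqueness of the Rokhlin disintegration is essential at this step: it guarantees that the two invariance statements refer to the very same fiber measures, so $\hat{m}$ is indeed an $su$-state.

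The analytical content of the theorem is already inside Proposition \ref{invarianzav}; the main obstacle I foresee is organizational, namely the careful choice of nested conull subsets so that a single $\hat{\mu}$-conull set witnesses the holonomy invariance across all holonomy blocks and all cylinders simultaneously. Once this bookkeeping is handled and one and the same disintegration is used in both the stable and unstable arguments, the $su$-state conclusion is immediate.
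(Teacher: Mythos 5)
Your argument is correct and is essentially the paper's own proof: both rest entirely on Proposition \ref{invarianzav} applied block by block, the nesting $\mathcal{D}^s_{\hat{A},l}\subset\mathcal{D}^s_{\hat{A},l+1}$, and essential uniqueness of disintegrations, followed by the same reduction of the $u$-state claim to $(\hat{f}^{-1},\hat{A}^{-1})$. The only (immaterial) difference is that you fix one disintegration and shrink the conull sets by countable intersection, whereas the paper glues the level-$l$ disintegrations into a single one defined piecewise on $E^s_{l+1}\setminus E^s_l$; both devices handle the same bookkeeping issue.
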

\textit{Proof.}
By Proposition \ref{invarianzav}, we deduce that for each $i\in \{1,..,d\}$  and $l\in \mathbb{N}$ there is a disintegration $\{\hat{m}_{\hat{z}}\}$ and a full measure subset $E^s_l$ of $\mathcal{D}^s_{\hat{A},l}\cap[0;i]$, that satisfy $$\hat{m}_{\hat{z}_2}=\left(h^s_{\hat{z}_1,\hat{z}_2}\right)_*\hat{m}_{\hat{z}_1}$$ for every $\hat{z}_1,\hat{z}_2 \in E^s_l$ in the same local stable set. 

We want to find a disintegration $\{\hat{m}_{\hat{z}}\}$ invariant by the non-uniform stable holonomy in a full measure set of $\Sig$. 

Initially, we fix $[0;i]$ and denote $\{\hat{m}^l_{\hat{z}}\}$ the disintegration of $\hat{m}$ associated to $\mathcal{D}^s_{\hat{A},l}\cap[0;i]$. By the essential uniqueness of disintegrations, we conclude that $\hat{m}^l_{\hat{z}}=\hat{m}^{l+1}_{\hat{z}}$ for $\hat{\mu}$-almost every $\hat{z}\in\mathcal{D}^s_{\hat{A},l}\cap [0;i]\subset \mathcal{D}^s_{\hat{A},l+1}\cap [0;i]$. Then, for every $l\in \mathbb{N}$, we denote by $E^s_l$ the full measure subset of $\mathcal{D}^s_{\hat{A},l}\cap[0;i]$, satisfying that $E^s_l\subset E^s_{l+1}$ and $\hat{m}^l_{\hat{z}}=\hat{m}^{l+1}_{\hat{z}}$ for $\hat{z}\in E^s_l$. Next, we define a disintegration of $\hat{m}$ on each $[0;i],$ $$\hat{m}_{\hat{z}}=\left\lbrace
           \begin{matrix}
            \hat{m}^1_{\hat{z}} & \hat{z}\in E^s_1\subset\mathcal{D}^s_{\hat{A},1},\\
            \hat{m}^{l+1}_{\hat{z}} & \hat{z}\in E^s_{l+1}\setminus E^s_l.
           \end{matrix}
         \right.$$
         
Let $E^s=\bigcup E^s_l$. In order to prove the invariance of the disintegration by the non-uniform stable holonomy in $E^s$, it is enough to choose $\hat{z}_1,\hat{z}_2\in E^s$ in the same stable leaf and verify $\hat{m}_{\hat{z}_2}=\left(h^s_{\hat{z}_1,\hat{z}_2}\right)_*\hat{m}_{\hat{z}_1}$. As $\{E^s_l\}$ is an increasing sequence of sets, there exists $l$ such that $\hat{z}_1,\hat{z}_2\in E^s_l$, and then, the definition of the disintegration implies $$\hat{m}_{\hat{z}_2}=\hat{m}_{\hat{z}_2}^l=\left(h^s_{\hat{z}_1,\hat{z}_2}\right)_*\hat{m}^l_{\hat{z}_1}=\left(h^s_{\hat{z}_1,\hat{z}_2}\right)_*\hat{m}_{\hat{z}_1}.$$

Proceeding this way in every $[0;i]$, we infer the existence of a disintegration in $\hat{\mu}$-almost every point in $\Sig$. We conclude that $\hat{m}$ is an $s$-state. Applying the same argument to $(\hat{f}^{-1}, \hat{A}^{-1})$ we deduce that $\hat{m}$ is also a $u$-state. $\square$ 

Theorem \ref{invarianza} is also true if the cocycle $\hat{A}$ admits one uniform invariant holonomy and one non-uniform invariant holonomy. In this case, the proof follows from the argument above and Proposition 1.16 of \cite{BGV}. 

The following proposition is essential to prove Theorem A and the hypothesis of $\hat{\mu}$ having local product structure is crucial here. Both the uniform stable holonomy and the local product structure of $\hat{\mu}$ allow us to use the non-uniform unstable holonomy to transport the disintegration of $\hat{m}$ from a local unstable set to every point of the cylinder in a continuous way. The result is an extension of Theorem 6 in \cite{BGV}, following the ideas of Proposition 4.8 of \cite{AV}.

\begin{prop}\label{yanise}
Let $\hat{A}\in \mathcal{S}_1(\Sig,2)$ be a non-uniformly fiber-bunched cocycle which admits a uniform stable holonomy. If $\hat{m}$ is an $su$-state, then there exists a continuous disintegration of $\hat{m}$, $\{\hat{m}_{\hat{z}}\}$, which is invariant by $\mathbb{P}(\hat{A})$ and both holonomies. 
\end{prop}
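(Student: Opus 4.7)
The plan is to adapt the approach of Theorem~6 of \cite{BGV} and Proposition~4.8 of \cite{AV} to the non-uniform setting, exploiting the fact that the non-uniform unstable holonomy is actually continuous along \emph{every} local unstable leaf passing through a point of a holonomy block. I will construct an explicit candidate disintegration that is continuous by construction, and then use the $su$-state hypothesis to identify it $\hat{\mu}$-a.e.\ with the given one.

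Work cylinder by cylinder; the cylinders $[0;i]$ being clopen in $\Sig$, local continuity glues to global continuity. Fix $[0;i]$ and an anchor $\hat{x}_0 \in [0;i]\cap \mathcal{D}^u_{\hat{A},l}$ for some $l$ large enough that $\hat{x}_0$ also lies in the $\hat{\mu}$-full measure subset where both the $s$- and the $u$-invariance identities from the $su$-state hypothesis are simultaneously valid. By the local product structure (Lemma~\ref{lpe}), the point
$$\hat{w}(\hat{z}) \;:=\; W^s_{loc}(\hat{z})\cap W^u_{loc}(\hat{x}_0)$$
is a single point of $[0;i]$ depending continuously on $\hat{z}$; define
$$\tilde{m}_{\hat{z}} \;:=\; \bigl(h^s_{\hat{w}(\hat{z}),\hat{z}}\bigr)_*\, \bigl(h^u_{\hat{x}_0,\hat{w}(\hat{z})}\bigr)_*\, \hat{m}_{\hat{x}_0}.$$
Continuity of $\hat{z}\mapsto\tilde{m}_{\hat{z}}$ on $[0;i]$ is then immediate: the uniform $h^s$ is jointly continuous in its endpoints by Definition~\ref{holonomiafuerte}(c); the map $\hat{z}\mapsto\hat{w}(\hat{z})$ is continuous by construction; and since every $\hat{w}(\hat{z})$ lies in $W^u_{loc}(\hat{x}_0)$ with $\hat{x}_0\in\mathcal{D}^u_{\hat{A},l}$, property (c) of Definition~\ref{holonomiadebil} applied to the non-uniform unstable holonomy produced by Proposition~\ref{existdebil} yields continuity of $\hat{w}(\hat{z})\mapsto h^u_{\hat{x}_0,\hat{w}(\hat{z})}$ on the entire local unstable leaf through $\hat{x}_0$.

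To identify $\tilde{m}_{\hat{z}}$ with $\hat{m}_{\hat{z}}$ on a $\hat{\mu}$-full measure subset of $[0;i]$, combine the $u$-invariance $(h^u_{\hat{x}_0,\hat{w}(\hat{z})})_*\hat{m}_{\hat{x}_0}=\hat{m}_{\hat{w}(\hat{z})}$, valid for $\mu^u$-a.e.\ choice of $P^u(\hat{z})$ because $\hat{m}$ is a $u$-state, with the $s$-invariance $(h^s_{\hat{w}(\hat{z}),\hat{z}})_*\hat{m}_{\hat{w}(\hat{z})}=\hat{m}_{\hat{z}}$, valid for $\hat{\mu}_{P^u(\hat{z})}$-a.e.\ $\hat{z}\in W^s_{loc}(\hat{z})$ because $\hat{m}$ is an $s$-state. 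A Fubini argument based on the product decomposition of $\hat{\mu}\vert_{[0;i]}$ ensures that the set of $\hat{z}$ where both identities hold has full measure. Once $\tilde{m}_{\hat{z}}=\hat{m}_{\hat{z}}$ $\hat{\mu}$-a.e.\ is established, the $\mathbb{P}(\hat{A})$-invariance of $\hat{m}$ together with continuity of $\tilde{m}_{\hat{z}}$ and the equivariance identities (b) of Definitions~\ref{holonomiafuerte} and~\ref{holonomiadebil} upgrade the a.e.\ identities
$$(h^s_{\hat{y},\hat{z}})_*\tilde{m}_{\hat{y}}=\tilde{m}_{\hat{z}},\qquad (h^u_{\hat{y},\hat{z}})_*\tilde{m}_{\hat{y}}=\tilde{m}_{\hat{z}},\qquad \mathbb{P}(\hat{A}(\hat{z}))_*\tilde{m}_{\hat{z}}=\tilde{m}_{\hat{f}(\hat{z})}$$
from $\hat{\mu}$-a.e.\ to everywhere on $\Sig$, using that $\hat{\mu}$ has full support.

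The main obstacle is precisely the non-uniformity of $h^u$: one cannot let both endpoints of $h^u_{\hat{y},\hat{z}}$ vary freely over $\Omega^u$ and retain continuity, so a direct attempt to run the BGV/AV argument verbatim fails. The decisive trick is to keep the first endpoint $\hat{x}_0$ pinned inside a single block $\mathcal{D}^u_{\hat{A},l}$ and to transport between different local stable leaves only through this single point, letting the uniform $h^s$ do the remaining work of moving the measure to the actual target $\hat{z}$. This careful separation of roles between the uniform and non-uniform holonomies, together with the local product structure that makes $W^s_{loc}(\hat{z})\cap W^u_{loc}(\hat{x}_0)$ well defined and continuously varying, is what makes the construction succeed despite $h^u$ being only non-uniform.
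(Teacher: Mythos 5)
Your construction is exactly the one in the paper's proof: fix an anchor $\hat{x}_0$ in an unstable holonomy block intersected with the full-measure set where the $su$-state identities hold (with conditionally full measure of good points on its unstable leaf), transport $\hat{m}_{\hat{x}_0}$ first by $h^u$ along $W^u_{loc}(\hat{x}_0)$ and then by the uniform $h^s$ to the whole cylinder, and use the local product structure to identify the result with the given disintegration almost everywhere. The continuity argument (pinning one endpoint of $h^u$ inside a single block, letting only the second endpoint vary) is also the paper's, so the proposal is correct and essentially identical in approach.
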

\textit{Proof.}
We start by considering the non-uniform unstable holonomy given by Proposition \ref{existdebil}. By definition of $su$-state, there exist two disintegrations $\{\hat{m}_{\hat{x}}^1\}_{\hat{x}\in\Sig}$ and $\{\hat{m}_{\hat{x}}^2\}_{\hat{x}\in\Sig}$ of $\hat{m}$, and a $\hat{\mu}$-full measure subset $\hat{U}_{i}$ of $[0;i]$ for every $i\in\{1,...,d\}$ such that
\vspace{0.2cm}

\begin{enumerate}[label=(\roman*)]
\item $(h^s_{\hat{x},\hat{y}})_*\hat{m}_{\hat{x}}^1=\hat{m}_{\hat{y}}^1$ for each $\hat{y}\in W^s_{loc}(\hat{x})$ with $\hat{x}\in\hat{U}_{i}$ ($s$-state);
\vspace{0.2cm}

\item $(h^u_{\hat{x},\hat{y}})_*\hat{m}_{\hat{x}}^2=\hat{m}_{\hat{y}}^2$ for each $\hat{y}\in W^u_{loc}(\hat{x})$ with $\hat{x}\in\hat{U}_{i}$ ($u$-state);
\vspace{0.2cm}

\item $\hat{m}_{\hat{x}}^1=\hat{m}_{\hat{x}}^2$ for each $\hat{x}\in\hat{U}_{i}$ (essential uniqueness of disintegration).
\end{enumerate}
\vspace{0.2cm}

We consider $l$ large enough such that the unstable holonomy block $\mathcal{D}^u_{A,l}$ of Definition \ref{block} satisfies $\mathcal{D}^u_{\hat{A},l}\cap\hat{U}_i\neq \emptyset$ for each $i\in \{1,...,d\}$ and fix $\hat{x}\in \mathcal{D}^u_{\hat{A},l}\cap\hat{U}_{i}$ such that $\mu^u_{\hat{x}}(W^u_{loc}(\hat{x})\setminus\hat{U}_{i})=0$. Here $\mu^u_{\hat{x}}$ is the element of the disintegration of $\hat{\mu}$ relative to the unstable sets of $\hat{f}$. See the definitions before Lemma \ref{lpe}. 

Define $\hat{m}_{\hat{x}}=\hat{m}_{\hat{x}}^1$ and
\vspace{0.2cm}

\begin{enumerate}[label=(\alph*)]
\item $\hat{m}_{\hat{y}}=(h^u_{\hat{x},\hat{y}})_*\hat{m}_{\hat{x}}=(h^u_{\hat{x},\hat{y}})_*\hat{m}_{\hat{x}}^1$ for each $\hat{y}\in W^u_{loc}(\hat{x})\cap [0;i]$;
\vspace{0.2cm}

\item $\hat{m}_{\hat{z}}=(h^s_{\hat{y},\hat{z}})_*\hat{m}_{\hat{y}}$ for each $\hat{z}\in W^s_{loc}(\hat{y})\cap [0;i]$ with $\hat{y}\in W^u_{loc}(\hat{x})\cap [0;i]$.
\end{enumerate}
\vspace{0.2cm}

By (i)-(iii), we conclude that $\hat{m}_{\hat{y}}=\hat{m}_{\hat{y}}^1=\hat{m}_{\hat{y}}^2$ for every $\hat{y}\in W^u_{loc}({\hat{x}})\cap \hat{U}_{i}$ and  $\hat{m}_{\hat{z}}=\hat{m}_{\hat{z}}^2$ for every $\hat{z}\in W^s_{loc}(\hat{y})\cap\hat{U}_{i}$ with $\hat{y}\in  W^u_{loc}({\hat{x}})\cap \hat{U}_{i}$. By the choice of $\hat{x}$ and the fact that $\hat{\mu}$ has local product structure, the later corresponds to a  full measure subset of points $\hat{z}\in[0;i]$. In particular, $\{\hat{m}_{\hat{z}}\}_{\hat{z}\in[0;i]}$ is a disintegration for $\hat{m}$.

The continuity of $\hat{m}_{\hat{z}}$ is a consequence of the fact that $\hat{z}\mapsto h^{us}_{\hat{x},\hat{z}}$ is a continuous map, where $$h^{us}_{{\hat{x}},{\hat{z}}}=h^{s}_{{\hat{y}},{\hat{z}}}\circ h^{u}_{\hat{x},{\hat{y}}},$$ with $\hat{x},\hat{y},\hat{z}$ as in (a)-(b). This is true because $\hat{y}\mapsto h^u_{\hat{x},\hat{y}}$ is continuous in $W^u_{loc}(\hat{x})$ due to $\hat{x}\in\mathcal{D}^u_{\hat{A},l}$, $\hat{z}\mapsto W^s_{loc}(\hat{z})\cap W^u_{loc}(\hat{x})$ is continuous in $[0;i]$ and $(\hat{z},\hat{y})\mapsto h^s_{\hat{y},\hat{z}}$ is continuous in $\Omega^s$.

By the definition of the disintegration it is clear that it is invariant by both holo\-no\-mies. $\square$

\section{Limit of su-states}

In this section we prove that being an $su$-state is a closed property in the set of non-uniformly fiber-bunched cocycles with the Lipschitz topology. This is used in the next section to give a characterization of discontinuity points of the Lyapunov exponents.  

This property of $su$-states has already been proved in several contexts, for example, see Proposition 5.17 of \cite{LLE} for a proof for locally constant cocycles and Lemma 4.3 of \cite{BBB} and Corollary 2.3 of \cite{TY} for linear cocycles over hyperbolic maps. For linear cocycles over partially hyperbolic diffeomorphisms, it has been stated in Corollary 5.3 of \cite{AV} and a detailed proof can be found in Appendix A of \cite{M}. In these results the holonomies that are considered are uniform invariant holonomies. 

Lemma 4.3 of \cite{BBB} is more general that the next proposition. They allow the measure in the base to variate by taking a sequence $\hat{\mu}_k$. Since we do not have a complete control of the non-uniform invariant holonomies, we are not able to adapt their argument to our context. We use the proof in \cite{M} to conclude a result for non-uniformly fiber-bunched cocycles.
 
We prove that being and $s$-state is a closed property. For $u$-states, it is sufficient to consider $(\hat{f}^{-1}, \hat{A}^{-1}).$

\begin{prop}\label{utosdebil} Let $\hat{A}\in \mathcal{S}_1(\Sig,2)$ be a non-uniformly fiber-bunched cocycle. Suppose that $\hat{A}_k$ converges in the Lipschitz topology to $\hat{A}$ and let $\hat{m}^k$ be an $s$-state for $\mathbb{P}(\hat{A}_k)$ projecting to $\hat{\mu}$. If $\hat{m}^k$ converges to $\hat{m}$ in the weak-$^{*}$ topology, then $\hat{m}$ is an $s$-state for $\mathbb{P}(\hat{A})$.	\end{prop}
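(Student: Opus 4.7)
The plan is to carry out the standard argument that $s$-states are weak-$^\ast$ closed (compare Lemma~4.3 of \cite{BBB} or Appendix~A of \cite{M} in the uniform case), but one stable holonomy block $\mathcal{D}^s_{\hat{A},l}$ at a time, exploiting the localized continuity of the non-uniform holonomies provided by Proposition~\ref{holonomia}, and then to let $l\to\infty$. The point is that on each compact block the non-uniform holonomies behave like uniform ones, and this is exactly the regularity needed to commute ``holonomy composition'' with ``integration against a weak-$^\ast$ limit''.

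Concretely, I would first recast the $s$-state condition of Definition~\ref{sugeral} as a family of integral identities. For $\phi\in C^0(\Sig\times\mathbb{P}^1)$ and $\psi\in C^0(\Sig\times\Sig)$ supported on pairs $(\hat{x},\hat{y})$ with $\hat{x},\hat{y}\in \mathcal{D}^s_{\hat{A},l}\cap[0;i]$ lying in the same local stable set, a measure $\hat{m}$ on $\Sig\times\mathbb{P}^1$ projecting to $\hat{\mu}$ is an $s$-state iff
\begin{equation*}
\int\!\phi(\hat{y},h^s_{\hat{x},\hat{y}}(w))\,\psi(\hat{x},\hat{y})\,d\hat{m}_{\hat{x}}(w)\,d\lambda_l(\hat{x},\hat{y}) = \int\!\phi(\hat{y},v)\,\psi(\hat{x},\hat{y})\,d\hat{m}_{\hat{y}}(v)\,d\lambda_l(\hat{x},\hat{y}),
\end{equation*}
for all such $(\phi,\psi)$, every $l$ and every $i$. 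Here $\lambda_l$ is an auxiliary reference measure on pairs $(\hat{x},\hat{y})\in\mathcal{D}^s_{\hat{A},l}\cap[0;i]$ in the same local stable leaf, built from a Rokhlin disintegration of $\hat{\mu}$ along the local stable partition in that block; crucially $\lambda_l$ depends only on $\hat{\mu}$, not on the cocycle. That this family of identities is equivalent to the $s$-state property is a standard Fubini/essential-uniqueness argument for Rokhlin disintegrations, and on the block it makes sense because $(\hat{x},\hat{y},w)\mapsto h^s_{\hat{x},\hat{y}}(w)$ is continuous by item (c) of Definition~\ref{holonomiadebil} together with Proposition~\ref{exist}.

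For each large $k$, Proposition~\ref{holonomia} (and the observation in its proof that $\mathcal{D}^s_{\hat{A},l}\subseteq\mathcal{D}^s_{\hat{A}_k}(N_l,\theta)$ once $\hat{A}_k\in\mathcal{U}^l$) ensures that $\hat{m}_k$ is $s$-invariant on $\mathcal{D}^s_{\hat{A},l}$, so the displayed identity holds for $(\hat{m}_k,h^{s,k})$. Proposition~\ref{holonomia} also gives $h^{s,k}\to h^s$ uniformly over pairs with base in $\mathcal{D}^s_{\hat{A},l}$, so the integrand $\phi(\hat{y},h^{s,k}_{\hat{x},\hat{y}}(w))\psi(\hat{x},\hat{y})$ converges uniformly to $\phi(\hat{y},h^s_{\hat{x},\hat{y}}(w))\psi(\hat{x},\hat{y})$; combined with $\hat{m}_k\to\hat{m}$ weak-$^\ast$ both sides converge to the corresponding integrals against $\hat{m}$. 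This yields the identity for $(\hat{m},h^s)$ on each block and cylinder. Sending $l\to\infty$ and using $\hat{\mu}(\Sig\setminus\mathcal{D}^s_{\hat{A},l})\to 0$, and then patching the resulting disintegrations on overlapping blocks via essential uniqueness exactly as in the proof of Theorem~\ref{invarianza}, produces a single disintegration of $\hat{m}$ and a $\hat{\mu}$-full set on which $(h^s_{\hat{x},\hat{y}})_\ast\hat{m}_{\hat{x}}=\hat{m}_{\hat{y}}$.

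The main obstacle is the test-function reformulation: in the uniform case one can take $\phi\circ h^s$ as a globally continuous function on $\Sig\times\mathbb{P}^1$ and integrate against $\hat{m}_k$ directly, whereas here $h^s$ is only continuous on the block, and the composition must be truncated by a cutoff $\psi$ supported on pairs in $\mathcal{D}^s_{\hat{A},l}\times\mathcal{D}^s_{\hat{A},l}$. This truncation is what forces the base measure to stay fixed along the sequence: the reference measure $\lambda_l$ must be the same for $\hat{m}_k$ and for $\hat{m}$ in order to pass the identity to the limit, which explains the remark preceding the statement that one cannot adapt the proof to allow a varying $\hat{\mu}_k$. A secondary subtlety is that the full-measure invariance set of each $\hat{m}_k$ depends on $k$, so one cannot pick a common reference base point $\hat{x}_0$; this is bypassed by working with the integrated identity throughout, and only reconstructing pointwise invariance at the very end via essential uniqueness of the disintegration.
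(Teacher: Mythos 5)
Your overall architecture is the same as the paper's: work one holonomy block $\mathcal{D}^s_{\hat{A},l}$ at a time, use Proposition~\ref{holonomia} to get that the $\hat{A}_k$-holonomies are defined on the same block and converge uniformly there, pass to the weak-$^*$ limit, and then patch over $l$ by essential uniqueness of disintegrations. The mechanism differs: the paper does not use a two-point integral identity but instead collapses each fiber measure to a reference point of its local stable leaf via a globally defined map $\overline{h}^k$ (equal to the holonomy on leaves meeting the block and to the identity elsewhere), characterizes the $s$-state property as measurability of $\hat{y}\mapsto\overline{m}_{\hat{y}}$ with respect to the $\sigma$-algebra generated by the local stable sets, and transports that measurability through the limit by a weak-$L^2$ plus Hahn--Banach argument.

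The genuine gap is in your limit passage. Your displayed identity is an integral against the disintegrations $\hat{m}^k_{\hat{x}}$, $\hat{m}^k_{\hat{y}}$ and against a reference measure $\lambda_l$ carried by pairs in the block; weak-$^*$ convergence $\hat{m}_k\to\hat{m}$ says nothing directly about such integrals, and uniform convergence of the integrand only lets you replace $h^{s,k}$ by $h^s$. To invoke weak-$^*$ convergence you must rewrite each side as $\int G_k\,d\hat{m}_k$ for a function $G_k$ on $\Sig\times\mathbb{P}^1$ obtained by integrating out $\hat{y}$ against the conditionals of $\lambda_l$; the resulting $G_k$ is only bounded measurable, not continuous, because it carries the indicator of the block (note that $\mathcal{D}^s_{\hat{A},l}$ typically has empty interior in $\Sig$, so a continuous cutoff $\psi$ supported in $\mathcal{D}^s_{\hat{A},l}\times\mathcal{D}^s_{\hat{A},l}$ would have to vanish identically --- the truncation must live in $\lambda_l$, not in $\psi$) and the merely measurable conditional measures of $\hat{\mu}$ restricted to the block. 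Closing this requires a Lusin approximation of $G$ by a continuous function off a base set of small $\hat{\mu}$-measure, together with the fact that $\pi_*\hat{m}_k=\hat{\mu}$ for every $k$ so that this exceptional set has uniformly small $\hat{m}_k$-mass; this is exactly the second half of the paper's estimate~(\ref{22}) and the content of its two Claims, and it is also the precise reason (which you correctly intuited) why the argument cannot tolerate a varying base measure $\hat{\mu}_k$. Without this step spelled out, the assertion that ``both sides converge to the corresponding integrals against $\hat{m}$'' is unjustified.
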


\textit{Proof.} By the semi-continuity of $\lambda_+$, if $\hat{A}$ is a non-uniformly fiber-bunched cocycle, then every $\hat{A}_k$ has a non-uniform stable holonomy defined in the stable holonomy blocks $\mathcal{D}^s_{\hat{A},l}$ of Definition \ref{block}. See Proposition \ref{holonomia}.

We begin by fixing $l_0\in \mathbb{N}$ large enough such that $\mathcal{D}^s_{\hat{A},l_0}\cap[0;i]\neq\emptyset$ for each $i\in \{1,...,d\}$. By the construction of the sets $\mathcal{D}^s_{\hat{A},l}$, the same property is true for every $l\geq l_0$. 

We fix $\hat{x}_i\in[0;i]$ for each $i\in \{1,...,d\}$. For every $l\geq l_0$, define $$\mathcal{D}_{i,l}=\{\hat{z}\in W^u_{loc}(\hat{x}_i):  W^s_{loc}(\hat{z})\cap\mathcal{D}^s_{\hat{A},l}\neq \emptyset\}.$$ As a result, $\mathcal{D}^s_{\hat{A},l}\cap [0;i]\subset \bigcup \{W^s_{loc}(\hat{z}) : \hat{z}\in\mathcal{D}_{i,l}\}$. 

Let $\mathcal{B}^l\subset \mathcal{M}$ be the sub $\sigma$-algebra generated by $$\{W^s_{loc}(\hat{z}) : \hat{z}\in \mathcal{D}_{i,l} \text{ and } \ i\in \{1,...,d\}\}.$$ Therefore, the elements in $\mathcal{B}^l$ are the measurable sets $E$ such that for each $\hat{z}$ and $j$, either $E$ contains $W^s_{loc}(\hat{z})$ or it is disjoint from it. 

We fix $l\geq l_0$ and for each $k\in \mathbb{N}$ define the map $$\overline{h}^k_{\hat{y}}=\left\lbrace
           \begin{matrix}
h^{s,k}_{\hat{y},\hat{z}} &\text{ if } \hat{y}\in W^s_{loc}(\hat{z}) \text{ with } \hat{z}\in\mathcal{D}_{i,l}\\
            Id & \text{otherwise}.
           \end{matrix}
         \right.$$
Here $h^{s,k}$ is the projectivization of $H^{s,k}$ and hence it is an stable holonomy for $\mathbb{P}(F_{\hat{A}_k})$. By the definition of the sets $\mathcal{D}_{i,l}$ and Proposition \ref{existdebil}, we know that $\overline{h}^k$ is well defined.

Analogously, we define $\overline{h}$. By Proposition \ref{holonomia}, we infer that $\overline{h}^k_{\hat{y}}$ converges to $\overline{h}_{\hat{y}}$ uniformly on $\Sig$.

Consider $\overline{m}^k_{\hat{y}}=(\overline{h}^k_{\hat{y}})_*\hat{m}^k_{\hat{y}}$ and $\overline{m}_{\hat{y}}=(\overline{h}_{\hat{y}})_*\hat{m}_{\hat{y}}$ for every $\hat{y}\in \Sig$ and, $$\overline{m}^k=\int \overline{m}^k_{\hat{y}}\; d\hat{\mu}\quad \text{ and } \quad \overline{m}=\int \overline{m}_{\hat{y}}\; d\hat{\mu}.$$

We want to prove that $\overline{m}^k\to\overline{m}$ in the weak-$^*$ topology. Then, we have to show that $\int\varphi\, d\overline{m}^k\to\int\varphi\, d\overline{m}$ for every continuous and bounded function $\varphi\colon\Sig\times\mathbb{P}^1\to\R$.

By the definition of $\overline{m}^k$, we deduce that $$\int \varphi\, d\overline{m}^k=\int \int \varphi(\hat{y},\overline{h}^{k}_{\hat{y}}(v))\, d\hat{m}^k_{\hat{y}}(v)d\hat{\mu}.$$ Denote $\overline{\varphi}_k(\hat{y},v)=\varphi(\hat{y},\overline{h}^{k}_{\hat{y}}(v))$ then,
\begin{equation}\label{22}
\begin{aligned}
\left|\int \varphi\, d\overline{m}^k-\int\varphi\, d\overline{m}\right|&=\left|\int \overline{\varphi}_k\, d\hat{m}^k-\int\overline{\varphi}\, d\hat{m}\right|\\
&\leq\int\left| \overline{\varphi}_k-\overline{\varphi}\right|\, d\hat{m}^k+\left|\int \overline{\varphi}\, d\hat{m}^k-\int\overline{\varphi}\, d\hat{m}\right|.
\end{aligned}
\end{equation}

In order to bound the first term in the last line of Equation (\ref{22}), it is enough to observe that for every $k\in \mathbb{N}$, $$\overline{\varphi}_k(\hat{y},v)\leq\sup|\varphi_k(\hat{x},w)|\leq C,$$ and also that $\overline{\varphi}_k$ converges to $\overline{\varphi}$, because $\overline{h}^k_{\hat{y}}\to \overline{h}_{\hat{y}}$ uniformly. Therefore, given $\varepsilon>0$, there exists $k_0>0$ such that for every $k\geq k_0$, $$\int\left| \overline{\varphi}_k-\overline{\varphi}\right|\, d\hat{m}^k<\frac{\varepsilon}{2}.$$

Finally, in order to bound the second term in Equation (\ref{22}), we observe that $\overline{\varphi}(\hat{y},v)$ is measurable as a function of $\hat{y}$, thus there exist a continuous function $$\psi:\Sig\times\mathbb{P}^1\to\mathbb{R},$$ such that $\sup|\psi|\leq\sup|\varphi|$ and a compact set $K$ with $\mu(\Sig\setminus K)<\frac{\varepsilon}{4 \sup|\varphi|}$ such that $\psi(\hat{y},v)=\overline{\varphi}(\hat{y},v)$ for $(\hat{y},v)\in K\times\mathbb{P}^1$. As $\hat{m}^k\to \hat{m}$ in the weak-$^*$ topology, for $k\in \mathbb{N}$ large enough, we get that $$\left|\int \overline{\varphi}\, d\hat{m}^k-\int\overline{\varphi}\, d\hat{m}\right|\leq\left|\int_K \psi\, d\hat{m}^k-\int_K\psi\, d\hat{m}\right|+\frac{\varepsilon}{4}<\frac{\varepsilon}{2},$$
which concludes the proof of $\overline{m}^k\to\overline{m}$.

Next we show that $\hat{y}\mapsto \overline{m}^k_{\hat{y}}$ is $\mathcal{B}^l$-measurable and as $\overline{m}^k\to\overline{m}$ weakly-$^*$, this implies that $\hat{y}\mapsto\overline{m}_{\hat{y}}$ is also $\mathcal{B}^l$-measurable. 

\begin{claim}
If $\hat{m}^k$ is an $s$-state, then $\hat{y}\mapsto \overline{m}^k_{\hat{y}}$ is $\mathcal{B}^l$-measurable mod $0$ for every $l\geq l_0$.
\end{claim}
\textit{Proof.}
Fix $l\ge l_0$. We need to proof that given any continuous and bounded function $\varphi:\mathbb{P}^1\to\mathbb{R}$, the map $\Phi_k:\Sig\to\R$, defined as $\hat{y}\mapsto \Phi_k(\hat{y})=\int\varphi\, d\overline{m}^k_{\hat{y}}$, is $\mathcal{B}^l$-measurable mod $0$.

Because $\hat{m}^k$ is an $s$-state, the definition of $\overline{m}^k$ guarantees that $$\Phi_k(\hat{y})=\int\varphi\, d\overline{m}^k_{\hat{y}}=\int\varphi\, d\hat{m}^k_{\hat{z}}$$ is constant for  every $\hat{y}\in W^s_{loc}(\hat{z})$, for $\hat{\mu}^u_{\hat{x}_i}$-almost every $\hat{z}\in\mathcal{D}_{i,l}$. Here $\hat{\mu}^u_{\hat{x}_i}$ denotes an element of the disintegration of $\hat{\mu}$ along its local unstable sets. See the definitions before Lemma \ref{lpe}. 

Let $E\subset\R$ be a measurable set. If there exists $\hat{y}\in W^s_{loc}(\hat{z})\cap\Phi_k^{-1}(E)$, then every $\hat{w}\in W^s_{loc}(\hat{z})$ satisfies that $\Phi_k(\hat{w})=\int\varphi\, d\overline{m}^k_{\hat{w}}=\int\varphi\, d\overline{m}^k_{\hat{y}}=\Phi_k(\hat{y})\in E$, concluding that $W^s_{loc}(\hat{z})\subset\Phi_k^{-1}(E)$. 

We have proved that $\Phi_k$ is $\mathcal{B}^l$-measurable, and then $\hat{y}\mapsto \overline{m}^k_{\hat{y}}$ is also $\mathcal{B}^l$-measurable. $\square$ 

\begin{claim}
If $\overline{m}^k\to\overline{m}$ in the weak-$^*$ topology and $\hat{y}\mapsto\overline{m}^k_{\hat{y}}$ is $\mathcal{B}^l$-measurable mod $0$, then $\hat{y}\mapsto\overline{m}_{\hat{y}}$ is $\mathcal{B}^l$-measurable mod $0$ for all $l\geq l_0$.
\end{claim}
\textit{Proof.}
First, we prove that $\hat{y}\mapsto\Phi_k(\hat{y})=\int\varphi(v)\, d\overline{m}^k_{\hat{y}}$ converges in the weak topology of $L^2(\hat{\mu})$ to $\Phi(\hat{y})=\int\varphi\, d\overline{m}_{\hat{y}}$. With that purpose, let $\psi:\Sig\to\R$ be a continuous bounded function  and observe that $$\int \psi\Phi_k\, d\hat{\mu}=\int \psi(\hat{y})\int\varphi(v)\, d\overline{m}^k_{\hat{y}}\, d\hat{\mu}=\int\int \psi(\hat{y})\varphi(v)\, d\overline{m}^k_{\hat{y}}\, d\hat{\mu}.$$

Because $\psi(\hat{y})\varphi(v)$ is also a continuous function and $\overline{m}^k\to \overline{m}$ in the weak-$^{*}$ topology, we deduce that $$\int \psi(\hat{y})\varphi(v)\, d\overline{m}^k\to\int \psi(\hat{y})\varphi(v)\, d\overline{m}=\int \psi\Phi\, d\hat{\mu}.$$ Then, as continuous bounded function are dense in $L^2(\hat{\mu})$, $\Phi_k$ converges weakly to $\Phi$.

By hypothesis, we know that $\Phi_k$ is $\mathcal{B}^l$ measurable mod $0$ for each $l\geq l_0$ and we have to prove that $\Phi$ also is. 

The space $K\subset L^2(\hat{\mu})$ of $\mathcal{B}^l-$measurable functions is convex and close. Then, if $\Phi\not\in K$, by the Hahn-Banach theorem, there exists $\xi\in L^2(\hat{\mu})$ such that $\int \xi \psi\, d\overline{m}=0$ for all $\psi\in K$ and $\int \xi\Phi\, d\hat{\mu}>0$. Since, $0=\int \xi\Phi_k\, d\hat{\mu}  \to \int \xi\Phi\, d\hat{\mu}$, we get a contradiction. 

Finally, we conclude that $\hat{y}\mapsto \Phi(\hat{y})=\int \varphi\, d\overline{m}_{\hat{y}}$ is $\mathcal{B}^l$-measurable mod $0$ for each $l\geq l_0$. $\square$ 

In order to finish the proof of the proposition, we prove that if $\hat{y}\mapsto \overline{m}_{\hat{y}}$ is $\mathcal{B}^l$-measurable for $l\geq l_0$, then $\hat{m}$ is an $s$-state.
  
Since $\hat{y}\to\overline{m}_{\hat{y}}$ is $\mathcal{B}^l$-measurable mod $0$, then for each $l\geq l_0$, there exists a full measure set $E^s_l$ of $\bigcup \{W^s_{loc}(\hat{z}): \ \hat{z}\in\mathcal{D}_{i,l}\}$ that verifies
\begin{equation*}
\hat{z}_1,\hat{z}_2\in E^s_l\cap W^s_{loc}(\hat{z}) \Rightarrow \overline{m}_{\hat{z}_1}=\overline{m}_{\hat{z}_2}, 
\end{equation*}
with $\hat{z}\in\mathcal{D}_{i,l}$, and thus $$(h^{s}_{\hat{z}_1,\hat{z}})_*\hat{m}_{\hat{z}_1}=(h^{s}_{\hat{z}_2,\hat{z}})_*\hat{m}_{\hat{z}_2}\Leftrightarrow (h^{s}_{\hat{z}_1,\hat{z}_2})_*\hat{m}_{\hat{z}_1}=\hat{m}_{\hat{z}_2}.$$
Then, there is a full measure set $E^s=\bigcup E^s_l$, satisfying that if $\hat{y}\in E^s$, then $\hat{y}\in E^s_l$ for some $l$, and as $W^s_{loc}(\hat{y})\subset E^s_l\mod 0$, we conclude that $(h^s_{\hat{z}_1,\hat{z}_2})_*\hat{m}_{\hat{z}_1}=\hat{m}_{\hat{z}_2}$ with $\hat{z}_1,\hat{z}_2\in W^s_{loc}(\hat{y})$ mod $0$. $\square$ 

\section{Characterization of discontinuity points}

We say that a linear cocycle $\hat{A}\in \mathcal{S}_1(\Sig,2)$ is a discontinuity point of the Lyapunov exponents, if there exists a sequence $\{\hat{A}_k\}_{k\in \mathbb{N}}$ such that $\hat{A}_k$ converges to $\hat{A}$ in the Lipschitz topology and $\lambda_{+}(\hat{A}_k)$ does not converges to $\lambda_{+}(\hat{A})$. 

In this section, we use $\mathbb{P}(\hat{A})$-invariant probabilities measures to provide a characterization of these discontinuity points. 

By the semi-continuity of $\lambda_+(\cdot)$ and $\lambda_{-}(\cdot)$, if $\hat{A}$ is a dis\-con\-ti\-nu\-i\-ty point of the Lyapunov exponents, then $\lambda_-(\hat{A})<0<\lambda_+(\hat{A})$. Let $\R^2=E^{s,\hat{A}}_{\hat{x}}\oplus E^{u,\hat{A}}_{\hat{x}}$ be the O\-se\-le\-dets decomposition associated to $\hat{A}$ at the point $\hat{x}\in\Sig$. Consider the measures in $\Sig\times\mathbb{P}^1$ defined by
\begin{equation}\label{msu}
\hat{m}^s=\int \delta_{\mathbb{P}(E^{s,\hat{A}}_{\hat{x}})}\, d\hat{\mu}\quad \text{ and }\quad \hat{m}^u=\int \delta_{\mathbb{P}(E^{u,\hat{A}}_{\hat{x}})}\, d\hat{\mu},
\end{equation}

They are both $\mathbb{P}(\hat{A})$-invariant probability measures projecting to $\hat{\mu}$ and moreover, by Birkhoff's Ergodic Theorem they satisfy, $$\lambda_{-}(\hat{A})= \int \Phi_{\hat{A}}(\hat{x},v)\, d\hat{m}^s\quad \text{ and }\quad \lambda_{+}(\hat{A})= \int \Phi_{\hat{A}}(\hat{x},v)\, d\hat{m}^u,$$
where $\Phi_{\hat{A}}(\hat{x},v)=\log \|\hat{A}(\hat{x})v\|.$ Moreover, if $\hat{A}$ admits invariant holonomies, then $\hat{m}^s$ is an $s$-state and $\hat{m}^u$ is a $u$-state. 

\begin{prop}[Lemma 6.1, \cite{AV}]\label{convex} Let $\hat{A}\in \mathcal{S}_1(\Sig,2)$ be such that $\lambda_{-}(\hat{A})<0<\lambda_{+}(\hat{A})$ and let $\hat{m}$ be a probability measure in $\Sig\times\mathbb{P}^1$ projecting to $\hat{\mu}$. Then $\hat{m}$ is $\mathbb{P}(\hat{A})$-invariant if and only if it is a convex combination of $\hat{m}^s$ and $\hat{m}^u$. \end{prop}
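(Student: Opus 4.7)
The plan has two directions. For the ``if'' direction: since the Oseledets subspaces satisfy $\hat{A}(\hat{x}) E^{\star,\hat{A}}_{\hat{x}} = E^{\star,\hat{A}}_{\hat{f}(\hat{x})}$ for $\star\in\{s,u\}$, both $\hat{m}^s$ and $\hat{m}^u$ are $\mathbb{P}(\hat{A})$-invariant by construction, and invariance is preserved under convex combinations. So there is nothing to prove here.

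For the ``only if'' direction, I would first reduce to ergodic $\hat{m}$ using the $\mathbb{P}(\hat{A})$-ergodic decomposition $\hat{m} = \int \hat{m}_t \, d\tau(t)$. Each $\pi_*\hat{m}_t$ is $\hat{f}$-ergodic, and $\int \pi_*\hat{m}_t \, d\tau(t) = \pi_* \hat{m} = \hat{\mu}$, so uniqueness of the $\hat{f}$-ergodic decomposition together with ergodicity of $\hat{\mu}$ forces $\pi_*\hat{m}_t = \hat{\mu}$ for $\tau$-a.e. $t$. Hence it suffices to show that every ergodic $\mathbb{P}(\hat{A})$-invariant probability measure projecting to $\hat{\mu}$ equals either $\hat{m}^s$ or $\hat{m}^u$.

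Fix such an ergodic $\hat{m}$. Apply the Birkhoff ergodic theorem to the bounded continuous function $\Phi_{\hat{A}}(\hat{x}, [v]) = \log(\|\hat{A}(\hat{x}) v\|/\|v\|)$: the limit $\lim_n \frac{1}{n}\log \|\hat{A}^n(\hat{x}) v\|$ exists $\hat{m}$-a.e. and is $\hat{m}$-a.e. equal to the constant $\int \Phi_{\hat{A}}\,d\hat{m}$. By Oseledets, this pointwise limit equals $\lambda_-(\hat{A})$ when $v \in E^{s,\hat{A}}_{\hat{x}}$ and $\lambda_+(\hat{A})$ otherwise, so exactly one of the two possibilities occurs on a full $\hat{m}$-measure set. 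In the $\lambda_-$ case, $[v] = \mathbb{P}(E^{s,\hat{A}}_{\hat{x}})$ $\hat{m}$-a.e., whence the disintegration of $\hat{m}$ over $\hat{\mu}$ must be $\hat{m}_{\hat{x}} = \delta_{\mathbb{P}(E^{s,\hat{A}}_{\hat{x}})}$ almost everywhere, and therefore $\hat{m} = \hat{m}^s$.

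The core step is the $\lambda_+$ case, where the goal is to prove $\hat{m} = \hat{m}^u$. My plan is to introduce the bounded measurable function $D(\hat{x}, [v]) = d_{\mathbb{P}^1}([v], \mathbb{P}(E^{u,\hat{A}}_{\hat{x}}))$, show $D\circ\mathbb{P}(F_{\hat{A}})^n\to 0$ pointwise $\hat{m}$-a.e., and then conclude by dominated convergence. For the pointwise convergence, $\hat{m}$-a.e. $(\hat{x},[v])$ satisfies $v\notin E^{s,\hat{A}}_{\hat{x}}$; writing $v = v^s + v^u$ with $v^u\neq 0$, Oseledets gives $\|\hat{A}^n(\hat{x})v^s\|/\|\hat{A}^n(\hat{x})v^u\| \to 0$, so $\mathbb{P}(\hat{A}^n(\hat{x}))[v]$ approaches $\mathbb{P}(E^{u,\hat{A}}_{\hat{f}^n(\hat{x})})$ in the projective metric, i.e., $D\circ\mathbb{P}(F_{\hat{A}})^n(\hat{x},[v])\to 0$. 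Using the $\mathbb{P}(\hat{A})$-invariance identity $\int D \, d\hat{m} = \int D\circ \mathbb{P}(F_{\hat{A}})^n \, d\hat{m}$ for every $n$, the boundedness of $D$, and dominated convergence, I obtain $\int D \, d\hat{m} = 0$; hence $[v] = \mathbb{P}(E^{u,\hat{A}}_{\hat{x}})$ $\hat{m}$-a.e., that is, $\hat{m} = \hat{m}^u$. The main obstacle I anticipate is precisely that the Oseledets splitting is only measurable, which blocks any continuity-based argument on fibers; the strategy above circumvents this by relying only on the pointwise convergence supplied by Oseledets and on the fact that $D$ is bounded and measurable, which is enough for the dominated convergence step.
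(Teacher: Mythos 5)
Your proof is correct and is essentially the rigorous implementation of the one-line justification the paper gives for this proposition (which it otherwise cites as Lemma 6.1 of \cite{AV}), namely the north--south dynamics on the fibers induced by the Oseledets splitting. The reduction to ergodic components, the Birkhoff dichotomy between $\lambda_-$ and $\lambda_+$, and the dominated-convergence argument with the bounded measurable function $D(\hat{x},[v])=d_{\mathbb{P}^1}([v],\mathbb{P}(E^{u,\hat{A}}_{\hat{x}}))$ are all sound.
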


Indeed, one only has to note that every compact subset of $\mathbb{P}^1$ disjoint from $\{\mathbb{P}(E^{s,\hat{A}}), \mathbb{P}(E^{u,\hat{A}})\}$ accumulates on $\mathbb{P}(E^{u,\hat{A}})$ in the future and on $\mathbb{P}(E^{s,\hat{A}})$ in the past. 

The following characterization of discontinuity points was firstly introduce for fiber-bunched cocycles in \cite{AV}. In our statement it is only required for the cocycle to be non-uniformly fiber-bunched. Even thought the proof is the same as in \cite{AV}, we introduce it in here because it exhibits where Proposition \ref{utosdebil} is needed.

\begin{prop}[Proposition 6.3, \cite{AV}]\label{discont} Let $\hat{A}\in \mathcal{S}_1(\Sig,2)$ be a non-uniformly fiber-bunched cocycle. If $\hat{A}$ is a dis\-con\-ti\-nu\-i\-ty point of the Lyapunov exponents, then every $\mathbb{P}(\hat{A})$-invariant probability measure $\hat{m}$ projecting to $\hat{\mu}$ is an $su$-state.\end{prop}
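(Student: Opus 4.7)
The plan is to exploit the discontinuity of $\lambda_+$ together with Proposition \ref{utosdebil} to produce a $u$-state for $\hat{A}$ whose integral against $\Phi_{\hat{A}}$ is strictly less than $\lambda_+(\hat{A})$, and then to use the convex structure of Proposition \ref{convex} to upgrade $\hat{m}^s$ and $\hat{m}^u$ to $su$-states. By Proposition \ref{convex}, any $\mathbb{P}(\hat{A})$-invariant probability projecting to $\hat{\mu}$ is a convex combination of $\hat{m}^s$ and $\hat{m}^u$, so it suffices to show that $\hat{m}^s$ is a $u$-state and that $\hat{m}^u$ is an $s$-state. (The existence of non-uniform invariant holonomies for $\hat{A}$ is given by Proposition \ref{existdebil}, and from the equivariance in Definition \ref{holonomiadebil}(b), iterated, together with the exponential characterisation of the Oseledets directions, one sees that $\mathbb{P}(E^{s,\hat{A}})$ is $h^s$-invariant and $\mathbb{P}(E^{u,\hat{A}})$ is $h^u$-invariant almost everywhere; hence $\hat{m}^s$ is automatically an $s$-state and $\hat{m}^u$ automatically a $u$-state.)

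To prove that $\hat{m}^s$ is a $u$-state, I would fix a sequence $\hat{A}_k\to\hat{A}$ in the Lipschitz topology witnessing the discontinuity. Upper semi-continuity of $\lambda_+$, after passage to a subsequence, gives $\lambda_+(\hat{A}_k)\to\lambda^*<\lambda_+(\hat{A})$. Since non-uniform fiber-bunching is a $C^0$-open condition, every $\hat{A}_k$ with $k$ large admits non-uniform invariant holonomies by Proposition \ref{existdebil}. For each such $k$ I choose a $u$-state $\hat{m}_k$ for $\mathbb{P}(\hat{A}_k)$ with $\int\Phi_{\hat{A}_k}\,d\hat{m}_k=\lambda_+(\hat{A}_k)$: if $\lambda_+(\hat{A}_k)>0$ take $\hat{m}_k=\hat{m}^u_k$ defined via the analogue of (\ref{msu}) for $\hat{A}_k$, and if $\lambda_+(\hat{A}_k)=0$ take any invariant measure projecting to $\hat{\mu}$, which is an $su$-state by Theorem \ref{invarianza}. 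Pass to a weak-$^*$ convergent subsequence $\hat{m}_k\to\hat{m}$. Then $\hat{m}$ is $\mathbb{P}(\hat{A})$-invariant, projects to $\hat{\mu}$, and by the $u$-state version of Proposition \ref{utosdebil} it is a $u$-state for $\hat{A}$. Moreover, since $\hat{A}_k\to\hat{A}$ uniformly the functions $\Phi_{\hat{A}_k}$ converge uniformly to $\Phi_{\hat{A}}$, so $\int\Phi_{\hat{A}}\,d\hat{m}=\lim\int\Phi_{\hat{A}_k}\,d\hat{m}_k=\lambda^*<\lambda_+(\hat{A})$.

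Now Proposition \ref{convex} yields $\hat{m}=t\hat{m}^s+(1-t)\hat{m}^u$ for some $t\in[0,1]$, and the identity $t\lambda_-(\hat{A})+(1-t)\lambda_+(\hat{A})=\lambda^*<\lambda_+(\hat{A})$, together with $\lambda_-(\hat{A})<\lambda_+(\hat{A})$, forces $t>0$. Disintegrating, $\hat{m}_{\hat{x}}=t\,\delta_{\mathbb{P}(E^{s,\hat{A}}_{\hat{x}})}+(1-t)\,\delta_{\mathbb{P}(E^{u,\hat{A}}_{\hat{x}})}$ almost surely. Because $\mathbb{P}(E^{u,\hat{A}})$ is already preserved by $h^u$ and $t>0$, the $u$-state equation $(h^u_{\hat{x},\hat{y}})_*\hat{m}_{\hat{x}}=\hat{m}_{\hat{y}}$ forces $h^u_{\hat{x},\hat{y}}\mathbb{P}(E^{s,\hat{A}}_{\hat{x}})=\mathbb{P}(E^{s,\hat{A}}_{\hat{y}})$ on a full measure set of pairs in the same local unstable set, which is exactly the statement that $\hat{m}^s$ is a $u$-state. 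A completely symmetric argument, starting from $s$-states for $\hat{A}_k$ and using $\lambda_-(\hat{A}_k)\to-\lambda^*>\lambda_-(\hat{A})$ together with Proposition \ref{utosdebil}, shows that $\hat{m}^u$ is an $s$-state.

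The principal obstacle is the limiting step: one must know that a weak-$^*$ limit of $u$-states for $\hat{A}_k$ remains a $u$-state for $\hat{A}$ despite the fact that the relevant holonomies live only on the non-uniform unstable holonomy blocks $\mathcal{D}^u_{\hat{A}_k,l}$, whose geometry varies with $k$. This is precisely what Proposition \ref{utosdebil} was designed to achieve, and once it is in hand the remaining manipulations with Proposition \ref{convex} and with the holonomy-invariance of the Oseledets splitting are routine.
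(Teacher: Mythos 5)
Your proposal is correct and follows essentially the same route as the paper: pass to a subsequence where $\lambda_+(\hat{A}_k)$ converges to a limit strictly below $\lambda_+(\hat{A})$, pick $u$-states $\hat{m}_k$ realizing $\lambda_+(\hat{A}_k)$ (the Oseledets measure when $\lambda_+(\hat{A}_k)>0$, any invariant measure via Theorem \ref{invarianza} when it vanishes), take a weak-$^*$ limit, invoke Proposition \ref{utosdebil} and Proposition \ref{convex}, and use the integral identity to force a positive $\hat{m}^s$-component, whence $\hat{m}^s$ is a $u$-state; the symmetric argument handles $\hat{m}^u$. The only cosmetic difference is that you extract the $u$-invariance of $\mathbb{P}(E^{s,\hat{A}})$ directly from the disintegration, where the paper writes $\hat{m}^s=\tfrac{1}{1-\alpha}(\hat{m}-\alpha\hat{m}^u)$ — the same idea.
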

\textit{Proof.}
By the upper semi-continuity of $\lambda_{+}(\cdot)$, passing to a subsequence we may assume $\lim_{k\to\infty}\lambda_{+}(\hat{A}_k)<\lambda_{+}(\hat{A})$. For each $k\in \mathbb{N}$, there exists an ergodic $\mathbb{P}(F_{\hat{A}_k})$-invariant probability measure $\hat{m}^k$ projecting to $\hat{\mu}$ such that
\begin{equation}\label{ident}
\lambda_{+}(\hat{A}_k)=\int \Phi_{\hat{A}_k}(\hat{x},v)\, d\hat{m}^k,
\end{equation}
for $\Phi_{\hat{A}_k}(\hat{x},v)=\log \|\hat{A}_k(\hat{x})v\|.$ If $\lambda_{+}(\hat{A}_k)=0$, then any $\mathbb{P}(\hat{A}_k)$-invariant probability measure $\hat{m}^k$ projecting to $\hat{\mu}$ satisfies Equation (\ref{ident}) and by Theorem \ref{invarianza} is an $su$-state. If $\lambda_{+}(\hat{A}_k)> 0$, we take $\hat{m}^k=\int \delta_{\mathbb{P}(E^{u,k}_{\hat{x}})}\, d\hat{\mu}$, then it satisfies Equation (\ref{ident}) and it is a $u$-state. Consequently, $$\lim_{k\to\infty}\int \Phi_{\hat{A}_k}(\hat{x},v)\, d\hat{m}^k<\lambda_{+}(\hat{A}).$$

Taking sub-sequences again, we may assume that $\hat{m}^k$ converges weak-$^{*}$ to a $\mathbb{P}(F_{\hat{A}})$-invariant probability measure $\hat{m}$. By Proposition \ref{utosdebil}, $\hat{m}$ is a $u$-state and by Proposition \ref{convex}, there exists $\alpha\in [0,1]$ such that $$\hat{m}=\alpha \hat{m}^u+(1-\alpha) \hat{m}^s.$$

Finally, by the uniform convergence of $\Phi_{\hat{A}_k}\to\Phi_{\hat{A}}$, $$\int \Phi_{\hat{A}}(\hat{x},v)\, d\hat{m}=\lim_{k\to\infty}\int \Phi_{\hat{A}_k}(\hat{x},v)\, d\hat{m}^k<\lambda_{+}(A,\hat{\mu})=\int \Phi_{\hat{A}}(\hat{x},v)\, d\hat{m}^u,$$ hence $\hat{m}\neq \hat{m}^u$. It follows that $\alpha\neq 1$ and $$\hat{m}^s=\frac{1}{1-\alpha}\left(\hat{m}-\alpha \hat{m}^u\right)$$ is a $u$-state and therefore an $su$-state. 

Analogously, using $\lambda_{-}(\hat{A})$, we conclude that $\hat{m}^u$ is an $s$-state. Then, Proposition \ref{convex} concludes the statement. $\square$ 

\section{Measures induced by $u$-states}

Recall that $\Sigma^u=\{(x_n)_{n\geq0}: q_{x_nx_{n+1}}=1 \text{ for every } n\geq 0\}$ is the set of sequences with only positive coordinates, $P^u\colon \Sig\to\Sigma^u$ is the projection and for $x\in \Sigma^u$, $W^s_{loc}(x)=(P^u)^{-1}(x)$.

We introduce a type of measures on $\Sigma^u\times\mathbb{P}^1$ that are induced by measures in $\Sig\times\mathbb{P}^1$. For these measures it is possible to identify some geometric properties that are enunciated next.

\begin{defn}\label{induced} A probability measure $m$ on $\Sigma^u\times\mathbb{P}^1$ is induced by a $u$-state if there exist
\begin{enumerate}[label=\emph{(\roman*)}]
\item  a continuous linear cocycle $\hat{A}:\Sig\to SL(2,\R)$ that is constant along local stable sets and admits a unstable holonomy $H^u$,
\item and a $\mathbb{P}(\hat{A})$-invariant probability measure $\hat{m}$ on $\Sig\times\mathbb{P}^1$ projecting to $\hat{\mu}$ such that $\hat{m}$ is a $u$-state for $h^u=\mathbb{P}(H^u)$ and $m=(P^u\times Id)_*\hat{m}$.
\end{enumerate}
\end{defn}

Note that $m=(P^u\times Id)_*\hat{m}$ is a $\mathbb{P}(A)$-invariant measure where $A\colon \Sigma^u\to SL(2,\R)$ is such that $\hat{A}=A\circ P^u$. 

If $\{\hat{m}_{\hat{x}}\}_{\hat{x}\in\Sig}$ is a disintegration of $\hat{m}$ along the fibers and $\{\hat{\mu}_x\}_{x\in\Sigma^u}$ is a disintegration of $\hat{\mu}$ as in Lemma \ref{lpe}, then for $x\in\Sigma^u$
\begin{equation*}\label{projectm}
m_x=\int_{W^s_{loc}(x)}\hat{m}_{\hat{x}}d\hat{\mu}_x(\hat{x})
\end{equation*}
is a disintegration of $m$ along the fibers of $\Sigma^u\times\mathbb{P}^1$.

We remark that the unstable holonomy in item (i) of Definition \ref{induced} can be either uniform as in Definition \ref{holonomiafuerte} or non-uniform as in Definition \ref{holonomiadebil}. The first case has been studied in Section 4.3 of \cite{BBB}. In the following we focus on the second case. Notice that we are only asking for the holonomy to satisfy Definition \ref{holonomiadebil}, we do not required for the cocycle to be non-uniformly fiber-bunched neither the holonomy to be given by Proposition \ref{existdebil}. 

\begin{prop}\label{unou}
Any probability measure $m$ induced by a $u$-state admits a disintegration into conditional measures $\{m_x\}_{x\in\Sigma^u}$ that are defined for every $x\in\Sigma^u$ and vary continuously with $x$ in the weak-$^*$ topology.
\end{prop}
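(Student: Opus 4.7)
The plan is to use the $u$-state property together with Lemma \ref{lpe} to rewrite the a.e.-defined disintegration $\hat y\mapsto \hat m_{\hat y}$ as an integral over a single, fixed reference stable set, and then read continuity directly off this rewritten formula. Fix a cylinder $[0;i]$ and choose a reference point $x_i\in P^u([0;i])$ such that for $\hat\mu_{x_i}$-a.e.\ $\hat x\in W^s_{loc}(x_i)$ we have $\hat x\in M^u$ (the full-measure set in Definition \ref{holonomiadebil}) and $\hat m_{\hat x}$ is well defined. Such an $x_i$ exists by Fubini, using the local product structure of $\hat\mu$. For every $y\in P^u([0;i])$ define
\begin{equation*}
m_y := \int_{W^s_{loc}(x_i)} \bigl(h^u_{\hat x,\,h_{x_i,y}(\hat x)}\bigr)_*\hat m_{\hat x}\; R_{x_i,y}(\hat x)\, d\hat\mu_{x_i}(\hat x),
\end{equation*}
where $h_{x_i,y}$ is the base unstable holonomy and $R_{x_i,y}$ its Jacobian from Lemma \ref{lpe}. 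The change of variables $\hat y=h_{x_i,y}(\hat x)$ together with the $u$-state identity $\hat m_{h_{x_i,y}(\hat x)}=(h^u_{\hat x,h_{x_i,y}(\hat x)})_*\hat m_{\hat x}$ shows that for $\mu^u$-a.e.\ $y$ the formula above coincides with $\int_{W^s_{loc}(y)}\hat m_{\hat y}\,d\hat\mu_y(\hat y)$, so it provides a disintegration of $m$ on each cylinder that is now defined for every $y$.

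For continuity, fix $\varphi\in C(\mathbb P^1)$ and $y_k\to y$ in $P^u([0;i])$. The integrand
\begin{equation*}
\hat x\;\longmapsto\; R_{x_i,y_k}(\hat x)\int\varphi\bigl(h^u_{\hat x,h_{x_i,y_k}(\hat x)}(v)\bigr)\,d\hat m_{\hat x}(v)
\end{equation*}
is uniformly bounded in $k$, because $R_{x_i,\cdot}$ is jointly continuous in a compact range of parameters and $\varphi$ is bounded. For $\hat\mu_{x_i}$-a.e.\ $\hat x\in M^u$ we have $\hat x\in\mathcal D^u_l$ for some $l$; item (c) of Definition \ref{holonomiadebil}, applied with the first variable frozen at $\hat x\in\mathcal D^u_l$, yields continuity of $\hat z\mapsto h^u_{\hat x,\hat z}$ on $W^u_{loc}(\hat x)$. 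Since $h_{x_i,y_k}(\hat x)\to h_{x_i,y}(\hat x)$ in $W^u_{loc}(\hat x)$ and $R_{x_i,y_k}(\hat x)\to R_{x_i,y}(\hat x)$, the integrand converges pointwise $\hat\mu_{x_i}$-a.e.\ to its value at $y$. Dominated convergence gives $\int\varphi\,dm_{y_k}\to\int\varphi\,dm_y$, proving weak-$^*$ continuity of $y\mapsto m_y$ throughout each cylinder, and hence throughout $\Sigma^u$.

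The main obstacle is that $h^u$ is only a \emph{non-uniform} holonomy: its continuity is piecewise, restricted to the blocks $\mathcal D^u_l$, with no quantitative control across different blocks. The formula chosen above is designed precisely to circumvent this defect: averaging against $\hat\mu_{x_i}$ lets us fix $\hat x$ $\hat\mu_{x_i}$-a.e.\ inside some block, placing us in the regime where Definition \ref{holonomiadebil}(c) supplies continuity, while the uniform bound coming from the continuous Jacobian $R_{x_i,y}$ is then enough to pass to the limit via dominated convergence.
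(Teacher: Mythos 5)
Your proof is correct, and it reaches the conclusion by a genuinely different route in the key technical step. Both arguments share the same skeleton — rewrite $\int\varphi\,dm_y$ via the base holonomy $h_{x,y}$, its Jacobian $R_{x,y}$ from Lemma \ref{lpe}, and the $u$-state identity — but they handle the non-uniformity of $h^u$ differently. The paper compares $m_x$ and $m_y$ for nearby $x,y$ directly and splits $W^s_{loc}(x)$ into the set $\mathcal{C}_{i,l}$ of unstable leaves through a holonomy block $\mathcal{D}^u_l$ (where item (c) of Definition \ref{holonomiadebil} gives a \emph{uniform} estimate $\|h^u_{\hat{x},\hat{y}}-Id\|$ small for $d(x,y)$ small) and a remainder of measure less than $1/l$, contributing at most $2\sup|\varphi|/l$; this yields an explicit modulus of continuity. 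You instead anchor everything at a single reference leaf $W^s_{loc}(x_i)$ and invoke dominated convergence, using the continuity of $\hat{z}\mapsto h^u_{\hat{x},\hat{z}}$ pointwise at each a.e.\ $\hat{x}$ (each sitting in its own block, with no control on the block index); together with the uniform bound on the Jacobian this legitimately gives weak-$^*$ continuity, and since each cylinder of $\Sigma^u$ is compact, continuity is automatically uniform, so the statement of the proposition is fully proved. What your softer argument does not buy is the quantitative, block-by-block estimate of the paper, which is precisely what is reused afterwards to get the equicontinuity \emph{uniform in $k$} of Proposition \ref{47} and the uniform convergence $m^k_x\to m_x$ of Proposition \ref{uniform}; a dominated-convergence argument gives no such uniformity over a sequence of cocycles. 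One small point to make explicit: deducing continuity of $\hat{z}\mapsto h^u_{\hat{x},\hat{z}}$ on all of $W^u_{loc}(\hat{x})$ from item (c) also uses the cocycle property (a), writing $H^u_{\hat{x},\hat{z}}=H^u_{\hat{z}_0,\hat{z}}\circ H^u_{\hat{x},\hat{z}_0}$ with $H^u_{\hat{z}_0,\hat{z}}\to Id$, since item (c) as quantified in the paper only controls pairs close to the diagonal.
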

\textit{Proof.}
Let $l$ be large enough such that the unstable holonomy block $D^u_l$ in Definition \ref{holonomiadebil} satisfies $\mathcal{D}^u_{l}\cap[0;i]\neq\emptyset$ for every $i\in\{1,\ldots,d\}$. 

For every $i\in\{1,\ldots,d\}$, fix $\hat{x}_i\in[0;i]$ and define $$\mathcal{D}_{i,l}=\{\hat{z}\in W^s_{loc}(\hat{x}_i): \ W^u_{loc}(\hat{z})\cap\mathcal{D}^u_{l}\neq\emptyset\},$$
and $$\mathcal{C}_{i,l}=\bigcup\{W^u_{loc}(\hat{z}): \ \hat{z}\in\mathcal{D}_{i,l}\}.$$ Observe that the sets $C_{i,l}$ do not depend of the choice of $\hat{x}_i$ and $\mathcal{D}^u_{l}\cap[0;i]\subset\mathcal{C}_{i,l}$.

In the following, $\hat{\mu}_{x}$ denotes the disintegration of $\hat{\mu}$ along $W^s_{loc}(x)$, $x\in \Sigma^u$, as in Lemma \ref{lpe}.

Let $\varphi:\mathbb{P}^1\to\R$ be a continuous function and consider $x,y\in\Sigma^u$ in the same cylinder $[0;i]$, then 
\begin{equation*}
\begin{aligned}
&\int_{\mathbb{P}^1}\varphi(v)dm_y=\int_{W^s_{loc}(y)}\int_{\mathbb{P}^1}\varphi d\hat{m}_{\hat{y}}d\hat{\mu}_y(\hat{y})\\
=&\int_{W^s_{loc}(x)\cap\mathcal{C}_{i,l}}\left(\int_{\mathbb{P}^1}\varphi\circ h^u_{\hat{x},\hat{y}} d\hat{m}_{\hat{x}}\right)R_{x,y}(\hat{x})d\hat{\mu}_x(\hat{x})+\int_{W^s_{loc}(y)\cap\mathcal{C}^c_{i,l}}\int_{\mathbb{P}^1}\varphi d\hat{m}_{\hat{y}}d\hat{\mu}_y(\hat{y}).\\
\end{aligned}
\end{equation*}
As $$\hat{\mu}_y({W^s_{loc}(y)\cap \mathcal{C}^c_{i,l}})=\int_{W^s_{loc}(y)\cap \mathcal{C}^c_{i,l}}\psi(\hat{y})d\mu^s=\hat{\mu}(\mathcal{C}^c_{i,l})<\frac{1}{l},$$we deduce that,
\begin{equation*}
\begin{aligned}
\left|\int_{\mathbb{P}^1}\varphi dm_y-\int_{\mathbb{P}^1}\varphi dm_x\right|=&\int_{W^s_{loc}(x)\cap\mathcal{C}_{i,l}}\int_{\mathbb{P}^1}\left| \varphi\circ h^u_{\hat{x},\hat{y}}\cdot R_{x,y}(\hat{x})-\varphi\right| d\hat{m}_{\hat{x}}d\hat{\mu}_x(\hat{x})\\
+&\int_{W^s_{loc}(x)\cap \mathcal{C}^c_{i,l}}\int_{\mathbb{P}^1}|\varphi|d\hat{m}_{\hat{x}}d\hat{\mu}_x(\hat{x})\\
+&\int_{W^s_{loc}(y)\cap \mathcal{C}^c_{i,l}}\int_{\mathbb{P}^1}|\varphi|d\hat{m}_{\hat{y}}d\hat{\mu}_y(\hat{y}).\\
\leq&\int_{W^s_{loc}(x)\cap\mathcal{C}_{i,l}}\int_{\mathbb{P}^1}\left| \varphi\circ h^u_{\hat{x},\hat{y}}\cdot R_{x,y}(\hat{x})-\varphi\right| d\hat{m}_{\hat{x}}d\hat{\mu}_x(\hat{x})+\frac{2C}{l},
\end{aligned}
\end{equation*}
where $C=\sup|\varphi|$. 

By item (c) in Definition \ref{holonomiadebil}, we know that $\|h^u_{\hat{x},\hat{y}}-Id\|$ is uniformly small when $\hat{x},\hat{y}$ are close, $\hat{x},\hat{y}\in W^u_{loc}(\hat{z})$ and $\hat{z}\in\mathcal{C}_{i,l}$. Then, let $\varepsilon>0$ and set $l$ such that $\frac{4\sup|\varphi|}{\varepsilon}<l$, by Lemma \ref{lpe}, we can choose $\delta>0$, such that if $d(x,y)<\delta$, then the expression $\|R_{x,y}-1\|_{L^1}$ is small and thus $\|\varphi\circ h^u_{x,y}\cdot R_{x,y}(\hat{x})-\varphi\|<\frac{\varepsilon}{2}$, concluding that $\left|\int \varphi dm_x-\int \varphi dm_y\right|<\varepsilon$. $\square$ 

We want to apply this proposition to cocycles that satisfy the hypotheses of Theorem A. Therefore, let $\hat{A}\in \mathcal{S}_{1}(\Sig,2)$ be a non-uniformly fiber-bunched cocycle which admits a uniform stable holonomy. 

The following construction is due to Corollary 1.15 of \cite{BGV}.

Fix $d$ points $\hat{z}_1,\ldots\hat{z}_d$ such that $\hat{z}_i\in[0;i]$ for every $i\in \{1,...,d\}$. For each $\hat{x}\in [0;i]$, let $g(\hat{x})$ be the unique point in the intersection $W^u_{loc}(\hat{z}_i)\cap W^s_{loc}(\hat{x})$. Note that $g(\hat{x})=g(\hat{y})$ if $\hat{y}\in  W^s_{loc}(\hat{x})$. Define
\begin{equation}\label{b2}
\widetilde{A}(\hat{x})=H^{s}_{f(\hat{x}),g(f(\hat{x}))}\circ \hat{A}(\hat{x})\circ H^{s}_{g(\hat{x}),\hat{x}}.
\end{equation}

By Equation (\ref{b2}) and item (b) in Definition \ref{holonomiafuerte}, we conclude that $\widetilde{A}$ is constant along local stable sets.  
 
Since $\hat{A}$ is a non-uniformly fiber-bunched cocycle, Proposition \ref{existdebil} implies the existence of a non-uniform unstable holonomy $H^u$ for $\hat{A}$. 

Let $\widetilde{A}$ be the cocycle defined by Equation (\ref{b2}). We claim that $\widetilde{A}$ admits a non-uniform unstable holonomy $\widetilde{H}^u$. In order to prove this, we consider the $\hat{\mu}$-full measure set $M^u$ in Definition \ref{holonomiadebil} and $\hat{y}, \hat{z}\in W^u_{loc}(\hat{x})$ with $\hat{x}\in M^u$, then define $$\widetilde{H}^u_{\hat{y},\hat{z}}=H^s_{\hat{z},g(\hat{z})}\circ H^u_{\hat{y},\hat{z}} \circ H^s_{g(\hat{y}),\hat{y}},$$ where $H^s$ denotes the uniform stable holonomy of $\hat{A}$ and $g(\cdot)$ has been defined above. Notice that $\widetilde{H}^u$ verifies item (a)-(c) in Definition \ref{holonomiadebil}.

Given $\hat{m}$ a $\mathbb{P}(\hat{A})$-invariant probability measure projecting to $\hat{\mu}$, we construct a new measure $\widetilde{m}$ which is a $\mathbb{P}(\widetilde{A})$-invariant probability measure also projecting to $\hat{\mu}$. Let $\{\hat{m}_{\hat{x}}\}$ be a disintegration of $\hat{m}$, define,
\begin{equation}\label{tutia2}
\widetilde{m}_{\hat{x}}= \left(h^{s}_{\hat{x},g(\hat{x})}\right)_*\hat{m}_{\hat{x}} \quad \text{ and } \quad \widetilde{m}=\int \widetilde{m}_{\hat{x}}\; d\hat{\mu}.
\end{equation} 
Here $h^s=\mathbb{P}(H^s)$.

Observe that if $\hat{m}$ is a $u$-state for $(\hat{A}, H^u)$, then $\widetilde{m}$ is a $u$-state for $(\widetilde{A},\widetilde{H}^u)$. 

If $(\hat{A}_k, H^{s,k})\to (\hat{A},H^s)$ in $\mathcal{H}^s$ and $\widetilde{A}_k$ denotes the cocycle defined by Equation (\ref{b2}) applied to $\hat{A}_k$ and $H^{s,k}$, then $\widetilde{A}_k\to \widetilde{A}$ in the $C^0$ topology and $\widetilde{A}_k$ admits a non-uniform unstable holonomy $\widetilde{H}^{u,k}$ for every $k\in \mathbb{N}$. By Proposition \ref{holonomia}, we know that fixed $l\in \mathbb{N}$, there exists $k_l\in \mathbb{N}$ such that the continuity in item (c) of Definition \ref{holonomiadebil} can be taken to be uniform for $\widetilde{H}^u$ and $\widetilde{H}^{u,k}$ for every $k\geq k_l$. 

Let $\hat{m}^k$ be $u$-states for $\hat{A}_k$ such that $\hat{m}^k\to \hat{m}$ in the weak-$^{*}$ topology. Then, by Proposition \ref{utosdebil}, $\hat{m}$ is a $u$-state for $\hat{A}$. Define $\widetilde{m}^k$ and $\widetilde{m}$ by Equation (\ref{tutia2}) applied to $h^s$ and $h^{s,k}$ respectively. 

Consider $m^k=(P^u\times Id)_*\widetilde{m}^k$ and $m=(P^u\times Id)_*\widetilde{m}$. Observe that $m^k$ and $m$ are measures induced by $u$-states as in Definition \ref{induced}. Therefore, Proposition \ref{unou} holds for $m$ and for every $m^k$, $k\in \mathbb{N}$. Moreover, the observation above about the equicontinuity of the holonomies $\widetilde{H}^{u,k}$ allow us to conclude the following result.

\begin{prop}\label{47} The measures $m^k$ and $m$ admit disintegrations $\{m^k_x\}_{x\in\Sigma^u}$ and $\{m_x\}_{x\in\Sigma^u}$, respectively, which are defined for every $x\in\Sigma^u$ and such that for every continuous function $\varphi:\mathbb{P}^1\to\R$ and $\varepsilon>0$ there exists $\delta>0$ and $k_l\in \mathbb{N}$ such that $d(x,y)<\delta$ implies $|\int \varphi dm_x-\int \varphi dm_y|<\varepsilon$ and $|\int \varphi dm^k_x-\int \varphi dm^k_y|<\varepsilon$ for every $k\geq k_l$.\end{prop}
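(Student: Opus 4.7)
The plan is to repeat the argument of Proposition \ref{unou} simultaneously for $m$ and for all the $m^k$ with $k$ large, and check that every estimate can be made uniform in $k$. First I would fix $l\in\N$ large enough so that the unstable holonomy block $\mathcal{D}^u_{\widetilde{A},l}$ meets every cylinder $[0;i]$ and satisfies $\hat\mu(\Sig\setminus\mathcal{D}^u_{\widetilde{A},l})<1/l$. By Proposition \ref{holonomia} applied to the sequence $\widetilde{A}_k\to\widetilde{A}$, there exists $k_l\in\N$ such that for every $k\geq k_l$ the non-uniform unstable holonomies $\widetilde H^{u,k}$ of $\widetilde A_k$ are defined on this same block, and moreover $\widetilde H^{u,k}\to\widetilde H^u$ uniformly on $\mathcal{D}^u_{\widetilde A,l}$. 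In particular the Lipschitz-type bound $\|\widetilde H^{u,k}_{\hat y,\hat z}-Id\|\leq C_l\,d(\hat y,\hat z)$ from Proposition \ref{exist} holds with a constant $C_l$ that can be taken independent of $k\geq k_l$, because the Lipschitz constants of $\widetilde A_k$ are uniformly bounded on tails.

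With this in hand, fix a continuous test function $\varphi\colon\mathbb P^1\to\R$, a cylinder $[0;i]$ and points $x,y\in\Sigma^u\cap[0;i]$. Letting $\mathcal{C}_{i,l}$ be the saturation of $\mathcal{D}^u_{\widetilde A,l}\cap W^s_{loc}(\hat x_i)$ by local unstable sets (as in Proposition \ref{unou}), the local product structure of $\hat\mu$ together with the definition of the induced disintegration yields
\begin{equation*}
\left|\int\varphi\,dm_x-\int\varphi\,dm_y\right|\leq \int_{W^s_{loc}(x)\cap\mathcal{C}_{i,l}}\int_{\mathbb P^1}\bigl|\varphi\circ h^u_{\hat x,\hat y}\cdot R_{x,y}(\hat x)-\varphi\bigr|\,d\hat m_{\hat x}\,d\hat\mu_x+\frac{2\sup|\varphi|}{l},
\end{equation*}
and the analogous inequality for $m^k$ with $h^u$ replaced by $h^{u,k}$. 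Choosing $l$ with $4\sup|\varphi|/l<\varepsilon/2$ deals with the error term; for the first term, the uniform convergence of $\widetilde H^{u,k}$ on $\mathcal{D}^u_{\widetilde A,l}$ combined with item (c) of Definition \ref{holonomiadebil} and with the continuity of the Jacobian $R_{x,y}$ in $(x,y)$ provided by Lemma \ref{lpe} allows us to select $\delta>0$ (depending only on $l$ and $\varepsilon$, not on $k\geq k_l$) such that $d(x,y)<\delta$ forces the integrand to be pointwise less than $\varepsilon/(4\sup|\varphi|+1)$ on the relevant set.

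Combining these two bounds gives $|\int\varphi\,dm_x-\int\varphi\,dm_y|<\varepsilon$ and $|\int\varphi\,dm^k_x-\int\varphi\,dm^k_y|<\varepsilon$ for every $k\geq k_l$, which is precisely the equicontinuity statement. The result for $m$ alone is exactly Proposition \ref{unou}, so only the passage to a sequence requires new input.

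The main obstacle, and the reason I pause at the holonomy-convergence step, is the uniformity of the constant $C_l$: the proof of Proposition \ref{exist} produces $C_l$ in terms of the Lipschitz constant of the cocycle and of the parameter $\theta$ defining the block, and Proposition \ref{holonomia} shows both can be taken uniform in a Lipschitz neighborhood of $\widetilde A$. Since $\widetilde A_k\to\widetilde A$ in the $C^0$ topology is a consequence of $\hat A_k\to\hat A$ in $\mathcal{H}^s$ plus the formula (\ref{b2}), and the Lipschitz regularity is likewise preserved by that construction, this uniformity transfers to the family $\{\widetilde A_k\}$. Once this point is secured, the rest of the proof is a verbatim repetition of Proposition \ref{unou} with a parameter $k$ carried along.
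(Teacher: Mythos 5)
Your overall strategy is exactly the one the paper intends: Proposition \ref{47} is obtained by running the proof of Proposition \ref{unou} with the parameter $k$ carried along, and the only new input is that the relevant estimates (the measure of the complement of the holonomy block, the continuity of the Jacobian $R_{x,y}$, and the continuity of the unstable holonomy on the block) can be taken uniform in $k\geq k_l$. The first two are automatic since they do not involve the cocycle, and the third is the equicontinuity of the holonomies, which is where the paper also localizes the new content.

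There is, however, one step whose justification as written does not go through: you invoke Proposition \ref{holonomia} ``applied to the sequence $\widetilde{A}_k\to\widetilde{A}$''. Proposition \ref{holonomia} requires Lipschitz convergence of a Lipschitz cocycle, whereas $\widetilde{A}_k$ is built from $\hat{A}_k$ by conjugating with the uniform stable holonomies $H^{s,k}$ via Equation (\ref{b2}); these holonomies are only assumed continuous and converge only in $C^0$, so $\widetilde{A}_k$ is in general merely continuous and $\widetilde{A}_k\to\widetilde{A}$ only in $C^0$. Your claim that ``the Lipschitz regularity is likewise preserved by that construction'' is not justified and is false in general. The repair is the one already set up in Section 7 of the paper: the non-uniform unstable holonomy of $\widetilde{A}_k$ is \emph{defined} as the composition $\widetilde{H}^{u,k}_{\hat{y},\hat{z}}=H^{s,k}_{\hat{z},g(\hat{z})}\circ H^{u,k}_{\hat{y},\hat{z}}\circ H^{s,k}_{g(\hat{y}),\hat{y}}$, where $H^{u,k}$ is the non-uniform unstable holonomy of $\hat{A}_k$ itself. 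One then applies Proposition \ref{holonomia} to $(\hat{f}^{-1},\hat{A}_k^{-1})\to(\hat{f}^{-1},\hat{A}^{-1})$, which \emph{is} a Lipschitz-convergent sequence, to get uniform convergence and a $k$-independent Lipschitz bound for $H^{u,k}$ on the blocks $\mathcal{D}^u_{\hat{A},l}$, and combines this with the uniform convergence $H^{s,k}\to H^s$ on the compact set $\Omega^s$ (which gives equicontinuity of the tail of that family). With the equicontinuity of $\widetilde{H}^{u,k}$ obtained this way, the rest of your argument is correct and coincides with the paper's.
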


The next proposition is a consequence of Proposition \ref{47} and the fact that $\widetilde{m}^k\to \widetilde{m}$ in the weak-$^{*}$ topology. The proof is analogous to the proof of Proposition 4.8 of \cite{BBB}. 

\begin{prop}\label{uniform} Let $\hat{A}\in \mathcal{S}_{1}(\Sig,2)$ be a non-uniformly fiber-bunched cocycle which admits a uniform stable holonomy. Assume $(\hat{A}_k, H^{s,k})\to (\hat{A},H^s)$ in $\mathcal{H}^s$ and $\hat{m}^k$ are $u$-states for $\hat{A}$ such that $\hat{m}^k \to \hat{m}$ in the weak-$^{*}$ topology.

If $\widetilde{m}^k$ and $ \widetilde{m}$ has been defined by Equation (\ref{tutia2}), $m^k=(P^u\times Id)_*\widetilde{m}^k$, $m=(P^u\times Id)_*\widetilde{m}$ and $\{m^k_x\}$ and $\{m_x\}$ denote the disintegrations of $m^k$ and $m$ given by Proposition \ref{47}, respectively, then $m_x^k\to m_x$ uniformly on x.\end{prop}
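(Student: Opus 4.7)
The plan is to combine weak-$^*$ convergence of the pushed-down measures on $\Sigma^u\times\mathbb{P}^1$ with the equicontinuity of the disintegrations provided by Proposition \ref{47}, and then to use an Arzel\`a--Ascoli type argument to upgrade $L^1$-type convergence (against $\hat{\mu}$) to uniform fiberwise convergence.

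First, I would verify that $\widetilde{m}^k\to\widetilde{m}$ in the weak-$^*$ topology on $\Sig\times\mathbb{P}^1$. Since $H^{s,k}\to H^s$ uniformly on $\Omega^s$ by hypothesis, the maps $(\hat{x},v)\mapsto(\hat{x},h^{s,k}_{\hat{x},g(\hat{x})}(v))$ converge uniformly to $(\hat{x},v)\mapsto(\hat{x},h^s_{\hat{x},g(\hat{x})}(v))$, so for any continuous bounded $\varphi$ on $\Sig\times\mathbb{P}^1$ one rewrites $\int\varphi\,d\widetilde{m}^k=\int \varphi(\hat{x},h^{s,k}_{\hat{x},g(\hat{x})}(v))\,d\hat{m}^k(\hat{x},v)$ and passes to the limit using $\hat{m}^k\to\hat{m}$ and the uniform convergence of the integrand. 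Pushing forward by $P^u\times\mathrm{Id}$ then gives $m^k\to m$ weak-$^*$ on $\Sigma^u\times\mathbb{P}^1$.

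Next, fix a continuous function $\varphi\colon\mathbb{P}^1\to\mathbb{R}$ and define
\begin{equation*}
\Phi^k(x)=\int_{\mathbb{P}^1}\varphi\,dm_x^k, \qquad \Phi(x)=\int_{\mathbb{P}^1}\varphi\,dm_x.
\end{equation*}
By Proposition \ref{47} applied with the same $\varphi$ and $\varepsilon$, the sequence $\{\Phi^k\}_{k\ge k_l}$ together with $\Phi$ is an equicontinuous family of continuous functions on the compact set $\Sigma^u$, and it is uniformly bounded by $\sup|\varphi|$. By Arzel\`a--Ascoli, every subsequence of $\{\Phi^k\}$ admits a further subsequence $\{\Phi^{k_j}\}$ converging uniformly to some continuous function $\Phi^\infty$. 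To identify this limit, test against any continuous $\psi\colon\Sigma^u\to\mathbb{R}$ and use Fubini together with the weak-$^*$ convergence $m^k\to m$ applied to the continuous test function $(x,v)\mapsto\psi(x)\varphi(v)$:
\begin{equation*}
\int\psi\,\Phi^\infty\,d\mu^u=\lim_{j}\int\psi\,\Phi^{k_j}\,d\mu^u=\lim_j\int\psi(x)\varphi(v)\,dm^{k_j}=\int\psi\,\Phi\,d\mu^u.
\end{equation*}
Hence $\Phi^\infty=\Phi$ $\mu^u$-almost everywhere, and since both functions are continuous and $\mu^u$ has full support (because $\hat{\mu}$ does), $\Phi^\infty=\Phi$ everywhere. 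Uniqueness of subsequential limits combined with Arzel\`a--Ascoli yields $\Phi^k\to\Phi$ uniformly on $\Sigma^u$.

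To finish, I would promote this to uniform weak-$^*$ convergence of the disintegrations. Choose a countable dense subset $\{\varphi_n\}\subset C(\mathbb{P}^1)$. The above applied to each $\varphi_n$ gives that $x\mapsto\int\varphi_n\,dm_x^k$ converges to $x\mapsto\int\varphi_n\,dm_x$ uniformly in $x$. A standard three-$\varepsilon$ argument, using density of $\{\varphi_n\}$ and the uniform bound $\sup|\varphi|$ for an arbitrary test function, yields $m_x^k\to m_x$ in the weak-$^*$ topology uniformly in $x\in\Sigma^u$. The subtle point, and the only real obstacle, is the equicontinuity step, which already relies on the uniform version of Proposition \ref{holonomia} packaged into Proposition \ref{47}; once that is in place, the rest is the Arzel\`a--Ascoli plus density argument described above.
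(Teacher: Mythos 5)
Your proposal is correct and follows essentially the same route the paper intends: the paper simply defers to Proposition 4.8 of \cite{BBB}, whose proof is exactly the combination you describe of weak-$^*$ convergence of $\widetilde{m}^k$ (hence of $m^k$), the equicontinuity supplied by Proposition \ref{47}, Arzel\`a--Ascoli, identification of subsequential limits via the disintegration formula, and a density argument in $C(\mathbb{P}^1)$. The only point worth flagging is that the $\delta$ and $k_l$ in Proposition \ref{47} depend on $\varepsilon$, so the finitely many initial terms $\Phi^k$ with $k<k_l$ must be absorbed using their individual uniform continuity before invoking Arzel\`a--Ascoli; this is routine and does not affect the argument.
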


\section{Proof of the Theorems}
\subsection{Proof of Theorem A.}

Recall that $\hat{f}$ is a sub-shift of finite type and $\hat{\mu}$ is an ergodic $\hat{f}$-invariant probability measure with local product structure and fully supported. 

\begin{theoA} Let $\hat{A}\in \mathcal{S}_{1}(\Sig,2)$ be a non-uniformly fiber-bunched cocycle which admits a uniform stable holonomy.  If $(\hat{A}_k, H^{s,k})\to (\hat{A},H^s)$ in $\mathcal{H}^s$, then $\lambda_{+}(\hat{A}_k)\to \lambda_{+}(\hat{A})$. 
\end{theoA}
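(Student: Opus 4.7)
The plan is to argue by contradiction, combining the characterization of discontinuity points from Section~6 with the uniform convergence of disintegrations from Section~7.

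Suppose $\hat{A}$ is a discontinuity point. Upper semi-continuity of $\lambda_+$ lets us pass to a subsequence with $\limsup_k \lambda_+(\hat{A}_k) < \lambda_+(\hat{A})$; in particular $\lambda_+(\hat{A}) > 0$, so the Oseledets measures $\hat{m}^s, \hat{m}^u$ defined in \eqref{msu} exist, and Proposition~\ref{discont} ensures that both of them are $su$-states for $\hat{A}$. For each $k$ with $\lambda_+(\hat{A}_k) > 0$ set $\hat{m}_k = \int \delta_{\mathbb{P}(E^{u,k}_{\hat{x}})}\, d\hat{\mu}$; this is a $\mathbb{P}(\hat{A}_k)$-invariant $u$-state projecting to $\hat{\mu}$ and realizing $\lambda_+(\hat{A}_k)$. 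Extract a weak-$\ast$ convergent subsequence $\hat{m}_k \to \hat{m}$. Uniform convergence $\Phi_{\hat{A}_k} \to \Phi_{\hat{A}}$ yields $\int \Phi_{\hat{A}}\, d\hat{m} = \lim_k \lambda_+(\hat{A}_k) < \lambda_+(\hat{A})$; by Proposition~\ref{utosdebil}, $\hat{m}$ is a $u$-state for $\hat{A}$, and by Proposition~\ref{convex} it decomposes as $\hat{m} = \alpha \hat{m}^u + (1-\alpha) \hat{m}^s$ with $\alpha \in [0,1)$ (strict, else the integral would equal $\lambda_+(\hat{A})$).

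Now transport everything to the quotient $\Sigma^u \times \mathbb{P}^1$ following Section~7. Let $\widetilde{A}, \widetilde{A}_k$ be the cocycles defined by \eqref{b2}, push $\hat{m}, \hat{m}_k$ to $\widetilde{m}, \widetilde{m}_k$ via \eqref{tutia2}, and project by $P^u \times \mathrm{Id}$ to $m, m_k$ on $\Sigma^u \times \mathbb{P}^1$. Proposition~\ref{uniform} supplies the key uniform-in-$x$ weak-$\ast$ convergence $m_{k,x} \to m_x$. Since the uniform stable holonomy $h^{s,k}$ maps $E^{u,k}_{\hat{y}}$ to $E^{u,k}_{\hat{x}}$ whenever $\hat{y} \in W^s_{loc}(\hat{x})$, the section $\hat{x} \mapsto \mathbb{P}(E^{u,k}_{\hat{x}})$ becomes constant along local stable sets after the tilde conjugation and hence descends to a measurable section $\xi_k \colon \Sigma^u \to \mathbb{P}^1$, so $m_{k,x} = \delta_{\xi_k(x)}$. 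Applying Proposition~\ref{yanise} to the $su$-states $\hat{m}^u, \hat{m}^s$ produces continuous sections $\hat{\xi}^u, \hat{\xi}^s \colon \hat{\Sigma} \to \mathbb{P}^1$, which by the same tilde-and-project procedure descend to continuous sections $\xi^u, \xi^s \colon \Sigma^u \to \mathbb{P}^1$; since $\lambda_+(\hat{A}) > 0$ and $\hat{\mu}$ is fully supported, $\xi^u \neq \xi^s$ on a dense set. Therefore $m_x = \alpha\, \delta_{\xi^u(x)} + (1-\alpha)\, \delta_{\xi^s(x)}$ on that dense set. Fixing any such $x$, weak-$\ast$ convergence of a sequence of Dirac masses $\delta_{\xi_k(x)}$ in the compact $\mathbb{P}^1$ to a non-trivial convex combination of two distinct Dirac masses is impossible (a weak-$\ast$ limit of Diracs is a Dirac), which is the contradiction that closes the argument.

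The hard part will be the degenerate regime where $\lambda_+(\hat{A}_k) = 0$ along a subsequence, since then there is no Oseledets $u$-measure and the natural choices of $\hat{m}_k$ need not have atomic disintegration, so the Dirac contradiction above must be replaced by a finer analysis. In this case Theorem~\ref{invarianza} forces every $\mathbb{P}(\hat{A}_k)$-invariant measure to be an $su$-state; the limit measure $\hat{m}$ must satisfy $\alpha = 1/2$, and the invariance of the continuous disintegration of $\hat{m}$ under both the uniform stable and the non-uniform unstable holonomy (Proposition~\ref{yanise}) can be used to show that $\xi^u$ and $\xi^s$ constitute an $\hat{A}$-invariant continuous splitting, which together with $\lambda_+(\hat{A}) > 0$ yields uniform hyperbolicity of $\hat{A}$ and contradicts its being a discontinuity point. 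A secondary but routine verification is that $\hat{\xi}^u$ and $\hat{\xi}^s$ genuinely descend to continuous sections on $\Sigma^u$; this follows from $\widetilde{A}$ being constant on local stable sets together with the local product structure of $\hat{\mu}$.
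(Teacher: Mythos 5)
Your overall framework (argue by contradiction, realize $\lambda_+(\hat{A}_k)$ by $u$-states $\hat{m}_k$, pass to a weak-$^*$ limit, use Propositions \ref{utosdebil} and \ref{convex}, and transfer to $\Sigma^u\times\mathbb{P}^1$ via Equations (\ref{b2}) and (\ref{tutia2})) matches the setup of the paper's proof. But the step you use to close the argument is wrong. You claim that the uniform stable holonomy $h^{s,k}$ maps $E^{u,k}_{\hat{y}}$ to $E^{u,k}_{\hat{x}}$ for $\hat{y}\in W^s_{loc}(\hat{x})$, so that after the tilde conjugation the unstable Oseledets section is constant along local stable sets and $m^k_x$ is a Dirac mass $\delta_{\xi_k(x)}$. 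The stable holonomy preserves the \emph{stable} Oseledets subspace (a vector in $E^{s,k}_{\hat{x}}$ is characterized by forward contraction, which is preserved under conjugation by the uniformly bounded family $H^{s,k}_{\hat{f}^n\hat{x},\hat{f}^n\hat{y}}$), but it does \emph{not} preserve $E^{u,k}$ in general: a vector growing at rate $\lambda_+(\hat{A}_k)$ is merely a vector outside $E^{s,k}$. If $h^{s,k}$ did preserve $E^{u,k}$, then $\hat{m}_k$ would automatically be an $s$-state and hence an $su$-state, and there would be nothing left to prove in the hard case. In fact the paper proves exactly the opposite of your claim: when $\hat{m}_k$ is \emph{not} an $su$-state, Proposition \ref{nonatomic} shows that every conditional measure $m^k_x$ is non-atomic (its proof is precisely that an atom would force $\hat{m}_k$ to be an $s$-state). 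So your ``a weak-$^*$ limit of Diracs is a Dirac'' contradiction is not available; this is why the paper has to invoke the energy/coupling method of Section 7 of \cite{BBB} (originating in \cite{AEV}) to rule out the non-atomic case, a substantial piece of analysis that your proposal replaces with a false shortcut.

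Your treatment of the remaining regime is also not a proof. The correct dichotomy is not ``$\lambda_+(\hat{A}_k)=0$ versus $\lambda_+(\hat{A}_k)>0$'' but ``$\hat{m}_k$ is an $su$-state (along a subsequence) versus it is not,'' and in the $su$-state case the paper's argument is quite different from your sketch: it uses Kalinin's theorem to produce a hyperbolic periodic point $\hat{p}$, transports the eigendirections $a_k,r_k$ of $\hat{A}_k^{n_p}(\hat{p})$ by the holonomies to continuous sections $a^k_{\hat{z}},r^k_{\hat{z}}$, shows $\operatorname{supp}\hat{m}^k_{\hat{z}}\subset\{a^k_{\hat{z}},r^k_{\hat{z}}\}$, and then derives a contradiction either with the separation of $M^+$ and $M^-$ or with the ergodicity of $\hat{m}_k$, concluding $\hat{m}_k=\hat{m}^u_k$ and hence $\lambda_+(\hat{A}_k)\to\lambda_+(\hat{A})$. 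Your assertions that the limit must have $\alpha=1/2$ and that one obtains uniform hyperbolicity of $\hat{A}$ are unjustified and do not substitute for this argument.
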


Applying the results obtained in Section 7, we are able to extend the argument of \cite{BBB} to conclude Theorem A. However, we use this approach only on the second part of the proof. For the first part, more precisely for Case I below, we use a new strategy which we consider more efficient since shows clearly which hypotheses are absolute necessary and which ones can be weakened. For example, we remark that it is enough to guarantee the existence of the non-uniform unstable holonomy only in finite set of points. This observation can be useful in order to prove the general conjecture of Viana. 

\emph{Proof of Theorem A.} We prove the result by contradiction. That is, suppose that the sequence $(\hat{A}_k,H^{s,k})$ verifies $\hat{A}_k\to \hat{A}$ in the Lipschitz topology and $H^{s,k}$ converges uniformly to $H^s$, but $\lambda_+(\hat{A}_k)$ does not converges to $\lambda_+(\hat{A})$. In particular, this implies $\lambda_-(\hat{A})<0<\lambda_+(\hat{A})$. 

For every $k\in \mathbb{N}$, we denote by $\hat{m}^k$ the ergodic measure that verifies \begin{equation}\label{fust} \lambda_{+}(\hat{A}_k)=\int \Phi_{\hat{A}_k}(\hat{x},v)\, d\hat{m}^k\quad \text{for} \quad \Phi_{\hat{A}_k}(\hat{x},v)=\log \|\hat{A}_k(\hat{x})v\|.\end{equation} We refer the reader to the proof of Proposition \ref{discont} for an argument that implies the existence of these measures. 

Proposition \ref{yanise} gives us a property about su-states which allows us to understand better the nature of  these measures. Thus, the proof of Theorem A is divided into two cases. First, we study the case where there exists a subsequence $j_k$ such that $\hat{m}^{j_k}$ is an $su$-state for every $k\in \mathbb{N}$. For the second case we assume that there exists $k_0\in \mathbb{N}$ such that $\hat{m}^{k}$ is not an $su$-state for every $k\geq k_0$. In order to simplify the notation in the first case, we denote $\hat{m}^{j_k}$ as $\hat{m}^k$. 

\subsection*{Case I: $\hat{m}^k$ are $su$-states.}

By Kalinin \cite{K}, we know that there exists a periodic point $\hat{p}$ of $\hat{f}$ such that $\hat{A}^{n_p}(\hat{p})$ is hyperbolic, where $n_p=per(\hat{p})$. 

Let $i_{\hat{p}}$ be the element in $\{1,...,d\}$ such that $\hat{p}\in [0;i_{\hat{p}}]$ and let $a=\mathbb{P}(E_1)$ and $r=\mathbb{P}(E_2)$, where $E_1$ and $E_2$ are the subspaces of $\mathbb{R}^2$ associated to the eigenvalues of $\hat{A}^{n_p}(\hat{p})$.

Let $\hat{m}^u$ and $\hat{m}^s$ be the measures defined by Equation (\ref{msu}). That is, $\hat{m}^u$ and $\hat{m}^s$ are supported on the Oseledets subspaces associated to $\lambda_{+}(\hat{A})$ and $\lambda_{-}(\hat{A})$ respectively. By Proposition \ref{yanise} and Proposition \ref{discont}, they admit continuous disintegrations satisfying that $\hat{m}^u_{\hat{z}}=\delta_{a_{\hat{z}}}$ and $\hat{m}^s_{\hat{z}}=\delta_{r_{\hat{z}}}$, which implies that the maps $\hat{z}\mapsto a_{\hat{z}}$ and $\hat{z}\mapsto r_{\hat{z}}$ are continuous. In particular, $\hat{m}^u_{\hat{p}}=\delta_a$ and $\hat{m}^s_{\hat{p}}=\delta_{r}$.  Therefore, the sets $$M^+=\{(\hat{z},a_{\hat{z}})\}_{\hat{z}\in\Sig}\, \text{ and }\, M^-=\{(\hat{z},r_{\hat{z}})\}_{\hat{z}\in\Sig}$$ are compact and disjoint. In particular, there exists an $\varepsilon>0$ such that 
\begin{equation}\label{distanceM}
d(M^-,M^+)>\varepsilon.
\end{equation} 

Since hyperbolicity is an open condition and $\hat{A}_k$ converges to $\hat{A}$, for $k$ large enough, $\hat{A}_k^{n_p}(\hat{p})$ is also hyperbolic. We denote as $\{a_k,r_k\}$ the elements of $\mathbb{P}^1$ defined by the subspaces of $\mathbb{R}^2$ associated to the eigenvalues of $\hat{A}_k^{n_p}(\hat{p})$. Then, we conclude that $a_k\to a$ and $r_k\to r$.

Applying Proposition \ref{yanise} to the measures $\hat{m}^k$, we deduce that each $\hat{m}^k$ admits a disintegration $\{\hat{m}_{\hat{z}}^k\}_{\hat{z}\in\Sig}$ such that $\hat{z}\mapsto \hat{m}_{\hat{z}}^k$ is continuous and invariant by the holonomies. 

As a consequence of the ergodicity of $\hat{\mu}$, we know that for each $i\in \{1,...,d\}\setminus \{i_{\hat{p}}\}$ there exists $j_i>0$ such that $\hat{f}^{j_i}([0;i_{\hat{p}}])\cap[0;i]$ is a positive measure set. For every $i\in \{1,...,d\}\setminus \{i_{\hat{p}}\}$ define $j_i$ as the smaller integer with this property and for $i=i_{\hat{p}}$ consider $j_i=0$.

Let $l$ be large enough such that the unstable holonomy block of $\hat{A}$ in Definition \ref{block} satisfies $\hat{f}^{j_i}([0;i_{\hat{p}}])\cap [0;i]\cap\mathcal{D}^u_{\hat{A},l}\neq\emptyset$ for every $i\in \{1,...,d\}$ and fix $\hat{x}_i\in \hat{f}^{-j_i}([0;i]\cap\mathcal{D}^u_{\hat{A},l})\cap [0;i_{\hat{p}}]$.

For each $\hat{z}\in [0;i_{\hat{p}}]$ define, $$a^k_{\hat{z}}=h^k_{\hat{z}}a_k,$$ where  $$h^k_{\hat{z}}= h^{s,k}_{\hat{y_2},\hat{z}}\circ h^{u,k}_{\hat{y_1},\hat{y_2}}\circ h^{s,k}_{\hat{p},\hat{y_1}},$$ and $$\hat{y_1}\in W^s_{loc}(\hat{p})\cap W^u_{loc}(\hat{x}_{i_{\hat{p}}}) \text{ and } \hat{y_2}\in  W^s_{loc}(\hat{z})\cap W^u_{loc}(\hat{x}_{i_{\hat{p}}}).$$

If $\hat{z}\in [0;i]$ with $i\neq i_{{\hat{p}}}$, we define $a^k_{\hat{z}}$ as follows $$a^k_{\hat{z}}=h^{su,k}_{\hat{f}^{j_i}(\hat{x}_i),\hat{z}}\circ \mathbb{P}(\hat{A}^{j_i}_k(\hat{x}_i))a^k_{\hat{x}_i},$$ where  $$h^{su,k}_{\hat{f}^{j_i}(\hat{x}_i),\hat{z}}=h^{s,k}_{\hat{y},\hat{z}}\circ h^{u,k}_{\hat{f}^{j_i}(\hat{x}_i),\hat{y}}$$ and $$y\in W^u_{loc}(\hat{f}^{j_i}(\hat{x}_i))\cap W^s_{loc}(\hat{z}).$$

We construct the map $\hat{z}\mapsto r^k_{\hat{z}}$ analogously using $r_k$ instead of $a_k$.  

The maps $\hat{z}\mapsto a^k_{\hat{z}}$ and $\hat{z}\mapsto r^k_{\hat{z}}$ are continuous. This is due to the continuity of $h^k_{\hat{z}}$, $\hat{A}_k$ and $h^{su,k}$. Also, the topology in $\mathcal{H}^s$ and Proposition \ref{holonomia} allow us to prove that $a^k_{\hat{z}}\to a_{\hat{z}}$ and $r^k_{\hat{z}}\to r_{\hat{z}}$ uniformly on $\Sig$. Recall that $\hat{z}\mapsto a_{\hat{z}}$ and $\hat{z}\mapsto r_{\hat{z}}$ are defined by the disintegrations of $\hat{m}^u$ and $\hat{m}^s$ respectively. 

Since $\hat{m}^k$ is invariant by $\mathbb{P}(\hat{A}_k)$ and it admits a continuous disintegration $\{\hat{m}^k_{\hat{z}}\}$, we infer that
\begin{equation}\label{invm}
\mathbb{P}(\hat{A}_k(\hat{z}))_*\hat{m}_{\hat{z}}^k=\hat{m}_{\hat{f}(\hat{z})}^k\quad \text{ for every } \hat{z}\in \hat{\Sigma}.
\end{equation}
In particular, $\text{supp }\hat{m}^k_{\hat{p}}\subset \{a_k,r_k\}$. Moreover, $\{\hat{m}^k_{\hat{z}}\}$ is invariant by holonomies, then for every $\hat{z}\in \hat{\Sig}$, $\#\text{supp } \hat{m}^k_{\hat{z}}=\#\text{supp } \hat{m}^k_{\hat{p}}$ and $\text{supp } \hat{m}^k_{\hat{z}}\subset \{a^k_{\hat{z}},r^k_{\hat{z}}\}$ .

First, we consider the case $\#\text{supp } \hat{m}^k_{\hat{p}}=2$, that is, $\text{supp } \hat{m}^k_{\hat{z}}=\{a^k_{\hat{z}},r^k_{\hat{z}}\}$ for every $\hat{z}\in \hat{\Sig}$. We suppose that $\hat{z}\mapsto a^k_{\hat{z}}$ and $\hat{z}\mapsto r^k_{\hat{z}}$ are not $\mathbb{P}(\hat{A}_k)$-invariant sections. 

By Equation (\ref{invm}), $$\mathbb{P}(\hat{A}_k(\hat{z}))(\{a_{\hat{z}}^k, r_{\hat{z}}^k\})= \{a_{\hat{f}(\hat{z})}^k, r_{\hat{f}(\hat{z})}^k\}$$ for each $\hat{z}\in\Sig$. Since we assume that the sections are not invariant, we deduce that for every $k\in \mathbb{N}$ there exist $j_k>k$ and $\hat{z}_{j_k}$ such that $$\mathbb{P}(\hat{A}_{j_k}(\hat{z}_{j_k}))(a^{j_k}_{\hat{z_{j_k}}})=r^{j_k}_{\hat{f}(\hat{z}_{j_k})}.$$

By compactness of $\Sig$, there exists $\hat{z}_0$ where the sequence $\{\hat{z}_{j_k}\}$ accumulates. In order to simplify the notation we suppose that $j_k=k$. 

Next, taking $\varepsilon>0$ as in Equation (\ref{distanceM}), we choose $k$ large enough such that $$d(a^k_{ \hat{z}_0 },a_{ \hat{z}_0 })<\frac{\varepsilon}{6}\quad \text{ and }\quad d(r^k_{ \hat{z}_0 },r_{ \hat{z}_0 })<\frac{\varepsilon}{6}.$$ Therefore, 
\begin{equation*}\|r_{\hat{f}( \hat{z}_0 )}-a_{\hat{f}( \hat{z}_0 )}\|\leq \|r_{\hat{f}( \hat{z}_0 )}-r^k_{\hat{f}(\hat{z}_k)}\|+\|\mathbb{P}(\hat{A}_k(\hat{z}_k))(a^k_{\hat{z}_k})- \mathbb{P}(\hat{A}( \hat{z}_0 ))(a_{ \hat{z}_0 })\|<\frac{\varepsilon}{3},
\end{equation*}
which is a contradiction to the fact that $M^+$ and $M^-$ are separated sets. 

Therefore, there exists a $k_0\in \mathbb{N}$ such that for every $k\geq k_0$, $\hat{z}\mapsto a^k_{\hat{z}}$ and $\hat{z}\mapsto r^k_{\hat{z}}$ are $\mathbb{P}(\hat{A}_k)$-invariant sections. Then, it is possible to define two $\mathbb{P}(\hat{A}_k)$-invariant measures, $$\hat{m}^{s,k}=\int\delta_{r^k_{\hat{z}}}\, d\hat{\mu} \quad \text{ and }\quad \hat{m}^{u,k}=\int\delta_{a^k_{\hat{z}}}\, d\hat{\mu}.$$ Moreover, we deduce that $\hat{m}^k=\alpha_k \hat{m}^{s,k}+(1-\alpha_k)\hat{m}^{u,k}$ with $\alpha_k\neq0$, which contradicts the ergodicity of $\hat{m}^k$. This contradiction arises from the assumption of $\#\text{supp }\hat{m}_{\hat{p}}^k=2$. Summarizing, we know that $\#\text{supp }\hat{m}_{\hat{p}}^k=1$ and thus either $\hat{m}^k=\hat{m}^{u,k}$ or $\hat{m}^k=\hat{m}^{s,k}$. 

We consider the case $\hat{m}^k=\hat{m}^{u,k}$ and conclude that $\lambda_+(\hat{A}_k)$ converges to $\lambda_+(\hat{A})$. This is not possible since $\hat{A}$ is a discontinuity point of the Lyapunov exponents, therefore arriving at a contradiction. 

In the following, we prove that $\hat{m}^{k}\to \hat{m}^u$ in the weak-$^*$ topology. Take any continuous bounded function $\varphi:\Sig\times\mathbb{P}^1\to\R$ and define $\psi_k(\hat{z}):=\varphi(\hat{z},a^k_{\hat{z}})$ for every $k\in \mathbb{N}$. Analogously, define $\psi$ using $a_{\hat{z}}$ instead of $a^k_{\hat{z}}$. Thus, these continuous functions satisfy that for each $\hat{z}\in\Sig$, $$\mid \psi_k(\hat{z})\mid<\sup_{(\hat{x},v)\in\Sig\times\mathbb{P}^1}\mid\varphi(\hat{x},v)\mid.$$ As $a_{\hat{z}}^k\to a_{\hat{z}}$ uniformly in $\Sig$, we get that $\psi_k(\hat{z})\to \psi(\hat{z})$, then the dominated convergence theorem implies that $$\int \varphi\, d\hat{m}^k=\int\psi_k\, d\hat{\mu}\to\int\psi\, d\hat{\mu}=\int \varphi\, d\hat{m}^u.$$

Finally, using Equation (\ref{fust}) and the definition of $\hat{m}^u$, we infer that $$\lambda_+(\hat{A}_k)=\int \Phi_{\hat{A}_k}(\hat{x},v)\, d\hat{m}^k\to\int \Phi_{\hat{A}}(\hat{x},v)\, d\hat{m}^u=\lambda_+(\hat{A}),$$
which is a contradiction. 

If $\hat{m}^k=\hat{m}^{s,k}$, then the same argument as above shows that $\lambda_+(\hat{A}_k)=0=\lambda_-(\hat{A}_k)$ and it converges to $\lambda_-(\hat{A})$. This concludes that $\lambda_+(\hat{A})=0=\lambda_-(\hat{A})$ which again is not possible.

Therefore, we have established that Case I is not compatible with $\hat{A}$ being a discontinuity point of the Lyapunov exponents.

\subsection*{Case II: $\hat{m}^k$ are not $su$-states for every $k\geq k_0$.}

This second case is divided in two parts. 

\subsubsection*{Part I.} We want to apply the results of Section 7 to this context. 

Let $\hat{A}$ and $\hat{A}_k$ be the cocycles in the statement of Theorem A. Recall that for every $k\in \mathbb{N}$, $\hat{m}^k$ is an ergodic measure that verifies Equation (\ref{fust}). We can assume without loss of generality that $\hat{m}^k$ converges in the weak-$^{*}$ topology to some measure $\hat{m}$. Observe that $\hat{m}$ is a $\mathbb{P}(\hat{A})$-invariant probability measure projecting to $\hat{\mu}$ and by Proposition \ref{convex}, there exists $\alpha\neq 1$ such that $\hat{m}=\alpha\hat{m}^u+(1-\alpha)\hat{m}^s$.

Let $\widetilde{A}$ and $\widetilde{A}_k$ defined by Equation (\ref{b2}) applied to $(\hat{A}, H^s)$ and $(\hat{A}^k, H^{s,k})$ respectively.

For every $k\in \mathbb{N}$, we consider the measure $\widetilde{m}^k$ defined by Equation (\ref{tutia2}) applied to a disintegration of $\hat{m}^k$ and $h^{s,k}=\mathbb{P}(H^{s,k})$. Analogously, we define $\widetilde{m}$.  

Recall that $P^u\colon \hat{\Sigma}\to \Sigma^u$, $W^s_{loc}(x)=(P^u)^{-1}(x)$ for every $x\in \Sigma^u$ and $\hat{\mu}_x$ denotes the disintegration of $\hat{\mu}$ along $W^s_{loc}(x)$ as in Lemma \ref{lpe}.

Define $m^k=(P^u\times Id)_*\widetilde{m}^k$ and $m=(P^u\times Id)_*\widetilde{m}$. Observe that $m^k$ and $m$ are measures induced by $u$-states as in Definition \ref{induced}. Moreover, Proposition \ref{47} and Proposition \ref{uniform} hold. 

In the following, for every $k\geq k_0$, $\{m^k_x\}$ denotes the disintegration of $m^k$ given by Proposition \ref{47}.

\begin{prop}\label{nonatomic} The measures $m^k_x$ are non-atomic for every $x\in \Sigma^u$ and for every $k\geq k_0$.
\end{prop}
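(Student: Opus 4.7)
We argue by contradiction. Assume there exist $k \geq k_0$ and $x_0 \in \Sigma^u$ for which $m^k_{x_0}$ has an atom. The plan is to derive that $\hat{m}^k$ is then an $s$-state; since $\hat{m}^k$ is already a $u$-state (in Case II one must have $\lambda_+(\hat{A}_k) > 0$ by Theorem \ref{invarianza}, in which case $\hat{m}^k = \hat{m}^u_k$ is a $u$-state), this would make $\hat{m}^k$ an $su$-state, contradicting the Case II hypothesis.

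The first step is to exploit ergodicity on $\Sigma^u \times \mathbb{P}^1$. Because $\widetilde{A}_k$ is constant along local stable sets it factors as $\widetilde{A}_k = A_k \circ P^u$, so $m^k$ is $\mathbb{P}(A_k)$-invariant; moreover $(P^u \times Id)$ is a factor map, and $\widetilde{m}^k$ is the push-forward of the ergodic $\hat{m}^k$ under the bijection induced by (\ref{tutia2}), so $m^k$ is ergodic. Since $(A_k(x))_*$ preserves atomic weights, the maximum atomic weight $w^*(x)$ and the number $N(x)$ of atoms attaining it are $f$-invariant, hence $\mu^u$-a.e.\ constant, equal to some $w^* > 0$ and $N \in \mathbb{N}$. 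The atomic-top decomposition $m^k = Nw^* m^{\mathrm{top}} + (1 - Nw^*) m^{\mathrm{res}}$ with $m^{\mathrm{top}}_x = \frac{1}{N}\sum_{[v] \in S_x}\delta_{[v]}$ then contradicts ergodicity unless $Nw^* = 1$. Hence $m^k_x = \frac{1}{N}\sum_{[v] \in S_x}\delta_{[v]}$ is purely atomic with $N$ equally weighted atoms, and $x \mapsto S_x$ is continuous by Proposition \ref{47} and $A_k$-equivariant.

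The second step lifts this structure to $\hat{\Sigma}$. The identity $\int_{W^s_{loc}(x)} \widetilde{m}^k_{\hat{x}}\, d\hat{\mu}_x(\hat{x}) = m^k_x$ forces $\widetilde{m}^k_{\hat{x}}$ to be supported on $S_{P^u(\hat{x})}$ for $\hat{\mu}$-a.e.\ $\hat{x}$. Running the same rigidity argument on the ergodic measure $\widetilde{m}^k$ gives that $\widetilde{m}^k_{\hat{x}}$ is purely atomic with a constant number of equally weighted atoms; combining the $u$-state invariance $(\widetilde{h}^{u,k}_{\hat{x},\hat{y}})_*\widetilde{m}^k_{\hat{x}} = \widetilde{m}^k_{\hat{y}}$ with the local product structure of $\hat{\mu}$ through Lemma \ref{lpe} pins the number of atoms to $N$ and forces $\widetilde{m}^k_{\hat{x}} = \frac{1}{N}\sum_{[v] \in S_{P^u(\hat{x})}}\delta_{[v]}$, constant along each local stable set. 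Reversing (\ref{tutia2}), for $\hat{x}, \hat{y} \in W^s_{loc}(\hat{z})$ one has $g(\hat{x}) = g(\hat{y})$ and $\widetilde{m}^k_{\hat{x}} = \widetilde{m}^k_{\hat{y}}$, and the cocycle identity $h^{s,k}_{\hat{x},\hat{y}} \circ h^{s,k}_{g(\hat{z}),\hat{x}} = h^{s,k}_{g(\hat{z}),\hat{y}}$ yields $(h^{s,k}_{\hat{x},\hat{y}})_*\hat{m}^k_{\hat{x}} = \hat{m}^k_{\hat{y}}$. Thus $\hat{m}^k$ is an $s$-state, the desired contradiction.

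The main obstacle lies in the second step: showing that the pointwise atomic weights of $\widetilde{m}^k_{\hat{x}}$ on $S_{P^u(\hat{x})}$ must coincide with $1/N$ rather than fluctuating in a way that only matches the uniform distribution after integration over $W^s_{loc}(x)$. This ruling-out uses the transport of weights under the non-uniform unstable holonomy $\widetilde{h}^{u,k}$ together with the absolute continuity and continuous variation of the disintegrations $\{\hat{\mu}_x\}$ provided by Lemma \ref{lpe}, combined with essential uniqueness of disintegrations.
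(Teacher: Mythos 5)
Your overall strategy coincides with the paper's: argue by contradiction, upgrade the atom to constancy of $\widetilde{m}^k_{\hat{x}}$ along local stable sets, reverse Equation (\ref{tutia2}) using $g(\hat{x})=g(\hat{y})$ to conclude that $\hat{m}^k$ is an $s$-state, and combine with the $u$-state property to contradict the Case~II hypothesis. The closing step of your argument is essentially identical to the paper's. However, there is a genuine gap in the middle, and it sits exactly where you flag ``the main obstacle'': you never actually rule out that the pointwise weights $\widetilde{m}^k_{\hat{x}}(\{v\})$ fluctuate over $W^s_{loc}(x)$ and only average out to $1/N$. Gesturing at ``transport under $\widetilde{h}^{u,k}$, Lemma \ref{lpe}, and essential uniqueness'' is not a proof: the unstable holonomy relates conditionals over \emph{different} stable sets and says nothing about variation \emph{within} one stable set, which is the issue at hand. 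This step is precisely the content of Lemma 5.3 of \cite{BV04} (Lemma \ref{ews} in the paper), a nontrivial dynamical statement that the paper cites rather than reproves; you neither cite nor establish it.

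The missing idea that makes the paper's proof short is one you state but do not exploit: in Case~II one has $\lambda_{+}(\hat{A}_k)>0$ and $\hat{m}^k=\int\delta_{\mathbb{P}(E^{u,k}_{\hat{x}})}\,d\hat{\mu}$, so every conditional $\hat{m}^k_{\hat{x}}$ --- and hence every $\widetilde{m}^k_{\hat{x}}$, being a holonomy push-forward --- is a \emph{single Dirac mass}. Then Lemma \ref{ews} gives $m^k_y(v^k_y)=\widetilde{m}^k_{\hat{y}}(v^k_y)\in\{0,1\}$ for a.e.\ $\hat{y}\in W^s_{loc}(y)$, forcing $\gamma_k=1$ and $\widetilde{m}^k_{\hat{y}}=\delta_{v^k_y}$ a.e.\ on the stable set; your entire first step (the equidistributed-atoms decomposition) and the problematic lifting become unnecessary. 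I would also note that your first step is itself only a sketch: for the one-sided factor $(f,\mu^u)$ the disintegration satisfies $m^k_{f(x)}=\sum_{x'\in f^{-1}(f(x))}c_{x'}\,\mathbb{P}(A_k(x'))_*m^k_{x'}$, an average over preimages rather than a single push-forward, so the $f$-invariance of the maximal atomic weight and of the top-atom set requires the martingale-type argument of Lemma 5.2 of \cite{BV04} (Lemma \ref{ee}), not just equivariance. Cite those two lemmas and use the single-Dirac structure of $\widetilde{m}^k_{\hat{x}}$, and your argument closes.
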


In order to prove the proposition above, we need the next two lemmas from \cite{BV04}. 

\begin{lemma}[Lemma 5.2, \cite{BV04}]\label{ee} If there exists $x\in \Sigma^u$ such that $m_x^k$ is atomic, then there exists $\gamma_k>0$ such that for every $y\in \Sigma^u$, there exists $v_y^k\in\mathbb{P}^1$ such that the measure $m^k_y$ satisfies $\gamma_k=m_y^k(v_y^k)>0$.
\end{lemma}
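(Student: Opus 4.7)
The strategy is to propagate the atomic structure of $m_x^k$ from the single fiber at $x$ to every fiber of $\Sigma^u\times\mathbb{P}^1$, using the $\mathbb{P}(A_k)$-invariance of $m^k$ together with the continuity of the disintegration $y\mapsto m_y^k$ supplied by Proposition~\ref{47}. Since $\widetilde A_k$ is constant along local stable sets it descends to a cocycle $A_k\colon\Sigma^u\to SL(2,\R)$, and $m^k$ is a $\mathbb{P}(A_k)$-invariant probability measure on $\Sigma^u\times\mathbb{P}^1$ projecting to $\mu^u:=P^u_*\hat\mu$.

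The first step is to establish the disintegration identity
\[m_y^k=\int_{f^{-1}(y)} (A_k(z))_*\, m_z^k\, d\mu^u_y(z)\]
at \emph{every} $y\in\Sigma^u$, where $\mu^u_y$ is the conditional of $\mu^u$ on the finite preimage set $f^{-1}(y)$. Writing out the change of variables for $\mathbb{P}(A_k)$-invariance gives this relation for $\mu^u$-a.e. $y$; Proposition~\ref{47} together with the continuity of $A_k$ and of $y\mapsto \mu^u_y$ (which holds because $\mu^u$ is Bernoulli in the simplest case and Gibbs in general) shows that both sides depend continuously on $y$ in the weak-$*$ topology, so the identity extends everywhere.

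The second step is to propagate an atom backwards. If $v^*$ is an atom of $m_x^k$ of mass $\gamma^0>0$, evaluating the identity at $\{v^*\}$ gives $\gamma^0=\sum_{z\in f^{-1}(x)}\mu^u_x(\{z\})\,m_z^k(\{A_k(z)^{-1}v^*\})$, so by pigeon-hole some preimage $z_{-1}$ produces an atom of $m_{z_{-1}}^k$ of mass at least $\gamma^0$. Iterating yields a backward trajectory $x=z_0,z_{-1},z_{-2},\ldots$ along which every fiber carries an atom of mass $\geq\gamma^0$. The flexibility in the pigeon-hole step together with the topological transitivity of $f$ on $\Sigma^u$ and positivity of $\mu^u_y(\{z\})$ on every admissible preimage allows this backward trajectory to be arranged dense in $\Sigma^u$.

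Define $A=\{y\in\Sigma^u:m_y^k\text{ carries an atom of mass}\geq\gamma^0\}$. For $y_n\to y_0$ in $A$ with witnessing atoms $v_n$, a subsequence gives $v_n\to v$ by compactness of $\mathbb{P}^1$; combining the weak-$*$ convergence $m_{y_n}^k\to m_{y_0}^k$ from Proposition~\ref{47} with the upper semi-continuity $\limsup_n\nu_n(K)\leq\nu(K)$ applied to closed balls $K=\overline{B_\epsilon(v)}$ gives $m_{y_0}^k(\overline{B_\epsilon(v)})\geq\gamma^0$, and letting $\epsilon\to 0$ produces an atom at $v$ of mass $\geq\gamma^0$. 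Hence $A$ is closed, and combined with density of the backward trajectory this forces $A=\Sigma^u$. Setting $\gamma_k=\gamma^0$ and $v_y^k$ to be an atom of $m_y^k$ of appropriate mass (using convexity of $\nu\mapsto\max_v\nu(\{v\})$ and ergodicity to see this mass is $\mu^u$-a.e. constant and then equal to $\gamma_k$ on all of $\Sigma^u$ via the closedness argument) completes the proof. \textbf{The main obstacle} is the promotion of the almost-everywhere disintegration identity to a pointwise one, which is essential for starting the argument at the \emph{given} $x$ rather than at a $\mu^u$-generic point and which relies crucially on Proposition~\ref{47}; a secondary subtlety is arranging the backward orbit produced by the pigeon-hole step to be dense in $\Sigma^u$.
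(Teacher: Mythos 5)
The paper does not actually prove this lemma; it is imported verbatim from Lemma 5.2 of \cite{BV04}, so your argument has to stand on its own. Your first two steps are sound: promoting the disintegration identity $m_y^k=\int_{f^{-1}(y)}(A_k(z))_*m_z^k\,d\mu^u_y(z)$ from $\mu^u$-a.e.\ $y$ to every $y$ via the continuity supplied by Proposition \ref{47} is the right move and is what lets you start at the given $x$; the pigeon-hole step correctly produces one preimage $z_{-1}\in f^{-1}(x)$ carrying an atom of mass $\geq\gamma^0$; and the closedness of $A=\{y:\gamma(y)\ge\gamma^0\}$ via upper semi-continuity of $\gamma(y)=\max_v m^k_y(\{v\})$ is correct.

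The gap is the density claim. The identity $\gamma^0=\sum_z w_z\,m^k_z(\{A_k(z)^{-1}v^*\})$ only guarantees that \emph{some} preimage carries a large atom; it gives you no choice of which one, so you obtain a single backward orbit, and a single backward orbit of a subshift need not be dense (nor does ``$A$ closed with $f^{-1}(y)\cap A\neq\emptyset$ for all $y\in A$'' force $A=\Sigma^u$: a fixed point already satisfies both). The standard repair is to run your own argument at a maximizer rather than at $x$: since $\gamma$ is upper semi-continuous on the compact $\Sigma^u$, it attains its maximum $\gamma_*\ge\gamma(x)>0$ at some $y_*$, and there the convex-combination identity gives $\gamma_*=\sum_z w_z\, m^k_z(\{A_k(z)^{-1}v_*\})\le\sum_z w_z\gamma(z)\le\gamma_*$; since all weights $w_z$ are positive (full support of $\hat{\mu}$), equality forces $\gamma(z)=\gamma_*$ for \emph{every} $z\in f^{-1}(y_*)$. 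Iterating gives $\gamma\equiv\gamma_*$ on the whole preimage tree $\bigcup_n f^{-n}(y_*)$, which \emph{is} dense by transitivity of the subshift; your closedness argument then yields $\gamma\ge\gamma_*$ everywhere, while $\gamma\le\gamma_*$ holds by definition of the maximum, so $\gamma\equiv\gamma_*$. This also repairs your final step: ``a.e.\ constant by ergodicity'' plus closedness only gives the one-sided bound $\gamma\ge\gamma_k$ everywhere, whereas the lemma asserts the atom mass equals $\gamma_k$ at every $y$, which the maximizer argument delivers directly with $\gamma_k=\gamma_*$.
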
 

The lemma above implies that if one element of the disintegration has an atom, then every other element of the disintegration also has an atom. Observe that $\gamma_k$ does not depend on $y$.

\begin{lemma}[Lemma 5.3, \cite{BV04}]\label{ews} If $y\in\Sigma^u$ verifies $m_y^k(v_y^k)>0$ for some $v_y^k\in\mathbb{P}^1$, then for $\hat{\mu}_{y}$-almost every $\hat{y}\in W^s_{loc}(y)$, $\widetilde{m}_{\hat{y}}^k(v^k_y)>0$. Moreover, $m_y^k(v_y^k)=\widetilde{m}_{\hat{y}}^k(v^k_y).$ 
\end{lemma}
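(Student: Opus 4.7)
The plan is to leverage the two structural features that distinguish $\widetilde{A}_k$ from $\hat{A}_k$: namely, that $\widetilde{A}_k$ is constant along local stable sets (Equation (\ref{b2})), and that $\widetilde{m}^k$ is still ergodic and $\mathbb{P}(\widetilde{A}_k)$-invariant, together with the ergodicity of $\hat{\mu}$ under $\hat{f}$. Write $\gamma_k=m_y^k(v_y^k)$ as given by Lemma \ref{ee}. The first assertion of the lemma is immediate from
\begin{equation*}
\gamma_k \;=\; m_y^k(\{v_y^k\}) \;=\; \int_{W^s_{loc}(y)} \widetilde{m}^k_{\hat{y}}(\{v_y^k\})\, d\hat{\mu}_y(\hat{y}),
\end{equation*}
which forces $\widetilde{m}^k_{\hat{y}}(\{v_y^k\})>0$ on a set of positive $\hat{\mu}_y$-measure. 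The whole point of the lemma is the sharper equality, which I would obtain as follows.

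First I would build an equivariant selection of atom positions. Since $\mathbb{P}(A_k(y))_* m_y^k = m_{f(y)}^k$, the projective map $\mathbb{P}(A_k(y))$ sends the finite set $\{v \in \mathbb{P}^1 : m_y^k(\{v\}) \geq \gamma_k\}$ bijectively onto the corresponding finite set at $f(y)$. By a measurable selection argument applied to this multivalued $f$-equivariant map over $\Sigma^u$, I can choose $y\mapsto v_y^k$ so that $v_{f(y)}^k = \mathbb{P}(A_k(y))v_y^k$ for every $y$, and so that the chosen $v_y^k$ agrees with the prescribed one at the given $y$. Define
\begin{equation*}
\Phi:\hat{\Sigma}\to[0,1],\qquad \Phi(\hat{x}) \;=\; \widetilde{m}^k_{\hat{x}}\bigl(\{v^k_{P^u(\hat{x})}\}\bigr).
\end{equation*}
Since $\widetilde{A}_k(\hat{x})=A_k(P^u(\hat{x}))$ by construction and $\mathbb{P}(\widetilde{A}_k(\hat{x}))_*\widetilde{m}^k_{\hat{x}}=\widetilde{m}^k_{\hat{f}(\hat{x})}$, a direct computation gives $\Phi(\hat{f}(\hat{x}))=\Phi(\hat{x})$. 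By ergodicity of $\hat{\mu}$, $\Phi$ is $\hat{\mu}$-a.e.\ equal to a constant $c$, and integrating along stable disintegrations via Lemma \ref{lpe} yields
\begin{equation*}
c \;=\; \int_{\Sigma^u}\!\!\int_{W^s_{loc}(y)} \widetilde{m}^k_{\hat{y}}(\{v^k_y\})\, d\hat{\mu}_y(\hat{y})\, d\mu^u(y) \;=\; \int_{\Sigma^u} m^k_y(\{v^k_y\})\, d\mu^u(y) \;=\; \gamma_k,
\end{equation*}
where the last equality uses Lemma \ref{ee}. Hence $\widetilde{m}^k_{\hat{y}}(\{v^k_y\})=\gamma_k$ for $\hat{\mu}$-a.e.\ $\hat{y}$, which by Fubini gives the equality $m_y^k(v_y^k)=\widetilde{m}^k_{\hat{y}}(v^k_y)$ on a $\hat{\mu}_y$-full subset of $W^s_{loc}(y)$ for $\mu^u$-almost every $y$.

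The remaining step is to promote this from $\mu^u$-a.e.\ $y$ to every $y$ satisfying the hypothesis, which I expect to be the main obstacle. Here I would use Proposition \ref{47}, which gives continuity of $y\mapsto m_y^k$ in the weak-$^*$ topology. Since $\gamma_k$ is a uniform positive lower bound on the size of the distinguished atom for $\mu^u$-a.e.\ $y$, atoms of mass $\gamma_k$ cannot disappear under weak-$^*$ limits, so the conclusion for a dense $\mu^u$-full set of base points extends by continuity to every $y \in \Sigma^u$. The delicate part is ensuring the measurable equivariant selection $y \mapsto v_y^k$ is actually well-defined across $f$-orbits when $m_y^k$ admits several atoms of mass $\gamma_k$; one handles this by first fixing the value at the given $y$ and extending equivariantly along its forward and backward $f$-orbit using the bijection induced by $\mathbb{P}(A_k(y))$ on the finite atom set, then completing the section measurably on the complement using a Borel transversal of the $f$-orbit relation.
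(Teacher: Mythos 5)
The paper itself does not prove this lemma (it is quoted from Lemma 5.3 of \cite{BV04}), so your argument has to stand on its own, and it has a genuine gap: the identity $\mathbb{P}(A_k(y))_*m^k_y=m^k_{f(y)}$ on which the whole construction rests is false. The base map $f$ on $\Sigma^u$ is not invertible, so invariance of $m^k$ under the projective skew product only says that $m^k_{f(y)}$ is the \emph{average} of the pushforwards $\mathbb{P}(A_k(y'))_*m^k_{y'}$ over the whole preimage fiber $y'\in f^{-1}(f(y))$ (with respect to the conditional of $\mu^u$ on that fiber); it does not say that each individual conditional pushes forward onto $m^k_{f(y)}$. A concrete test: for a locally constant cocycle over a Bernoulli shift, with $\widetilde{m}^k$ the $u$-state carried by the Oseledets unstable directions (which depend only on the past coordinates), one finds $m^k_y=\nu$ for every $y$, where $\nu$ is the Furstenberg stationary measure; your identity would then force $\mathbb{P}(A(i))_*\nu=\nu$ for every symbol $i$, whereas stationarity only gives $\sum_i p_i\,\mathbb{P}(A(i))_*\nu=\nu$. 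Consequently $\mathbb{P}(A_k(y))$ need not send the set of atoms of mass $\geq\gamma_k$ of $m^k_y$ onto that of $m^k_{f(y)}$ (an atom of mass $\gamma$ at $y$ only contributes an atom of mass $p(y)\gamma$ at $f(y)$), the equivariant section $v^k_{f(y)}=\mathbb{P}(A_k(y))v^k_y$ cannot be built this way, the function $\Phi$ has no reason to be $\hat{f}$-invariant, and the ergodicity step collapses.

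Two further points. Even if such a section existed, your argument would only yield the equality for $\mu^u$-almost every $y$ and for the selected atom of maximal mass, while the statement concerns every $y$ carrying some atom; the proposed upgrade via Proposition \ref{47} controls the continuity of $y\mapsto m^k_y$, but the conclusion also involves the conditionals $\widetilde{m}^k_{\hat{y}}$ on the single (null) fiber $W^s_{loc}(y)$, which weak-$^*$ continuity of the induced disintegration does not reach. The correct route (the one of \cite{BV04}) works on the two-sided level, where the base is invertible: there the relation $\widetilde{m}^k_{\hat{f}(\hat{y})}=\mathbb{P}(\widetilde{A}_k(\hat{y}))_*\widetilde{m}^k_{\hat{y}}$ does hold $\hat{\mu}$-almost everywhere, so quantities such as the maximal atom mass of $\widetilde{m}^k_{\hat{y}}$ are $\hat{f}$-invariant, hence $\hat{\mu}$-a.e.\ constant, and these are then compared with the atoms of $m^k_y=\int_{W^s_{loc}(y)}\widetilde{m}^k_{\hat{y}}\,d\hat{\mu}_y$ using the $u$-state structure; the a.e.\ constancy should be exploited there, not on $\Sigma^u$. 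Finally, note that your opening computation only shows $\widetilde{m}^k_{\hat{y}}(v^k_y)>0$ on a set of positive $\hat{\mu}_y$-measure, so the first assertion is not ``immediate''; it must come out of the same argument as the equality.
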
 

\subsubsection*{Proof of Proposition \ref{nonatomic}.} Suppose, by contradiction, that there exist $k\geq k_0$ and $x\in \Sigma^u$ such that $m_x^k$ is an atomic measure.

By Theorem \ref{invarianza} and the assumption of $\hat{m}^k$ not being a $su$-state, we deduce that $\lambda_+(\hat{A}_k)>0$. Therefore, $$\hat{m}^k=\int\delta_{\mathbb{P}(E^{u,k}_{\hat{x}})}d\hat{\mu}.$$ 

Since $\widetilde{m}^k$ is defined by Equation (\ref{tutia2}), any disintegration of $\widetilde{m}$, $\{\widetilde{m}^k_{\hat{x}}\}$, verifies that $\widetilde{m}^k_{\hat{x}}$ has only one atom for almost every $\hat{x}\in \hat{\Sigma}$. This last observation combined with Lemma \ref{ee} and Lemma \ref{ews} shows that $\gamma_k$ is equal to 1 and there exists a full measure subset $E$ of $\hat{\Sigma}$, such that $\widetilde{m}_{\hat{x}}=\widetilde{m}_{\hat{y}}$ for every $\hat{x},\hat{y}\in E$ such that $P^u(\hat{x})=P^u(\hat{y})$. 

Let $\hat{x},\hat{y}\in E$ such that $\hat{y}\in W^s_{loc}(\hat{x})$ and define $g(\cdot)$ as in Section 7. Therefore, by Equation (\ref{tutia2}), $$\left(h^{s}_{\hat{y},g(\hat{y})}\right)_*\hat{m}_{\hat{y}}=\widetilde{m}_{\hat{y}}=\widetilde{m}_{\hat{x}}=\left(h^{s}_{\hat{x},g(\hat{x})}\right)_*\hat{m}_{\hat{x}}.$$
Using that $g(\hat{x})=g(\hat{y})$ if $\hat{y}\in W^s_{loc}(\hat{x})$, we conclude that $$\hat{m}_{\hat{y}}=\left(h^{s}_{g(\hat{x}),\hat{y}}\circ h^{s}_{\hat{x},g(\hat{x})}\right)_*\hat{m}_{\hat{y}}=(h^{s}_{\hat{x},\hat{y}})_*\hat{m}_{\hat{x}}.$$

This last equation implies that $\hat{m}^k$ is an $s$-state, and therefore an $su$-state which contradicts the assumption in Case II. This contradiction arises from the assumption that there exist $k\geq k_0$ and $x\in \Sigma^u$ such that $m^k_x$ is an atomic measure, then the proposition follows. $\square$

Summarizing, we suppose that $(\hat{A}_k,H^{s,k})\to (\hat{A},H^s)$ in $\mathcal{H}^s$, but $\lambda_+(\hat{A}_k)$ does not converges to $\lambda_+(\hat{A})$. Moreover, we assume that there exist $k_0\in \mathbb{N}$ such that the ergodic measures $\hat{m}^k$ that satisfy Equation (\ref{fust}) are not $su$-states for every $k\geq k_0$ and $\hat{m}^k\to \hat{m}$ in the weak-$^{*}$ topology. By Proposition \ref{convex}, $\hat{m}=\alpha\hat{m}^u+(1-\alpha)\hat{m}^s$ with $\alpha\neq 1$ and by Proposition \ref{discont}, $\hat{m}$ is an $su$-state. 

Applying Equation (\ref{b2}) to $(\hat{A},H^s)$ and $(\hat{A}_k, H^{s,k})$ for every $k\in \mathbb{N}$, we can define linear cocycles $\widetilde{A},\widetilde{A}_k\colon \Sigma\to SL(2,\mathbb{R})$ which are constant along local stable sets. This implies that there exist cocycles $A, A_k\colon \Sigma^u\to SL(2,\mathbb{R})$ such that $\widetilde{A}=A\circ P^u$, $\widetilde{A}_k=A_k\circ P^u$ and $A_k\to A$ in the $C^0$ topology. Moreover, $\lambda_{+}(\hat{A})=\lambda_{+}(A)$.

If $\widetilde{m}^k$ and $\widetilde{m}$ are defined by Equation (\ref{tutia2}), we infer that $m^k=(P^u\times Id)_*\widetilde{m}^k$ and $m=(P^u\times Id)_*\widetilde{m}$ are measures induced by $u$-states as in Definition \ref{induced}. Moreover, there exist continuous disintegrations $\{m^k_x\}$ and $\{m_x\}$ defined for every $x\in \Sigma^u$ such that $m^k_x$ are non-atomic measures for every $x\in \Sigma^u$ and $k\geq k_0$ and $m^k_x\to m_x$ uniformly on $x$. 

At this point, the argument follows in the same way as in \cite{BBB}, more precisely the results in Section 7 of \cite{BBB} conclude the proof of Theorem A. We provide a brief introduction of the method they use but we do not repeat the proof here. 

\subsubsection*{Part II: The energy method.}

The energy method was first introduced by Avila, Eskin and Viana \cite{AEV}, as the starting point of on ongoing project dealing with the continuity of Lyapunov exponents for random product of matrices in dimension higher than 2. This argument allow them to provide an alternative proof for \cite{BV}. We refer the reader to Chapter 10 of \cite{LLE} for a detailed explanation in that setting. Roughly speaking, they use the tools of coupling and energies to prove that the expanding point of $\mathbb{P}(A)$ defined by the stable subspace associated to $\lambda_{-}(A)$ is invisible for $\eta$ if $\eta$ is the limit measure of a sequence of non-atomic stationary measures $\eta_k$. 

In the context of \cite{BBB} the authors have to deal with a more general situation since there exists a non-atomic measure $m^k_x$ in $\mathbb{P}^1$ for every $x\in \Sigma^u$. However, it is possible to extend the energy method using Proposition \ref{uniform}: $m_x^k\to m_x$ uniformly. Therefore, they consider a suitable family of sets $U_x$ and apply the argument to the measures $\{m^k_x\vert_{U_x}\}_{x\in\Sigma}$. Their approach is closer to the higher dimensional version in \cite{AEV}, since they consider additive Margulis functions to conclude their result.

A special case of the energy method, using an explicit estimation, was used by Tall and Viana \cite{TV} to study the moduli of continuity of Lyapunov exponents of random products of matrices in dimension 2. See also Appendix A of \cite{SV}. $\square$

\section{Proof of Theorem B} 

Recall that $\hat{M}=\{1,...,d\}^{\mathbb{Z}}$ and $\hat{\mu}$ is a fully supported Bernoulli measure.

\begin{theoB} Let $\hat{A}\in \mathcal{S}_1(\hat{M},2)$ be a non-uniformly fiber-bunched, locally constant and irreducible cocycle. If $\hat{A}_k\to \hat{A}$ in $\mathcal{S}_1(\hat{M},2)$, then $\lambda_{+}(\hat{A}_k)\to \lambda_{+}(\hat{A})$. 
\end{theoB}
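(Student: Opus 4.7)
\emph{Proof proposal for Theorem B.} The plan is to argue by contradiction using the characterization of discontinuity points from Proposition~\ref{discont}, and to exploit the fact that in the locally constant setting both invariant holonomies of $\hat{A}$ are trivial, which combined with irreducibility and the Bernoulli hypothesis will be incompatible with $\hat{m}^u$ being an $su$-state.

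First, I would dispose of the easy case: if $\lambda_+(\hat{A})=0$ then $\lambda_{+}(\hat{A}_k)\geq 0=\lambda_{+}(\hat{A})$ for every $k$, and upper semi-continuity forces $\lambda_+(\hat{A}_k)\to 0=\lambda_+(\hat{A})$. Assume therefore that $\lambda_+(\hat{A})>\lambda_-(\hat{A})$ and, for contradiction, that $\hat{A}$ is a discontinuity point. Because $\hat{A}(\hat{x})$ depends only on $x_0$, for any $\hat{y},\hat{z}\in W^s_{loc}(\hat{x})$ we have $\hat{A}^n(\hat{y})=\hat{A}^n(\hat{z})$ for all $n\geq 0$, so the limit in Proposition~\ref{exist} is $H^s_{\hat{y},\hat{z}}=\mathrm{Id}$ and this defines a uniform stable holonomy for $\hat{A}$. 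The same computation applied to $(\hat{f}^{-1},\hat{A}^{-1})$ shows that the non-uniform unstable holonomy produced by Proposition~\ref{existdebil} is also the identity wherever it is defined. Consider the Oseledets $u$-state
$$\hat{m}^u=\int \delta_{\mathbb{P}(E^{u,\hat{A}}_{\hat{x}})}\, d\hat{\mu}(\hat{x}).$$
By Proposition~\ref{discont}, $\hat{m}^u$ is an $su$-state, and since $\hat{A}$ admits the uniform stable holonomy $H^s=\mathrm{Id}$, Proposition~\ref{yanise} yields a continuous disintegration $\{\hat{m}^u_{\hat{x}}\}_{\hat{x}\in\hat{M}}$ of $\hat{m}^u$ which is invariant under both holonomies, that is, under $H^s=\mathrm{Id}$ and $H^u=\mathrm{Id}$.

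The next step is to check that the continuous section $\hat{x}\mapsto\hat{m}^u_{\hat{x}}$ factors through $\hat{x}\mapsto x_0$. Given $\hat{x}$ and $\hat{z}$ with $x_0=z_0$, the point $\hat{y}$ obtained by taking the negative coordinates from $\hat{x}$ and the non-negative coordinates from $\hat{z}$ lies simultaneously in $W^u_{loc}(\hat{x})$ and $W^s_{loc}(\hat{z})$, so holonomy invariance gives $\hat{m}^u_{\hat{x}}=\hat{m}^u_{\hat{y}}=\hat{m}^u_{\hat{z}}$. Moreover essential uniqueness of disintegrations forces $\hat{m}^u_{\hat{x}}=\delta_{\mathbb{P}(E^{u,\hat{A}}_{\hat{x}})}$ for $\hat{\mu}$-almost every $\hat{x}$. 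A standard weak-$^*$ continuity argument (approximate any $\hat{x}$ by $\hat{x}_n$ in the full measure set and observe that a weak-$^*$ limit of Dirac masses is itself a Dirac mass at the limit of the atoms) then upgrades the a.e.\ equality to an everywhere equality $\hat{m}^u_{\hat{x}}=\delta_{v(\hat{x})}$, producing a continuous section $v\colon\hat{M}\to\mathbb{P}^1$ that only depends on $x_0$. Write $v(\hat{x})=v_{x_0}$.

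The invariance $\mathbb{P}(\hat{A}(\hat{x}))_*\hat{m}^u_{\hat{x}}=\hat{m}^u_{\hat{f}(\hat{x})}$ rewrites as
$$\mathbb{P}(A(x_0))\,v_{x_0}=v_{x_1}\qquad\text{for every }\hat{x}\in\hat{M}.$$
Since $\hat{\mu}$ is a fully supported Bernoulli measure on the full shift, every pair $(i,j)\in\{1,\dots,d\}^2$ appears as $(x_0,x_1)$ for some $\hat{x}$ in $\mathrm{supp}(\hat{\mu})=\hat{M}$, so by continuity of $v$ the identity $\mathbb{P}(A(i))\,v_i=v_j$ holds for all $i,j$. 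Fixing $i$ and varying $j$ forces all $v_j$ to coincide with a single $v\in\mathbb{P}^1$, and then $A(i)\,v=v$ for every $i\in\{1,\dots,d\}$; this produces a common invariant line for $\{A(1),\dots,A(d)\}$, contradicting the irreducibility of $\hat{A}$ and completing the proof.

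The main obstacle I expect is the passage from the a.e.\ identification $\hat{m}^u_{\hat{x}}=\delta_{\mathbb{P}(E^{u,\hat{A}}_{\hat{x}})}$ to a genuine continuous Dirac section $v$ defined everywhere on $\hat{M}$: the Oseledets subspaces are only measurable and the non-uniform unstable holonomy is only controlled on holonomy blocks, so one must carefully combine the continuity from Proposition~\ref{yanise} with the full-support of $\hat{\mu}$ and the essential uniqueness of disintegrations. Once this point is handled rigorously, the equivariance step is immediate and irreducibility delivers the contradiction.
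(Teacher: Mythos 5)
Your proposal is correct and follows essentially the same route as the paper: contradiction via Proposition \ref{discont} and Proposition \ref{yanise}, triviality of both holonomies for a locally constant cocycle, and irreducibility ruling out the resulting common invariant line. You are in fact slightly more explicit than the paper at the two points it treats tersely (upgrading the a.e.\ Dirac identification to an everywhere-continuous section, and passing from constancy on each cylinder $[0;i]$ to a single fixed direction via the full-shift transitions), but the argument is the same.
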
 
\textit{Proof.}

Since $\hat{A}$ is a locally constant cocycle, then there exists a function $$A\colon \{1,...,d\}\to SL(2,\mathbb{R})$$ such that $\hat{A}(\hat{x})=A(x_0)$. Observe that this implies that $\hat{A}$ admits uniform invariant holonomies and both of them are the identity. 

Suppose that $\hat{A}$ is a dis\-con\-ti\-nu\-i\-ty point for the Lyapunov exponents, then $$\lambda_{+}(\hat{A})>0>\lambda_{-}(\hat{A})$$ and we can define measures $\hat{m}^s$ and $\hat{m}^u$ as in Equation (\ref{msu}).

Proposition \ref{discont} states that every $\mathbb{P}(\hat{A})$-invariant measure is an $su$-state, in particular, $\hat{m}^u$ is an $su$-state. Since $\hat{A}$ admits uniform invariant holonomies, Proposition \ref{yanise} gives us that $\xi:\hat{\Sigma}\to\mathbb{P}^1$, defined as $\xi(\hat{x})=\text{supp }\hat{m}^u_{\hat{x}}$ is a continuous section invariant by $\mathbb{P}(\hat{A})$ and both holonomies.

Since the uniform invariant holonomies are both the identity, we deduce that $\xi(\hat{x})=E$ where $E\in\mathbb{P}^1$ for every $\hat{x}\in\Sig$. Here is where we use that $\hat{M}$ is a full shift. 

Finally, as $\hat{m}^u$ is a $\mathbb{P}(\hat{A})$-invariant measure, we infer that for $\hat{x}$ almost every point $$\mathbb{P}(\hat{A}(\hat{x}))_*\hat{m}^u_{\hat{x}}=\hat{m}^u_{\hat{f}(\hat{x})}.$$  Moreover, since $\xi$ is continuous, we deduce that $\mathbb{P}(\hat{A}(\hat{x}))E=E$ for every $\hat{x}\in \hat{\Sigma}$. However, if the projectivization of $\hat{A}$ has a fixed point, then $\hat{A}$ has an invariant subspace, which contradicts the hypothesis of irreducibility and concludes the proof of the Theorem B. $\square$ 

\subsection*{Acknowledgments.} The first author would like to thank Marcelo Viana for the guidance and encouragement during her Ph.D. Thesis at IMPA which originates this work. The authors are grateful with Lorenzo J. D\'iaz and Lucas Backes for the useful observations and several corrections to the text. C. F. has been partially supported by IMPA and CAPES.

\end{document}